\newtheorem{theorem}{Theorem}[section]
\newtheorem{lemma}[theorem]{Lemma}
\theoremstyle{definition}
\newtheorem{definition}[theorem]{Definition}
\newtheorem{example}[theorem]{Example}
\theoremstyle{remark}
\newtheorem{remark}[theorem]{Remark}
\numberwithin{equation}{section}
\newcommand\rd{{\mathrm{d}}} 
\newcommand\re{{\mathrm{e}}}
\def\bI{\boldsymbol{I}}
\def\b0{\boldsymbol{0}}
\def\bv{\boldsymbol{v}}
\def\btau{\boldsymbol{\tau}}
\def\bsigma{\boldsymbol{\sigma}}
\def\bepsilon{\boldsymbol{\epsilon}}
\def\R{\mathbb R}
\def\mS{\mathbb S}
\def\mM{\mathbb M}
\def\cA{\mathcal {A}}
\def\cN{\mathcal {N}}
\def\cT{\mathcal{T}}
\def\cP{\mathcal{P}}
\def\cF{\mathcal{F}}
\newcommand{\curl}{{\rm curl}}
\newcommand{\tr}{{\rm tr}}
\newcommand{\supp}{{\rm supp}}
\renewcommand{\div}{{\rm div}}
\begin{document}

% \title[short text for running head]{full title}
\title[Hybridized Mixed methods for Elasticity and Multilevel
Solvers]{New Hybridized Mixed Methods for Linear Elasticity and
Optimal Multilevel Solvers}

%    Only \author and \address are required; other information is
%    optional.  Remove any unused author tags.

%    author one information
% \author[short version for running head]{name for top of paper}
\author[S. Gong]{Shihua Gong}
\address{Beijing International Center for Mathematical Research,
Peking University, Beijing 100871, P. R. China}
\curraddr{}
\email{gongshihua@pku.edu.cn}
\thanks{The work of the first and third authors was supported in part
by National Natural Science Foundation of China (NSFC) (Grant No.
91430215, 41390452) and by Beijing International Center for
Mathematical Research of Peking University, China.}

%    author two information
\author[S. Wu]{Shuonan Wu}
\address{Department of Mathematics, Pennsylvania State University,
University Park, PA, 16802, USA}
\curraddr{}
\email{sxw58@psu.edu}
\thanks{The work of the second and third authors was supported in part
by the DOE Grant DE-SC0009249 as part of the Collaboratory on
Mathematics for Mesoscopic Modeling of Materials and by DOE Grant
DE-SC0014400 and NSF Grant DMS-1522615.}

%    author three information
\author[J. Xu]{Jinchao Xu}
\address{Department of Mathematics, Pennsylvania State University,
University Park, PA 16802, USA}
\curraddr{}
\email{xu@math.psu.edu}
%\thanks{Support information for the third author.}

%    \subjclass is required.
\subjclass[2010]{65N30, 65N55}

%\date{February 20, 2017.}

\dedicatory{}

%    Abstract is required.
\begin{abstract}
In this paper, we present a family of new mixed finite element methods
for linear elasticity for both spatial dimensions $n=2,3$, which
yields a conforming and strongly symmetric approximation for stress.
Applying $\mathcal{P}_{k+1}-\mathcal{P}_k$ as the local
approximation for the stress and displacement, the mixed methods
achieve the optimal order of convergence for both the stress and
displacement when $k \ge n$.  For the lower order case $(n-2\le k<n)$,
the stability and convergence still hold on some special grids. The
proposed mixed methods are efficiently implemented by hybridization,
which imposes the inter-element normal continuity of the stress by a
Lagrange multiplier. Then, we develop and analyze multilevel solvers
for the Schur complement of the hybridized system in the two
dimensional case. Provided that no nearly singular vertex on the
grids, the proposed solvers are proved to be uniformly convergent with
respect to both the grid size and Poisson's ratio. Numerical
experiments are provided to validate our theoretical results.
\end{abstract}

\maketitle

%    Text of article.
\section{Introduction} \label{sec:intro}

The mixed finite element methods are popular in solid mechanics since
they avoid locking and provide a straightforward approximation for
stress. The conforming mixed methods based on the classical
Hellinger-Reissner variational formulation requires finite element
space for the stress in $H(\div;\mS)$, the space of symmetric
matrix-valued fields, which are square integrable with square
integrable divergence. In the meantime, the discrete space for the
stress must be compatible with that for the displacement, which is a
subspace of the vector-valued $L^2$ space. However, the construction of
such stable pairs using polynomial shape functions is very challenging. 

To overcome this difficulty, the earliest works adopted composite
element techniques (cf. \cite{johnson1978some, arnold1984family}). The
composite element methods approximate the displacement in one grid
while approximating the stress in the refined grid.  Due to the
difficulties in keeping the symmetry and conformity at the same time,
some compromised methods that relax one of the two requirements have
been developed. The first category of such methods (cf.
\cite{amara1979equilibrium, morley1989family, qiu2009mixed,
arnold2007mixed, boffi2009reduced, cockburn2010new,
guzman2010unified}) weakly imposes stress symmetry, while
maintaining exact $H(\div)$ conformity.  These methods introduce the
Lagrange multiplier, approximating the non-symmetric part of the
displacement gradient while enforcing stress symmetry weakly. The
second category of such methods (cf. \cite{arnold2003nonconforming,
awanou2009rotated, hu2007lower, man2009lower, yi2005nonconforming,
yi2006new, gong2015lowest}) relaxes the conformity constraints while 
keeping the symmetry strongly. 

In \cite{arnold2002mixed}, Arnold and Winther proposed the first
family of mixed finite element methods in two dimension (2D), which yields the symmetric
and conforming approximation for the stress. Since then, many stable mixed
elements have been constructed, see \cite{arnold2005rectangular,
arnold2008finite, adams2005mixed}.
However,  the shape function spaces of these elements, using
incomplete polynomials, are quite complicated.  In \cite{hu2014family,
hu2015family}, Hu and Zhang constructed a family of mixed finite
elements with conforming and symmetric stress approximation in a
unified fashion on simplex grids for spatial dimension $n=2, 3$. The
degrees of the polynomials to approximate the stress and displacement
match reasonably and naturally, by which these elements also achieve
the optimal order of convergence.  The generalizations or variants of
Hu-Zhang's finite elements can be found in \cite{hu2015finite,
hu2015new, hu2016finite}. 

Both families of the conforming elements above are subject to
continuity constraints at the element vertices, which is not natural
for $H(\div)$ conformity and prohibits techniques like
hybridization that are usually available for the mixed method. 
%Moreover, the continuity on vertices for stress is not physical. 
%A counter example is
%the transverse stretch of a longitudinally composite board, in which
%the transverse tension may be discontinuous on different levels. 
One feature of our methods is to relax the continuity at the element
vertices using the full $C^{\div}$--$\cP_{k+1}$ space for the
stress
$$
\Sigma_{h,k+1} = \{ \btau \in H(\div, \Omega; \mS)~|~ \btau|_{K} \in
\cP_{k+1}(K; \mS) \quad \forall K \in \cT_h\}.
$$
Taking the full $C^{-1}$--$\cP_{k}$ vector-valued space $V_{h,k}$ for
the displacement, the stability of $\Sigma_{h,k+1}-V_{h,k}$ follows
directly from the results of \cite{hu2014family, hu2015family,
hu2015finite} when $k \ge n$. On some special grids, we can still
prove the stability for the lower order pairs when $n-2 \le k < n $. In the
2D case, it is feasible to construct nodal basis functions for
$\Sigma_{h,k+1}$ by geometric analysis at the vertices (cf.
    \cite{morgan1975nodal}).  In the 3D case, however, it is complicated
to deal with nodal basis functions associated with the vertices or
edges. In any case, the dimension of $\Sigma_{h,k+1}$ therefore
depends on the singular vertices (cf.  \cite{morgan1975nodal}) or
singular edges of the grids.

Instead of constructing basis functions for $\Sigma_{h,k+1}$, we
implement it by hybridization (cf. \cite{arnold1985mixed,
cockburn2009unified}).  In other words, we remove the inter-element
continuity of stress and enforce it by the Lagrange multiplier---the
piecewise discontinuous polynomial space of degree $k+1$ defined on
the edges or faces. The stress and displacement can be eliminated
locally in the hybridized mixed system, which results in a linear
system solely for the Lagrange multiplier. The resulting 
multiplier system may have a nontrivial kernel due to the
singular vertices or singular edges on the grids but leads to a unique
solution of the stress and displacement.  Related works on
hybridizable methods for elasticity can be found in
\cite{soon2009hybridizable, gopalakrishnan2011symmetric, qiu2013hdg}.
In \cite{gopalakrishnan2011symmetric}, a family of nonconforming and
hybridizable elements on simplicial grids was developed in both 2D and
3D cases. The hybridizable discontinuous Galerkin (HDG) methods for the linear elasticity were studied in
\cite{soon2009hybridizable, qiu2013hdg}.

%%This method uses
%%polynomials of degree up to $k+1$ to approximate the displacement and
%%polynomials of degree up to $k$ for the stress.

Another feature of our methods is to develop robust iterative solvers
for the Schur complement of the hybridized mixed system in the 2D case,
provided that there is no nearly singular vertex on the grids. The
iterative solvers for the hybridized mixed method for the diffusion
problem were studied in \cite{gopalakrishnan2003schwarz,
gopalakrishnan2009convergent, cockburn2013multigrid, li2016bpx,
li2016analysis}. Although the methodologies in dealing with the
non-nested multilevel finite element spaces and the non-inherited
bilinear forms were discussed in these papers for the diffusion
problem, two essential distinctions exist for the linear elasticity:
(i) some local estimates do not hold on each element, but on the
element patch, and (ii) the condition number of the multiplier system
depends not only on the grid size but also on Poisson's ratio.

To overcome these difficulties, we first establish some local
estimates on the element patches by characterizing the inter-element
jump of piecewise discontinuous symmetric-matrix-valued polynomials
(see Lemma \ref{lm:stability-local-jump} and \ref{lm:stableJ}).
We then propose an equivalent norm to the energy norm associated with
the multiplier system, which indicates that the multiplier system
holds a similar structure with that of the stable discretization
($P_2$--$P_0$) for the elastic primal formulation (cf.
\cite{schoberl1999multigrid}). Thus, capturing the rigid-body
motion mode and the weak divergence-free mode simultaneously is the
key to developing robust iterative solvers with respect to both the
grid size and Poisson's ratio.
  
The rest of the paper is organized as follows. In the next section, we
introduce our mixed finite element methods and prove their stability
and convergence. In Section \ref{sec:hybrid}, we present the
hybridization of the mixed finite element method.  We also
characterize the kernel of the hybridized mixed system and develop
some tools to estimate the norms. In Section \ref{sec:schwarz}, we
focus on the iterative solvers for the multiplier system.  We provide
some numerical results in Section \ref{sec:numerical} and give some
concluding remarks in Section \ref{sec:concluding}.  Finally, some
technical results can be found in the appendix.
%% end of file %%

\section{Mixed Methods}
\label{sec:mixed}

In this paper, we consider the following linear elasticity problem
with Dirichlet boundary condition  
\begin{equation} \label{equ:elasticity}
\left\{
\begin{aligned}
\cA \bsigma - \bepsilon(u) &= 0 \quad \text{in~}\Omega, \\
\div \bsigma &= f \quad \text{in~} \Omega, \\
u &= 0 \quad \text{on~} \partial\Omega, 
\end{aligned}
\right.
\end{equation}
where $\Omega$ is a polygonal domain in $\R^n ~(n=2, 3)$. The
displacement and stress are denoted by $u: \Omega \mapsto \R^n$ and
$\bsigma: \Omega \mapsto \mS$, respectively. Here, $\mS$ represents
the space of real symmetric matrices of order $n \times n$. The
compliance tensor $\cA: \mS \mapsto \mS$ is defined as
\begin{equation} \label{eq:compliance}
\mathcal{A} \bsigma := \frac{1}{2\tilde{\mu}} \left(\bsigma -
    \frac{\tilde{\lambda}}{2\tilde{\mu} +
n \tilde{\lambda}} \tr(\bsigma)\bI \right),
\end{equation}
where $\tilde{\mu}, \tilde{\lambda} $ are the Lam\'{e} constants.
Clearly, $\mathcal{A}$ is bounded and symmetric positive definite.
The linearized strain tensor is denoted by $\bepsilon(u) = (\nabla u +
(\nabla u)^T)/2$.

\subsection{Preliminaries}
Let $\cT_h$ be a family of quasi-uniform triangulations (cf.
\cite{brenner2007mathematical}) of $\Omega$.  Let $h_K$ be the
diameter of element $K \in \cT_h$, and $h = \max_K h_K$ be the grid
diameter of $\cT_h$.  For any $K \in \cT_h$, the set of all elements
that share  vertex with $K$ is denoted by $\omega_K$.  The sets of all
faces and nodes of $\cT_h$ are denoted by $\cF_h$ and $\cN_h$,
respectively.  Moreover, $\cF_h$ can be divided into two subsets: the
boundary faces set $\cF_h^{\partial} = \cF_h \cap \partial\Omega$ and
the interior faces set $\cF_h^i = \cF_h \setminus \cF_h^{\partial}$.
The unit normal vector with respect to the face $F$ is represented by
$\nu_F$.

Let $F \in \cF_h^i$ be the common face of two elements $K^+$ and
$K^-$, and $\nu_F^+$ and $\nu_F^-$ be the unit outward normal vectors
on $F$ with respect to $K^+$ and $K^-$, respectively. Then, we define
the jump $[\cdot]$ on $F \in \cF_h^i$ for $\btau$ by: 
\begin{equation*} \label{equ:jump}
[\btau]_F := \btau_{K^+} \nu_F^+ + \btau_{K^-} \nu_F^-.
\end{equation*}
For $F\in \cF_h^{\partial }$, we define $[\btau]_F := \btau \nu$,
where $\nu$ is the unit outer normal along $\partial \Omega$.

Our notation for the inner products is standard (cf.
\cite{brenner2007mathematical}): For $u,v \in L^2(D)$, we
write $(u,v)_D=\int_D uv ~\rd x$ if $D$ is a subdomain of $\R^n$, and
$\langle u, v \rangle_D = \int_D uv ~\rd s$ if $D$ is a subdomain of
$\R^{n-1}$. We neglect the subscript $D$ if $D=\Omega$.  To emphasize
the mesh-dependent nature of certain integrals, for $\widetilde{\cT}_h
\subset \cT_h$ and $\widetilde{\cF}_h \subset \cF_h$, we define  
$$
(u_h, v_h)_{\widetilde{\cT}_h} := \sum_{K\in \widetilde{\cT}_h}(u_h,
v_h)_K \quad \text{and} \quad \langle \lambda_h, \mu_h
\rangle_{\widetilde{\cF}_h} := \sum_{F \in \widetilde{\cF}_h} \langle
\lambda_h, \mu_h \rangle_{F}, 
$$
where $u_h, v_h$ and $\mu_h, \lambda_h$ are defined on
$\widetilde{\cT}_h$ and $\widetilde{\cF}_h$, respectively.

Throughout this paper, we shall use letter $C$ to denote a generic
positive constant independent of $h$ and the material parameters. Note
that $C$ may stand for different values at its various occurrences.
The notation $x \lesssim y$ means $x \leq Cy$ and $x \simeq y$ means
$x \lesssim y \lesssim x$.

The mixed formulation of \eqref{equ:elasticity} is to find $(\bsigma,
u) \in \Sigma \times V := H(\div, \Omega; \mS) \times L^2(\Omega;
\R^n)$ such that 
\begin{equation} \label{equ:mixed-formulation}
\left\{
\begin{aligned}
&(\mathcal{A} \bsigma, \btau) + (\div \btau, u) &=&~ 0 \quad
&\forall \btau \in \Sigma, \\
&(\div \bsigma, v) &=&~ (f, v) \quad &\forall v \in V.
\end{aligned}
\right.
\end{equation}
Here, $H(\div, \Omega; \mS)$ consists of square-integrable symmetric
matrix fields with square-integrable divergence, and $L^2(\Omega;
\R^n)$ is the space of vector-valued functions that are
square integrable with the standard $L^2$ norm.  The corresponding
$H(\div)$ norm is defined by 
\[
\|\btau\|_{H(\div)}^2 := \|\btau\|_{0}^2 + \|\div \btau\|_{0}^2 \qquad
\forall \btau \in H(\div, \Omega; \mS).
\]

We take the discrete stress space as the full $C^{\div}$--$\cP_{k+1}$
space
\begin{equation}\label{eq:def-stress-space}
\Sigma_{h,k+1} := \{ \btau \in H(\div, \Omega; \mS)~|~ \btau|_{K} \in
\cP_{k+1}(K; \mS)\quad \forall K \in \cT_h\},
\end{equation}
and take the discrete displacement space as the full
$C^{-1}$--$\cP_{k}$ space
\begin{equation} \label{eq:displacement-space}
V_{h,k} := \{v\in L^2(\Omega;\R^n)~|~v|_{K}\in \cP_{k}(K,\R^n) \quad
\forall K \in \cT_h\}.
\end{equation}
Then, the mixed finite element approximation of the elastic problem
\eqref{equ:mixed-formulation} reads: Find $(\bsigma_h, u_h )\in
\Sigma_{h,k+1} \times V _{h,k}$ such that
\begin{equation}\label{equ:discrete:formulation}\left\{
\begin{aligned}
&(\cA \bsigma_h, \btau_h) + (u_h, \div \btau_h) &=&~ 0 \quad &\forall
\btau_h \in \Sigma_{h,k+1},\\
&( \div \bsigma_h,  v_h)  &=&~ (f, v_h) \quad &\forall v_h \in V
_{h,k}. ~\quad\\
\end{aligned}\right.
\end{equation}

\subsection{Stability and Convergence}

The convergence of the finite element solution follows from the
stability and the standard approximation property. First, we consider
the stability of the discrete problem
\eqref{equ:discrete:formulation}, which follows from two conditions by
the standard theory of mixed finite element methods (cf.
\cite{brezzi2012mixed}).
\begin{enumerate}
\item K-ellipticity: There exists a constant $\alpha > 0$, independent
of the grid size, such that 
\begin{equation}\label{eq:K-ellipticity}
(\mathcal{A} \btau_h, \btau_h) \ge \alpha \|\btau_h\|^2_{H(\div)}
\qquad \forall \btau_h \in Z_h,
\end{equation}
where $Z_h := \{\btau_h \in \Sigma_{h,k+1}~ |~ (\div \btau_h, v_h)=0
\quad \forall v_h \in V_{h,k}\}= \{\btau_h \in \Sigma_{h,k+1}~ |~ \div
\btau_h=0\}$.
\item Lady\v zenskaja-Babu\v ska-Brezzi (LBB) condition: There exists a constant
$\beta > 0$, independent of the grid size, such that
\begin{equation}\label{eq:infsup}
\inf_{v_h \in V_h}\sup_{\btau_h \in \Sigma_{h,k+1}} \frac{(\div
\btau_h, v_h)} { \|\btau_h\|_{H(\div)} \|v_h\|_{0} } \ge \beta.
 \end{equation}
\end{enumerate}

Since $\div \Sigma_{h,k+1}\subset V _{h,k}$ for any $k\ge 0$, we know
that $Z_h \in \mathrm{ker}(\div)$. Therefore, 
\begin{equation} \label{eq:K-ellipticity-elasticity}
(\cA \btau_h, \btau_h) \ge C\|\btau_h\|^2_{0} =
C\|\btau_h\|^2_{H(\div)} \qquad\forall \btau_h \in Z_h,
\end{equation}
as the compliance tensor is positive definite. This implies
the K-ellipticity. Note that, pertaining to $\int_{\Omega}
\tr(\btau_h)~\rd x = 0$, the constant $C$ in
\eqref{eq:K-ellipticity-elasticity} is uniform with respect to the
Poisson's' ratio ($\tilde{\nu} :=
\frac{\tilde{\lambda}}{2(\tilde{\lambda} + \tilde{\mu})}$) due
to the following theorem (see Section 9 in \cite{brezzi2012mixed} for
details).

\begin{theorem} \label{thm:incompressible}
Assume that $\bsigma \in H(\div, \Omega; \mS)$ satisfies $\int_\Omega
\tr(\bsigma) = 0 $ and $\div \bsigma = 0$. It holds that
\begin{equation}\label{eq:incompressible}
\|\bsigma\|_{0}^2 \lesssim  2\tilde{\mu} (\mathcal{A} \bsigma,\bsigma).
\end{equation}
\end{theorem}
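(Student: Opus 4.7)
The plan is to exploit the fact that the compliance tensor acts trivially on the deviatoric (trace-free) part, and to use the two hypotheses ($\div\bsigma = 0$ and $\int_\Omega\tr(\bsigma)=0$) to bound the spherical part by the deviatoric part in $L^2$, uniformly in $\tilde\lambda$.

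First I would decompose $\bsigma = \bsigma^{\rm dev} + \tfrac{1}{n}\tr(\bsigma)\bI$, so that $\|\bsigma\|_0^2 = \|\bsigma^{\rm dev}\|_0^2 + \tfrac{1}{n}\|\tr(\bsigma)\|_0^2$, and plug into \eqref{eq:compliance} to compute
\begin{equation*}
2\tilde\mu(\cA\bsigma,\bsigma)
= \|\bsigma\|_0^2 - \frac{\tilde\lambda}{2\tilde\mu + n\tilde\lambda}\|\tr(\bsigma)\|_0^2
= \|\bsigma^{\rm dev}\|_0^2 + \frac{2\tilde\mu}{n(2\tilde\mu + n\tilde\lambda)}\|\tr(\bsigma)\|_0^2.
\end{equation*}
In particular, $2\tilde\mu(\cA\bsigma,\bsigma) \ge \|\bsigma^{\rm dev}\|_0^2$ with a constant independent of the Lam\'e parameters, so it suffices to prove the reverse bound $\|\bsigma\|_0^2 \lesssim \|\bsigma^{\rm dev}\|_0^2$, i.e.\ $\|\tr(\bsigma)\|_0 \lesssim \|\bsigma^{\rm dev}\|_0$.

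To do this, I would invoke the classical Bogovski\u\i{}/Stokes inf-sup result on the polygonal domain $\Omega$: since $\tr(\bsigma) \in L^2_0(\Omega)$ by assumption, there exists $\bv \in H_0^1(\Omega;\R^n)$ with $\div\bv = \tr(\bsigma)$ and $\|\bv\|_1 \lesssim \|\tr(\bsigma)\|_0$. Then integration by parts, together with the symmetry of $\bsigma$ and $\div\bsigma = 0$ with $\bv|_{\partial\Omega} = 0$, gives
\begin{equation*}
0 = -(\div\bsigma,\bv) = (\bsigma,\nabla\bv) = (\bsigma,\bepsilon(\bv))
= (\bsigma^{\rm dev},\bepsilon^{\rm dev}(\bv)) + \tfrac{1}{n}(\tr(\bsigma),\div\bv),
\end{equation*}
using that the trace/traceless decomposition is orthogonal in the Frobenius inner product. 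Since $\div\bv = \tr(\bsigma)$, the last term equals $\tfrac{1}{n}\|\tr(\bsigma)\|_0^2$, and by Cauchy--Schwarz the first term is bounded by $\|\bsigma^{\rm dev}\|_0\,\|\bv\|_1 \lesssim \|\bsigma^{\rm dev}\|_0\,\|\tr(\bsigma)\|_0$. Cancelling one factor of $\|\tr(\bsigma)\|_0$ yields the desired inequality, and combining with the earlier identity finishes the proof.

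The main obstacle is really just invoking the Stokes inf-sup (equivalently, the surjectivity of $\div: H_0^1(\Omega;\R^n) \to L^2_0(\Omega)$), which is what brings the geometry of $\Omega$ into the hidden constant; everything else is an algebraic manipulation with the compliance tensor. Notice that the divergence-free and zero-mean assumptions are both indispensable, the former for the integration by parts and the latter so that $\tr(\bsigma)$ lies in the range of the divergence on $H_0^1$.
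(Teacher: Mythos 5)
Your proof is correct and complete: the algebraic identity for $2\tilde\mu(\mathcal{A}\bsigma,\bsigma)$, the reduction to $\|\tr(\bsigma)\|_0\lesssim\|\bsigma^{\rm dev}\|_0$, and the use of the surjectivity of $\div:H_0^1(\Omega;\R^n)\to L^2_0(\Omega)$ together with integration by parts are all sound, and both hypotheses are used exactly where you say they are. The paper does not prove this theorem itself but defers to Section 9 of the cited reference, and your argument is precisely the standard one given there, so there is nothing to reconcile.
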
 

Next, we discuss the inf-sup condition under the pure displacement
boundary condition. Similar techniques work for the traction boundary
condition.

\begin{lemma}\label{lm:infsup-high}
When $k\ge n$, for any $v_h\in V_{h,k}$, there exists $\btau_h \in
\Sigma_{h,k+1}$ such that 
\begin{equation}\label{eq:infsup-high}
\div \btau_h = v_h \quad \text{and} \quad \|\btau_h\|_{H(\div)}
\lesssim \|v_h\|_{0}.
\end{equation}
\end{lemma}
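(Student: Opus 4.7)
The plan is to reduce \eqref{eq:infsup-high} to the Hu--Zhang construction of conforming symmetric mixed elements. Their space $\Sigma_{h,k+1}^{HZ} \subset H(\div,\Omega;\mS)$ consists of piecewise $\cP_{k+1}$ symmetric-matrix fields with extra vertex (and, in 3D, edge) continuity, and they show that when $k \ge n$ the divergence map $\div:\Sigma_{h,k+1}^{HZ}\to V_{h,k}$ is surjective with a bounded right inverse. Since $\Sigma_{h,k+1}^{HZ}$ is a subspace of the $\Sigma_{h,k+1}$ defined in \eqref{eq:def-stress-space}, any Hu--Zhang witness $\btau_h^{HZ}$ satisfies $\div \btau_h^{HZ} = v_h$ together with the stability bound $\|\btau_h^{HZ}\|_{H(\div)}\lesssim \|v_h\|_0$, and it automatically lies in $\Sigma_{h,k+1}$. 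So the inf-sup in the larger space follows for free once the Hu--Zhang result is cited.

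To make the argument more self-contained, I would follow the two-step Hu--Zhang strategy. First, use the continuous LBB for the Hellinger--Reissner formulation to obtain $\widetilde{\btau}\in H^1(\Omega;\mS)$ with $\div\widetilde{\btau}=v_h$ and $\|\widetilde{\btau}\|_1\lesssim \|v_h\|_0$. A Scott--Zhang-type componentwise quasi-interpolant into the continuous Lagrange piece $\Sigma_{h,k+1,c}\subset \Sigma_{h,k+1}^{HZ}$ then yields a symmetric $H^1$-conforming approximation $\Pi_h\widetilde{\btau}$ with $\|\Pi_h\widetilde{\btau}\|_{H(\div)}\lesssim \|v_h\|_0$ and a residual $r_h := v_h - \div(\Pi_h\widetilde{\btau}) \in V_{h,k}$ whose $L^2$ size is controlled by $\|v_h\|_0$. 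Second, correct $r_h$ element by element using the normal-trace-free bubble part $\Sigma_{h,k+1,b}(K)$ of $\Sigma_{h,k+1}^{HZ}$: one shows that $\div$ maps $\Sigma_{h,k+1,b}(K)$ onto the orthogonal complement of the rigid body motions inside $\cP_k(K;\R^n)$, and then the rigid-motion moments of $r_h|_K$ can be absorbed into the continuous correction by a standard macroelement adjustment. Summing the local corrections produces $\btau_b \in \Sigma_{h,k+1,b}\subset\Sigma_{h,k+1}$ with $\div\btau_b = r_h$ and $\|\btau_b\|_{H(\div)}\lesssim \|r_h\|_0$; setting $\btau_h := \Pi_h\widetilde{\btau}+\btau_b$ finishes the proof.

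The main obstacle is the local bubble surjectivity in the second step. Because the symmetry constraint couples the $n(n+1)/2$ independent scalar components, one cannot simply multiply a vector-valued polynomial by the element bubble and take the divergence; one must exhibit enough symmetric bubble tensors whose divergences span the target and then verify the dimension count. This is precisely the point at which the restriction $k\ge n$ enters: in the regime $n-2\le k<n$ the bubble space is too small on generic meshes, which is why the paper subsequently restricts to special grids in the lower-order case.
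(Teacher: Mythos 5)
Your first paragraph is exactly the paper's proof: it cites the Hu--Zhang construction \cite{hu2014family, hu2015family, hu2015finite} and observes that the witness lies in $\Sigma_{h,k+1}^{\rm HZ}\subset\Sigma_{h,k+1}$, so the bound transfers verbatim. The remaining two paragraphs merely unpack the internals of the cited Hu--Zhang argument (continuous LBB, Scott--Zhang interpolation, local $\div$-bubble correction), which the paper leaves to the references, so the proposal is correct and takes essentially the same route.
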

\begin{proof}
This is a corollary of \cite{hu2015finite, hu2014family,
hu2015family}, in which a family of finite elements for $H(\div,
\Omega; \mS)$ satisfying \eqref{eq:infsup-high}  is proposed as
\begin{equation} \label{eq:Hu-Zhang}
\begin{aligned}
{\Sigma}_{h,k+1}^{\rm HZ} := \{ \btau & \in H(\div,\Omega;\mS)~|~ \btau
= \btau_c+\btau_b, ~\btau_c\in H^1(\Omega;\mS), \\ &\btau_c|_K \in
\cP_{k+1}(K;\mS),~ \btau_b|_K \in \Sigma_{k+1,b}(K)\quad  \forall K \in
\cT_h\}.
\end{aligned}
\end{equation}
Here, the local conforming $\div$-bubble space $\Sigma_{k+1,b}(K)
:=\{\btau \in \cP_{k+1}(K;\mS)~|~\btau\nu |_{\partial K} =0 \}$.
Hence, the lemma follows from the fact that $\btau_h \in
{\Sigma}^{\rm HZ}_{h,k+1} \subset \Sigma_{h,k+1}$.
\end{proof}

For the lower order case, the inf-sup condition \eqref{eq:infsup-high}
resorts to some known results of the Stokes pair.
When $k\ge n-2$, the Stokes pair $\cP_{k+2}-\cP_{k+1}^{-1}$ can be
proved stable on special grids (cf. \cite{arnold1992quadratic,
zhang2005new}), a popular example of which is the Hsieh-Clough-Tocher (HCT) grid, where each
macro-simplex is divided into $n+1$ sub-simplexes by connecting the
barycenter with the vertices.

\begin{lemma}\label{lm:infsup-low}
When $n-2 \le k < n$, if the Stokes pair
$\cP_{k+2}-\cP_{k+1}^{-1}$ is stable on the grid, then for any $v_h\in
V_{h,k}$, there exists $\btau_h\in \Sigma_{h,k+1}$ such that 
\begin{equation}
\div \btau_h = v_h \quad \text{and} \quad \|\btau_h\|_{H(\div)}
\lesssim \|v_h\|_{0}.
\end{equation}
\end{lemma}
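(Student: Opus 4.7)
The plan is to mirror the strategy of Lemma \ref{lm:infsup-high}, replacing its direct appeal to the Hu-Zhang construction (valid only when $k \ge n$) with a two-part construction that exploits the assumed stable Stokes pair. I would decompose the target stress as $\btau_h = \btau_h^c + \btau_h^b$, where $\btau_h^c$ lies in an $H^1$-conforming, piecewise-$\cP_{k+1}$, symmetric-matrix-valued subspace of $\Sigma_{h,k+1}$ and carries the ``rigid-motion'' component of $v_h$, while $\btau_h^b \in \bigoplus_K \Sigma_{k+1,b}(K)$ is an elementwise bubble correction with $\Sigma_{k+1,b}(K)$ as defined in the proof of Lemma \ref{lm:infsup-high}.

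For the continuous piece $\btau_h^c$, I would first use regularity of the continuous elasticity operator to lift $v_h$ to $\btau \in H^1(\Omega;\mS)$ with $\div \btau = v_h$ and $\|\btau\|_1 \lesssim \|v_h\|_0$. I would then define $\btau_h^c$ as a continuous projection of $\btau$ into a $C^0$-$\cP_{k+1}$ symmetric-matrix subspace of $\Sigma_{h,k+1}$, designed so that the residual $r_h := v_h - \div \btau_h^c$ is $L^2$-orthogonal to piecewise rigid motions on each element. This is where the Stokes pair hypothesis enters: by a Fortin-type duality argument, the inf-sup stability of $\cP_{k+2}$-$\cP_{k+1}^{-1}$ on the given grid is equivalent to the surjectivity of $\div$ from our chosen $C^0$ symmetric-tensor space onto $V_{h,k}/\mathrm{RM}_h$, where $\mathrm{RM}_h$ denotes elementwise rigid motions. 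With this construction, $r_h|_K$ lies in $\mathrm{RM}(K)^\perp \cap \cP_k(K;\R^n)$, which on the special grid equals $\div \Sigma_{k+1,b}(K)$; a local elementwise solve then yields $\btau_h^b$ with $\|\btau_h^b\|_0 \lesssim \|r_h\|_0 \lesssim \|v_h\|_0$ by scaling. The sum $\btau_h = \btau_h^c + \btau_h^b$ then lies in $\Sigma_{h,k+1}$, satisfies $\div \btau_h = v_h$, and meets the stated norm bound.

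The hard part is the construction of $\btau_h^c$: one must translate the scalar Stokes inf-sup (vector velocity paired with scalar pressure) into the matrix-divergence surjectivity needed here (symmetric-matrix velocity paired with vector pressure). A naive row-by-row construction applying scalar Stokes stability to each component of a matrix field breaks symmetry after averaging. The clean route is either to invoke a discrete de Rham-type complex that couples Stokes and mixed elasticity at the discrete level (the Airy stress function viewpoint in 2D), or to argue on a macro-element such as the Hsieh-Clough-Tocher refinement, where the sub-triangulation enlarges the interior bubble space enough to make the reduction to scalar Stokes transparent. Once this equivalence is in place, the remaining lifts, interpolations, and scaling arguments are standard.
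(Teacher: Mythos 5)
Your proposal has a genuine gap at its center, and the surrounding structure inherits a second problem. The central claim---that the inf-sup stability of the Stokes pair $\cP_{k+2}$--$\cP_{k+1}^{-1}$ is ``equivalent to the surjectivity of $\div$ from our chosen $C^0$ symmetric-tensor space onto $V_{h,k}/\mathrm{RM}_h$''---is essentially the content of the lemma in disguise, and you leave it unproved: the two routes you sketch (a discrete complex coupling Stokes to elasticity, or a macro-element argument) are names for the difficulty, not arguments. There is no reason to expect a $C^0$, symmetric, piecewise-$\cP_{k+1}$ tensor space to map onto $V_{h,k}$ modulo elementwise rigid motions when $k<n$; the failure of exactly this kind of surjectivity is why the construction of Lemma~\ref{lm:infsup-high} is restricted to $k\ge n$. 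Your bubble step compounds this: you need $\mathrm{RM}(K)^\perp\cap\cP_k(K;\R^n)=\div\,\Sigma_{k+1,b}(K)$, but this identity is available only in the high-order regime $k\ge n$ (it is the local engine of the Hu--Zhang element), and the qualifier ``on the special grid'' cannot rescue it because $\Sigma_{k+1,b}(K)$ is defined on a single simplex $K$ and is blind to any macro-element structure. So the decomposition $\btau_h=\btau_h^c+\btau_h^b$ you propose is essentially the Hu--Zhang decomposition, which is exactly what the low-order case must avoid.

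The paper uses the Stokes hypothesis in a different and genuinely workable way. It first takes a \emph{non-symmetric} lift: the matrix-valued BDM space gives $\tilde{\btau}_h$ with $\div\tilde{\btau}_h=v_h$ and $\|\tilde{\btau}_h\|_{H(\div)}\lesssim\|v_h\|_0$ for every $k\ge 0$. It then restores symmetry by adding a divergence-free correction $\curl\rho_h$ with $\rho_h$ continuous piecewise $\cP_{k+2}$; the algebraic identity $\mathrm{skw}(\curl\rho_h)=\tfrac12\mathrm{Skw}_2(\div\rho_h)$ (and its 3D analogue via the operator $\Xi$) converts the symmetry requirement into a divergence equation $\div\rho_h=\tilde{\tau}_{h,21}-\tilde{\tau}_{h,12}$, which is solvable with the bound $\|\rho_h\|_1\lesssim\|v_h\|_0$ precisely by the assumed Stokes inf-sup condition. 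Since $\curl\rho_h$ is $H(\div)$-conforming, divergence-free, and piecewise $\cP_{k+1}$, the sum lands in $\Sigma_{h,k+1}$ with $\div\btau_h=v_h$ unchanged. If you want to salvage your outline, this is the step to replace: use the Stokes pair to kill the skew-symmetric part of a non-symmetric lift, rather than to build a symmetric $C^0$ lift directly.
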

\begin{proof}
We prove the stability by a constructive method (cf.
\cite{arnold2007mixed}). 
%which uses  known discretizations of the de Rham complex. 
In light of the Brezzi-Douglas-Marini (BDM) elements for $H(\div;\R^n)$ (cf.
\cite{brezzi1985two, brezzi2012mixed}), we defined the following space
\begin{equation*}
{\rm BDM}_{k+1}^{n\times n} := \{\btau \in H(\div,\Omega;\mM)~|~
\btau|_K\in \cP_{k+1}(K;\mM) \quad \forall K \in \mathcal{T}_h\},
\end{equation*}
where $\mM$ represents the space of real matrices of order $n\times
n$. The $\div \btau$ here is defined by taking $\div$ on each row of
$\btau$. By the stability of BDM elements, we immediately
know that for any $v_h \in V_h$, there exists a $\tilde{\btau}_{h}\in {\rm
BDM}^{n\times n}_{k+1}$ such that 
\begin{equation*} \label{eq:infsup_bdm}
\div \tilde{\btau}_{h} = v_h \quad \text{and} \quad
\|\tilde{\btau}_{h}\|_{H(\div)} \lesssim \|v_h\|_{0}.
\end{equation*}

With the purpose of symmetrizing $\btau_{h}$, we add a
divergence-free term to $\tilde{\btau}_{h}$ to obtain 
$$ 
\btau_h = \tilde{\btau}_{h} + \curl \rho_h, 
$$ 
where $\rho_h$ satisfies
\begin{enumerate}
\item For $n=2$: $\rho_h \in H^1(\Omega;\R^2)$ is a vector-valued
function and $\rho_h|_K\in \cP_{k+2}(K;\R^2) $;
\item For $n=3$: $\rho_h \in H^1(\Omega; \mM)$ is a matrix-valued
function and $\rho_h|_K\in \cP_{k+2}(K;\mM) $.
\end{enumerate}
For the 2D case, the $\curl $ operator is a rotation of the 
operator $\nabla$ (i.e., $\curl =(- \partial_y, \partial_x)$) and applies on
each entry of the vector $\rho_h$. For the 3D case, the $\curl$ operator
applies on each row of the matrix $\rho_h$. By direct calculation,
the symmetry of $\btau_h$ is equivalent to the following equation, 
\begin{equation}\label{eq:infsup-low1}
{\rm skw}(\curl \rho_h) = - {\rm skw}\tilde{\btau}_h,
\end{equation}
where ${\rm skw}\btau := (\btau-\btau^T)/2 $.  For a scalar function
$v$ and a vector-valued function $v=(v_1,v_2,v_3)^T$, we further
define 
\[
\mathrm{Skw}_2(v) := \begin{bmatrix} 0 & v\\ -v &0\end{bmatrix} \quad
\text{and} \quad
\mathrm{Skw}_3 (v) := \begin{bmatrix} 0 & v_3 & - v_2 \\ -v_3 & 0 &
v_1 \\ v_2 & -v_1 & 0\\\end{bmatrix}.
\]
Then, the proof can be divided into the following two cases: 
\begin{enumerate}
\item For $n=2$: From \cite{arnold2006finite}, we have $\mathrm{skw}
(\curl \rho_h) = \frac{1}{2} \mathrm{Skw_2}(\div \rho_h)$. Thus,
\eqref{eq:infsup-low1} can be written as:
\begin{equation}\label{eq:infsup-low2D}
 \div \rho_h = \tilde{\tau}_{h,21} - \tilde{\tau}_{h,12}.
\end{equation}
The stability of Stokes pair $\cP_{k+2}-\cP_{k+1}^{-1}$ then
implies that there exists a $\rho_h \in \{ v\in
H^1(\Omega;\mathbb{R}^2)~|~ v|_K \in \cP_{k+2}(K;\mathbb{R}^2)\}$
satisfying \eqref{eq:infsup-low2D} and 
$$
\|\rho_h\|_1 \lesssim \|\tilde{\tau}_{h,21} -
\tilde{\tau}_{h,12}\|_{0} \leq \|\tilde{\btau}_h\|_0 \lesssim \|v_h\|_0.
$$

\item For $n=3$: From \cite{arnold2006finite}, we have  
$\mathrm{skw}(\curl \rho_h) = -\frac{1}{2}\mathrm{Skw_3 }(\div ~ \Xi
\rho_h)$, where $\Xi$ is an algebraic operator defined as ${\Xi}\rho_h
= \rho_h^T - \tr(\rho_h) \bI$. Denoting $\eta_h = \Xi \rho_h$, it is
obvious that $\rho_h = \Xi^{-1} \eta_h =  \eta_h^T -
\frac{1}{2}\tr(\eta_h)\bI$. Thus, \eqref{eq:infsup-low1} can be
written as:  
\begin{equation}\label{eq:infsup-low3D}
\div \eta_h = (\tilde{\tau}_{h,23}-\tilde{\tau}_{h,32},
\tilde{\tau}_{h,31}- \tilde{\tau}_{h,13},
\tilde{\tau}_{h,12}- \tilde{\tau}_{h,21})^{T}.
\end{equation}
Again, there exists a $\eta_h \in  \{\btau\in
H^1(\Omega;\mathbb{M})~|~\btau|_K
\in \cP_{k+2}(K;\mathbb{M})\}$ satisfying \eqref{eq:infsup-low3D} and 
\begin{equation*}
\|\rho_h\|_1 \lesssim \|\eta_h\|_1 \lesssim \|\tilde{\btau}_h\|_{0} \lesssim
\|v_h\|_0.
\end{equation*}
\end{enumerate}
To summarize, we obtain $\btau_h = \tilde{\btau}_h +\curl \rho_h $ that
satisfying $\btau_h \in \Sigma_{h,k+1}$, 
\begin{equation*}
\div\btau_h = v_h \quad \text{and} \quad \|\btau_h\|_{H(\div)}
\lesssim \|\tilde{\btau}_h\|_{H(\div)} + \|\curl \rho_h\|_0
\lesssim\|v_h\|_{0}.
\end{equation*} 
 This completes the proof.
\end{proof}

By virtue of Lemma \ref{lm:infsup-high} and \ref{lm:infsup-low}, we
have the following theorems.
   
\begin{theorem}\label{thm:wellposed}
Under the conditions in Lemma \ref{lm:infsup-high} or
\ref{lm:infsup-low}, the K-ellipticity \eqref{eq:K-ellipticity} and
the inf-sup condition \eqref{eq:infsup} hold uniformly with respect to
the mesh size. Consequently, the discrete mixed problem
\eqref{equ:discrete:formulation} is well posed.
\end{theorem}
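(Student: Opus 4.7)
The plan is to verify the two Brezzi conditions separately and then invoke the standard mixed finite element theory (e.g. Brezzi--Fortin) to conclude well-posedness. Both pieces are essentially already present in the surrounding discussion; the theorem really just packages them.

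First I would establish K-ellipticity by reusing the observation made right before the theorem statement: since $V_{h,k}$ contains $\div \Sigma_{h,k+1}$ (as the latter is piecewise $\cP_k$ on each element for the tensor-valued space of degree $k+1$), the kernel $Z_h$ equals $\{\btau_h \in \Sigma_{h,k+1}: \div \btau_h = 0\}$. Hence any $\btau_h \in Z_h$ satisfies $\|\btau_h\|_{H(\div)}^2 = \|\btau_h\|_0^2$, and positive definiteness of $\cA$ gives $(\cA\btau_h,\btau_h) \gtrsim \|\btau_h\|_0^2$. To see that this constant is independent of Poisson's ratio, I would remark that although a generic $\btau_h \in Z_h$ need not satisfy $\int_\Omega \tr(\btau_h) = 0$, one may decompose $\btau_h = \btau_h^0 + c\bI$ with $\int_\Omega \tr(\btau_h^0)=0$ and apply Theorem \ref{thm:incompressible} to $\btau_h^0$, handling the constant-trace part separately by a direct computation from \eqref{eq:compliance}. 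This yields \eqref{eq:K-ellipticity} with a constant $\alpha$ independent of $h$ and of $\tilde{\lambda}$.

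Next I would verify the LBB condition \eqref{eq:infsup}. This is immediate from Lemma \ref{lm:infsup-high} in the regime $k \ge n$, and from Lemma \ref{lm:infsup-low} in the lower-order regime $n-2 \le k < n$ on the special grids assumed there. In either case, given $v_h \in V_{h,k}$, we obtain $\btau_h \in \Sigma_{h,k+1}$ with $\div \btau_h = v_h$ and $\|\btau_h\|_{H(\div)} \lesssim \|v_h\|_0$. Plugging this specific $\btau_h$ into the supremum of \eqref{eq:infsup} gives
\begin{equation*}
\sup_{\btau_h \in \Sigma_{h,k+1}} \frac{(\div \btau_h, v_h)}{\|\btau_h\|_{H(\div)}\|v_h\|_0} \ge \frac{(v_h,v_h)}{\|\btau_h\|_{H(\div)}\|v_h\|_0} \gtrsim 1,
\end{equation*}
so $\beta$ is bounded below uniformly in $h$.

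Finally, I would invoke the Brezzi theorem for mixed problems: K-ellipticity on the discrete kernel together with a uniform LBB condition for a saddle-point system with continuous and bounded bilinear forms implies that \eqref{equ:discrete:formulation} is well posed, with a stability constant depending only on $\alpha$, $\beta$, and the continuity constants of $\cA(\cdot,\cdot)$ and $(\div\cdot,\cdot)$. Since none of these depend on $h$ or on $\tilde{\lambda}$, well-posedness is uniform.

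I do not anticipate a true obstacle here, as this theorem is a straightforward corollary; the only subtle point is tracking Poisson-ratio independence in K-ellipticity, which requires the trace-splitting argument above rather than a direct application of Theorem \ref{thm:incompressible}.
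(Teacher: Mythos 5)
Your proposal is correct and follows essentially the same route as the paper: the paper likewise obtains K-ellipticity from $Z_h\subset\ker(\div)$ together with the positive definiteness of $\cA$ (the displayed estimate \eqref{eq:K-ellipticity-elasticity}), obtains the inf-sup condition directly from Lemmas \ref{lm:infsup-high} and \ref{lm:infsup-low}, and treats the theorem as an immediate consequence via standard Brezzi theory. One caveat on your side remark: the splitting $\btau_h=\btau_h^0+c\bI$ does \emph{not} give a K-ellipticity constant independent of $\tilde{\lambda}$, since a direct computation gives $(\cA(c\bI),c\bI)=\frac{n|\Omega|c^2}{2\tilde{\mu}+n\tilde{\lambda}}$, which degenerates as $\tilde{\lambda}\to\infty$; this is why the paper's Poisson-ratio remark is restricted to the zero-mean-trace case, and it is harmless here because the theorem only asserts uniformity with respect to the mesh size.
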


\begin{theorem} \label{thm:error-estimate}
Let $(\bsigma, u)\in \Sigma \times V$ be the exact solution of the
problem \eqref{equ:mixed-formulation} and $(\bsigma_h, u_h)\in
\Sigma_{h,k+1} \times V_{h,k}$ the finite element solution of
\eqref{equ:discrete:formulation}.  Assume that $\bsigma \in
H^{k+2}(\Omega;\mS)$ and $u\in H^{k+1}(\Omega;\R^n)$. Under the
conditions in Lemma \ref{lm:infsup-high} or \ref{lm:infsup-low}, we
have  
\begin{equation}\label{eq:convergent}
\|\bsigma - \bsigma_h\|_{H(\div)} + \|u-u_h\|_{0} \lesssim
h^{k+1}(|\bsigma|_{k+2} + |u|_{k+1}).
\end{equation}
\end{theorem}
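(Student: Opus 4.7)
The plan is to apply the standard Brezzi theory for mixed finite element methods, combined with suitable interpolation estimates for the spaces $\Sigma_{h,k+1}$ and $V_{h,k}$. Since Theorem \ref{thm:wellposed} supplies K-ellipticity \eqref{eq:K-ellipticity} and the inf-sup condition \eqref{eq:infsup} with constants independent of $h$ (and, via Theorem \ref{thm:incompressible}, independent of Poisson's ratio on the divergence-free subspace), the abstract theory (cf.~\cite{brezzi2012mixed}) immediately yields the quasi-optimal bound
\begin{equation*}
\|\bsigma-\bsigma_h\|_{H(\div)} + \|u-u_h\|_{0} \lesssim \inf_{\btau_h \in \Sigma_{h,k+1}} \|\bsigma-\btau_h\|_{H(\div)} + \inf_{v_h \in V_{h,k}} \|u-v_h\|_{0}.
\end{equation*}
The second infimum is handled by taking $v_h$ to be the piecewise $L^2$ projection of $u$ onto $V_{h,k}$, which gives $\|u - v_h\|_0 \lesssim h^{k+1}|u|_{k+1}$ by standard Bramble–Hilbert arguments, since $V_{h,k}$ contains all piecewise $\cP_k$ functions.

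The essential step is to exhibit a $\btau_h \in \Sigma_{h,k+1}$ with $\|\bsigma-\btau_h\|_{H(\div)} \lesssim h^{k+1}|\bsigma|_{k+2}$. I would build such a $\btau_h$ via a commuting interpolant, i.e., an operator $\Pi_h : H^{k+2}(\Omega;\mS) \cap H(\div,\Omega;\mS) \to \Sigma_{h,k+1}$ satisfying
\begin{equation*}
\div(\Pi_h \bsigma) = P_h(\div \bsigma), \qquad \|\bsigma - \Pi_h \bsigma\|_0 \lesssim h^{k+1}|\bsigma|_{k+2},
\end{equation*}
where $P_h$ is the $L^2$ projection onto $V_{h,k}$. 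Combining these two gives
\begin{equation*}
\|\div(\bsigma-\Pi_h\bsigma)\|_0 = \|\div\bsigma - P_h \div \bsigma\|_0 \lesssim h^{k+1}|\div\bsigma|_{k+1} \lesssim h^{k+1}|\bsigma|_{k+2},
\end{equation*}
and hence the desired $H(\div)$ approximation estimate.

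For the high-order case $k\ge n$, the Hu–Zhang interpolation operator into $\Sigma^{\mathrm{HZ}}_{h,k+1} \subset \Sigma_{h,k+1}$ (cf.~\cite{hu2014family,hu2015family,hu2015finite}) enjoys both the commuting diagram and the optimal local approximation, so $\Pi_h$ can be taken directly from that reference. For the lower-order case $n-2 \le k < n$, no such ready-made interpolant is available, and this is where I expect the main technical obstacle. I would proceed in two stages: first apply any locally optimal interpolation $\widetilde{\Pi}_h$ (for instance componentwise BDM, as in the proof of Lemma \ref{lm:infsup-low}) to get $\|\bsigma-\widetilde\Pi_h\bsigma\|_0 \lesssim h^{k+1}|\bsigma|_{k+2}$, then correct the divergence defect by solving, via the LBB condition \eqref{eq:infsup-high} established in Lemma \ref{lm:infsup-low}, for a $\bzeta_h\in\Sigma_{h,k+1}$ with $\div\bzeta_h = P_h\div\bsigma - \div\widetilde\Pi_h\bsigma$ and $\|\bzeta_h\|_{H(\div)}\lesssim h^{k+1}|\bsigma|_{k+2}$; then $\Pi_h\bsigma := \widetilde\Pi_h\bsigma + \bzeta_h$ has the required properties. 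Inserting the resulting bound into the quasi-optimal estimate proves \eqref{eq:convergent}.
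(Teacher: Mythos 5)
Your opening steps (quasi-optimality from Theorem \ref{thm:wellposed} and the $L^2$ projection for the displacement) coincide with the paper's, but for the stress term you take a genuinely different and considerably heavier route than the paper does. The paper's key observation is that $\Sigma_{h,k+1}$ is the \emph{full} $C^{\div}$--$\cP_{k+1}$ space, so it contains the continuous (vertex- and edge-continuous) piecewise $\cP_{k+1}$ symmetric-matrix-valued Lagrange space, which lies in $H^1(\Omega;\mS)\subset H(\div,\Omega;\mS)$. Hence the Scott--Zhang interpolant $I_h^{SZ}\bsigma$ of each is admissible and gives $\inf_{\btau_h\in\Sigma_{h,k+1}}\|\bsigma-\btau_h\|_{H(\div)} \le \|\bsigma-I_h^{SZ}\bsigma\|_{1}\lesssim h^{k+1}|\bsigma|_{k+2}$ in one line, uniformly for all $k$ and in both the high- and low-order regimes. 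No commuting interpolant is needed. Your approach is the classical one for mixed methods whose stress space does \emph{not} contain a full continuous polynomial space, and it buys nothing here while costing extra machinery.

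Within your route there is also a concrete flaw in the low-order branch: the componentwise BDM interpolant $\widetilde\Pi_h\bsigma$ of a symmetric field is in general \emph{not} symmetric (the row-wise BDM degrees of freedom do not preserve symmetry of the matrix), so $\widetilde\Pi_h\bsigma\notin L^2(\Omega;\mS)$ and your corrected $\Pi_h\bsigma=\widetilde\Pi_h\bsigma+\bzeta_h$ does not land in $\Sigma_{h,k+1}$. Moreover, BDM already commutes with the divergence, so your divergence defect $P_h\div\bsigma-\div\widetilde\Pi_h\bsigma$ vanishes and the correction $\bzeta_h$ is zero, leaving the symmetry defect unrepaired. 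To salvage this branch you would have to add a $\curl\rho_h$ symmetrization as in the proof of Lemma \ref{lm:infsup-low} and track that the Stokes correction has size $O(h^{k+1})$; but the cleaner fix is simply to use the Scott--Zhang interpolant as the paper does.
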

\begin{proof}
The well-posedness implies the following quasi-optimal error estimate,
\begin{equation}\label{eq:err_total}
\|\bsigma - \bsigma_h\|_{H(\div)} + \|u-u_h\|_{0} \lesssim
\inf_{\btau_h \in \Sigma_{h,k+1}, \bv_h \in V_{h,k} }( \| \bsigma -
    \btau_{h}\|_{H(\div)} + \|u - v_h\|_{0}),
\end{equation}
which gives rise to \eqref{eq:convergent} due to the standard  
$L^2$ projection and Scott-Zhang interpolation (cf.
\cite{scott1990finite}).
\end{proof}

%% end of file %% 

\section{Hybridization}
\label{sec:hybrid}
To implement the mixed method \eqref{equ:discrete:formulation}, we
need the degrees of freedom (d.o.f.) or the nodal basis of the
discrete stress spaces. In the definition \eqref{eq:def-stress-space},
however, we state the inter-element continuity directly instead of
using the d.o.f., which is different from Ciarlet's convention for the
finite elements. More precisely, there is no locally defined d.o.f. on
elements for the discrete stress spaces \eqref{eq:def-stress-space}.
A similar argument can be found in \cite{arnold2002mixed}. In 
light of \cite{morgan1975nodal}, where the authors constructed the
nodal basis for the space of piecewise $C^1$ polynomials, we can
globally form the nodal basis for our discrete stress spaces,
whose dimensions depend on the singular vertices of the grids. 

Instead of presenting the details of the nodal basis, we adopt a
simpler implementation technique---the hybridization method (cf.
\cite{arnold1985mixed, cockburn2009unified}), which imposes the
inter-element continuity by Lagrange multiplier. The hybridization
method removes the inter-element continuity from the space
$\Sigma_{h,k+1}$, which results in a discontinuous stress space
\begin{equation} \label{eq:space-stress}
\Sigma_{h,k+1}^{-1} := \{ \btau_h \in L^2(\Omega; \mS)~|~ \btau_h |_{K}
\in \cP_{k+1}(K; \mS)\quad \forall K \in \cT_h\}.
\end{equation}
To enforce the inter-element continuity of the stress, we introduce the
Lagrange multiplier space $M_{h,k+1}$, where 
\begin{equation} \label{eq:multiplier-space}
M_{h, k+1} := \{ \mu_h \in L^2(\cF_h, \R^n)~|~ \mu_h |_{F} \in
\cP_{k+1}(F,\R^n) \quad \forall F \in \cF_h^i, \mbox{ and } \mu_h
|_{\cF^{\partial}_h} = 0 \}. 
\end{equation}

The hybridized mixed finite element method is to find $(\bsigma_h,
u_h, \lambda_h )\in \Sigma^{-1}_{h,k+1} \times V_{h,k} \times
M_{h,k+1} $ satisfying
\begin{subequations}\label{eq:hybrid-system}
\begin{align}
(\mathcal{A} \bsigma_h , \btau_h)_{\cT_h} &+ ( \div_h \btau_h,
u_h)_{\cT_h} - \langle [ \btau_h ], \lambda_h\rangle _{\cF_h^i} = 0
~\quad \qquad \forall \btau_h \in \Sigma^{-1}_{h,k+1},
\label{eq:hybrid-systemA}\\
(\div_h \bsigma_h, v_h )_{\cT_h} & \qquad \qquad \qquad \qquad \qquad
\qquad \qquad = (f, v_h) \quad \forall v_h \in V_{h,k},
\label{eq:hybrid-systemB}\\
-\langle [\bsigma_h], \mu_h \rangle _{\cF_h^i} & \qquad \qquad \qquad
\qquad \qquad \qquad \qquad = 0 ~\quad \qquad \forall \mu_h \in
M_{h,k+1}.
 \label{eq:hybrid-systemC}
\end{align}
\end{subequations}
Here, $\div_h$ is the broken divergence operator. For convenience, let
$\mathcal{B} := \div_h: \Sigma_{h,k+1}^{-1} \mapsto V_{h,k}$ and
$\mathcal{C}:\Sigma_{h,k+1}^{-1} \mapsto M_{h,k+1}$ defined by
\begin{equation} \label{eq:operator-C}
\mathcal{C} \btau|_{F} := 
\begin{cases}
[\btau]|_F & \mbox{for } F \in  \cF^i_{h}, \\ 
0 & \mbox{for } F \in \cF^{\partial}_{h}.
\end{cases}
\end{equation}
The adjoint of these operators are defined as $\mathcal{B}^*: V_{h,k}
\mapsto \Sigma_{h,k+1}^{-1}$ and $\mathcal{C}^*:M_{h,k+1}
\mapsto\Sigma_{h,k+1}^{-1}$ such that for any $(\btau_h, v_h, \mu_h) \in
\Sigma_{h,k+1}^{-1} \times V_{h,k} \times M_{h,k+1}$,
\begin{equation*}
(\mathcal{B}^* v_h, \btau_h) = (v_h,
\mathcal{B}\btau_h) \quad\text{and}\quad  (\mathcal{C}^* \mu_h, \btau_h)
= \langle\mu_h, \mathcal{C} \btau_h \rangle_{\cF_h^i}.
\end{equation*}

The following theorem shows the property of hybridized method
given in \eqref{eq:hybrid-system}. 

\begin{theorem}\label{thm:hybrid-well-posed}
There exists a solution $(\bsigma_h, u_h, \lambda_h )\in
\Sigma^{-1}_{h,k+1} \times V_{h,k} \times M_{h,k+1} $ for the
hybridized system \eqref{eq:hybrid-system}. Moreover, the first two
components of the solution are unique and coincide with that of the
mixed method \eqref{equ:discrete:formulation}.
\end{theorem}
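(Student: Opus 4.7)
The strategy is to establish a two-way correspondence between solutions of the hybridized system \eqref{eq:hybrid-system} and solutions of the mixed formulation \eqref{equ:discrete:formulation}, since Theorem \ref{thm:wellposed} already supplies a unique $(\bsigma_h, u_h)$ for the latter. I would separate the argument into a \emph{reduction direction}, which yields uniqueness and the coincidence statement, and a \emph{lifting direction}, which yields existence of the multiplier.

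For the reduction, let $(\bsigma_h, u_h, \lambda_h)$ be any solution of \eqref{eq:hybrid-system}. The jump $[\bsigma_h]$ restricted to each $F \in \cF_h^i$ lies in $\cP_{k+1}(F; \R^n)$, so after trivial extension on $\cF_h^\partial$ it belongs to $M_{h,k+1}$. Testing \eqref{eq:hybrid-systemC} against $\mu_h = [\bsigma_h]$ then forces $[\bsigma_h] = 0$ on every interior face, so $\bsigma_h \in \Sigma_{h,k+1}$. Restricting the test space in \eqref{eq:hybrid-systemA} to $\btau_h \in \Sigma_{h,k+1} \subset \Sigma_{h,k+1}^{-1}$ kills the jump term, and together with \eqref{eq:hybrid-systemB} this recovers \eqref{equ:discrete:formulation} exactly. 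Theorem \ref{thm:wellposed} then guarantees that $(\bsigma_h, u_h)$ is unique and agrees with the mixed-method solution.

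For the lifting, fix the unique mixed-method solution $(\bsigma_h, u_h)$ and define the linear functional
\[
L(\btau_h) := (\mathcal{A}\bsigma_h, \btau_h)_{\cT_h} + (\div_h \btau_h, u_h)_{\cT_h}, \qquad \btau_h \in \Sigma_{h,k+1}^{-1}.
\]
The key identity is $\ker(\mathcal{C}) = \Sigma_{h,k+1}$: a piecewise $\cP_{k+1}$ symmetric-matrix field has zero inter-element jump on all $F \in \cF_h^i$ if and only if it lies in $H(\div, \Omega; \mS)$. The first equation of \eqref{equ:discrete:formulation} says exactly that $L$ vanishes on $\Sigma_{h,k+1} = \ker(\mathcal{C})$, so in finite dimensions $L$ factors through $\mathcal{C}$ via $\Sigma_{h,k+1}^{-1}/\ker(\mathcal{C}) \cong \mathrm{range}(\mathcal{C})$. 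Extending the induced functional from $\mathrm{range}(\mathcal{C})$ to all of $M_{h,k+1}$ (trivially on any algebraic complement) and invoking Riesz representation with respect to the $L^2$ face pairing produces some $\lambda_h \in M_{h,k+1}$ satisfying $\langle [\btau_h], \lambda_h\rangle_{\cF_h^i} = L(\btau_h)$ for every $\btau_h \in \Sigma_{h,k+1}^{-1}$, which is precisely \eqref{eq:hybrid-systemA}. Equations \eqref{eq:hybrid-systemB}--\eqref{eq:hybrid-systemC} then hold automatically because $\bsigma_h \in \Sigma_{h,k+1}$ solves the mixed problem.

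The main obstacle is the clean identification $\ker(\mathcal{C}) = \Sigma_{h,k+1}$ together with the subsequent factoring step, which must be phrased carefully because the theorem deliberately does \emph{not} assert uniqueness of $\lambda_h$: the multiplier is determined only up to $\mathrm{range}(\mathcal{C})^\perp$, and different choices of algebraic complement give different admissible $\lambda_h$, all yielding the same $(\bsigma_h, u_h)$. This non-uniqueness is the same ``nontrivial kernel'' phenomenon tied to singular vertices and singular edges that the introduction mentions, and any careful writeup should flag it explicitly rather than attempt to remove it.
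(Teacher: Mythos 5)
Your proposal is correct and follows essentially the same route as the paper: both arguments rest on the identity $\ker(\mathcal{C}) = \Sigma_{h,k+1}$, obtain the multiplier by observing that the residual functional $\mathcal{A}\bsigma_h + \mathcal{B}^*u_h$ annihilates $\Sigma_{h,k+1}$ and hence lies in ${\rm R}(\mathcal{C}^*) = (\Sigma_{h,k+1})^{\perp}$, and prove uniqueness by noting that \eqref{eq:hybrid-systemC} forces $\bsigma_h$ to be $H(\div)$-conforming so that the hybrid system collapses to \eqref{equ:discrete:formulation}. Your explicit remark that $\lambda_h$ is determined only up to ${\rm R}(\mathcal{C})^{\perp}$ matches the paper's subsequent kernel discussion.
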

\begin{proof}
By Theorem \eqref{thm:wellposed}, there exists a solution $(\bsigma_h,
u_h) \in \Sigma_{h,k+1} \times V_{h,k}$ for the mixed method
\eqref{equ:discrete:formulation}.  It is obvious that $(\bsigma_h,
u_h)$ satisfies the last two equations \eqref{eq:hybrid-systemB}
and \eqref{eq:hybrid-systemC}. The first equation
\eqref{eq:hybrid-systemA} can be written as 
\begin{equation} \label{eq:operatorForm}
\mathcal{C}^* \lambda_h =   \mathcal{A} \bsigma_h + \mathcal{B}^* u_h.
\end{equation}
Since ${\rm R}(\mathcal{C}^*)^{\bot} = \ker(\mathcal{C})$ and
$\ker(\mathcal{C}) = \Sigma_{h,k+1}$ , we have 
$$  
{\rm R}(\mathcal{C}^*)= (\Sigma_{h,k+1})^\bot. 
$$
Here, $(\Sigma_{h,k+1})^\bot$ is the $L^2$ orthogonal complement of
$\Sigma_{h,k+1}$ in  $\Sigma_{h,k+1}^{-1}$ with respect to the inner
product $(\cdot, \cdot)$. Since $(\bsigma_h, u_h)$ satisfies
\eqref{eq:hybrid-systemA} for $\btau_h \in \Sigma_{h,k+1}$, it holds
that 
\begin{equation} \label{eq:mixed-existence}
\mathcal{A} \bsigma_h + \mathcal{B}^* u_h \in (\Sigma_{h,k+1})^\bot.
\end{equation}
Hence, there exists $ \lambda_h \in M_{h,k+1}$ satisfying
\eqref{eq:operatorForm}, which indicates the existence of the solution
for \eqref{eq:hybrid-system}.

For the uniqueness, assuming that $(\bsigma_h, u_h, \lambda_h )$
satisfies \eqref{eq:hybrid-system}, then \eqref{eq:hybrid-systemC}
implies that $\bsigma_h\in \Sigma_{h,k+1}$. Moreover, since
$\Sigma_{h,k+1}\subset \Sigma_{h,k+1}^{-1}$, choosing $\btau_h \in
\Sigma_{h,k+1}$, we can see that the system \eqref{eq:hybrid-systemA}
and \eqref{eq:hybrid-systemB} is identical to the system of the mixed
method \eqref{equ:discrete:formulation}. Therefore, $(\bsigma_h, u_h)$
solves \eqref{equ:discrete:formulation}. The uniqueness of
$(\bsigma_h, u_h)$ follows from Theorem \ref{thm:wellposed}. This
completes the proof.
\end{proof}

\begin{remark}
We note that Hu-Zhang elements in \eqref{eq:Hu-Zhang} can also be
written as 
$$
\begin{aligned}
\Sigma_{h,k+1}^{\rm HZ} = \{\btau  \in H(\div,\Omega;\mS)~|~&
\btau|_K \in \cP_{k+1}(K;\mS)\quad \forall K\in \cT_h,\\
 &\text{and } \btau|_a ~\text{is continuous for any } a \in \cN_h\}.
 \end{aligned}
$$ 
We enrich the space $\Sigma_{h,k+1}^{\rm HZ}$ by relaxing the
continuity on the element vertices. Similar technique can be used in
Arnold-Winther \cite{arnold2002mixed} ($n=2$) or 
Arnold-Awanou-Winther \cite{arnold2008finite} ($n=3$) elements
$\tilde{\Sigma}_{h,k+n-}-V_{h,k}$, where
$$
\begin{aligned}
\tilde{\Sigma}_{h,k+n-} := \{\btau  \in H(\div,\Omega;\mS)~|~&
\btau|_K \in \tilde{\Sigma}_{k+n-}(K) \quad \forall K\in \cT_h, \\
&\text{and }\btau|_a ~\text{is continuous for any } a \in \cN_h\}.
 \end{aligned}
$$ 
and $\tilde{\Sigma}_{k+n-}(K) := \{\btau \in \cP_{k+n}(K;\mS)~|~\div
\btau \in \cP_k(K;\R^n)\}$. We denote the hybridized version of
$\tilde{\Sigma}_{h,k+n-}$ by 
$$
\Sigma_{h,k+n-} := \{\btau  \in H(\div,\Omega;\mS)~|~
\btau|_K \in \tilde{\Sigma}_{k+n-}(K) \quad \forall K\in \cT_h\}.
$$
Table \ref{tb:HZvsAW} compares $\tilde{\Sigma}_{h,k+n-}$ and
$\Sigma_{h,k+1}^{\rm HZ}$ to their hybridized versions. 
%%As we can see from Table \ref{tb:HZvsAW}, the advantages of
%%$\Sigma_{h,k+1}-V_{h,k}$ is by hybridization, which solves the system
%%only involving the Lagrange multiplier and recovers the primal
%%variables with optimal accuracy.

\begin{table}[!htbp]
\caption{$\tilde{\Sigma}_{h,k+n-}$, $\Sigma_{h,k+1}^{\rm HZ}$ and
their hybridized versions.}\centering
{\small{
\begin{tabular}{|c|c|c|c|c@{}|}
  \hline
\multicolumn{1}{|c|}{Elements}	&   \multicolumn{1}{|c|}{Gerneral
  grids}	&  \multicolumn{1}{|c|}{Special grids}	&
  \multicolumn{1}{|c|}{Hybridizable} & \multicolumn{1}{|c|}{Lagrange
multiplier}\\
  \hline
 $\tilde{\Sigma}_{h,k+n-}-V_{h,k}$	& $k\ge1$	& -- 		& $\times$	& --\\ 
 $\Sigma_{h,k+1}^{\rm HZ}-V_{h,k}$	& $k\ge n$	& -- 		& $\times$	& --\\ 
 $\Sigma_{h,k+n-}-V_{h,k}$		& $k\ge1$	& -- 		& $\surd$		& $M_{h,k+n}$\\ 
 $\Sigma_{h,k+1}-V_{h,k}$		& $k\ge n$	& $k\ge n-2$ & $\surd$	& $M_{h,k+1}$ 
   \\ \hline
\end{tabular} 
}}
\label{tb:HZvsAW}
\end{table}
\end{remark}

\subsection{Kernel of the Hybrid System}

Theorem \ref{thm:hybrid-well-posed} implies that the kernel of the
hybridized mixed system \eqref{eq:hybrid-system} is  $\{0\} \times
\{0\} \times \ker(\mathcal{C}^*)$. It is straightforward that  
$
\ker(\mathcal{C}^*)  = {\rm R}(\mathcal{C})^{\bot},
$
where ${\rm R}(\mathcal{C})^{\bot}$ is the $L^2$ orthogonal complement
of ${\rm R}(\mathcal{C})$ in the space $M_{h,k+1}$ with respect to the
inner product $\langle\cdot, \cdot\rangle_{\cF_h^i}$.  We therefore
have the following decomposition for the multiplier space 
$$
M_{h,k+1} = {\rm R}(\mathcal{C})\oplus {\rm R}(\mathcal{C})^{\bot}.
$$
We note that the dimension of ${\rm R}(\mathcal{C})^{\bot}$ depends on
the grid.

\begin{definition}[\cite{morgan1975nodal}]
In the 2D case, an interior vertex $a\in \cN_h$ ($a\not \in\partial
\Omega$) is called singular,  if and  only if the  edges meeting
at this vertex   fall on two straight lines. 
\end{definition}
\begin{figure}[h!]
\centering\includegraphics[scale =.5]{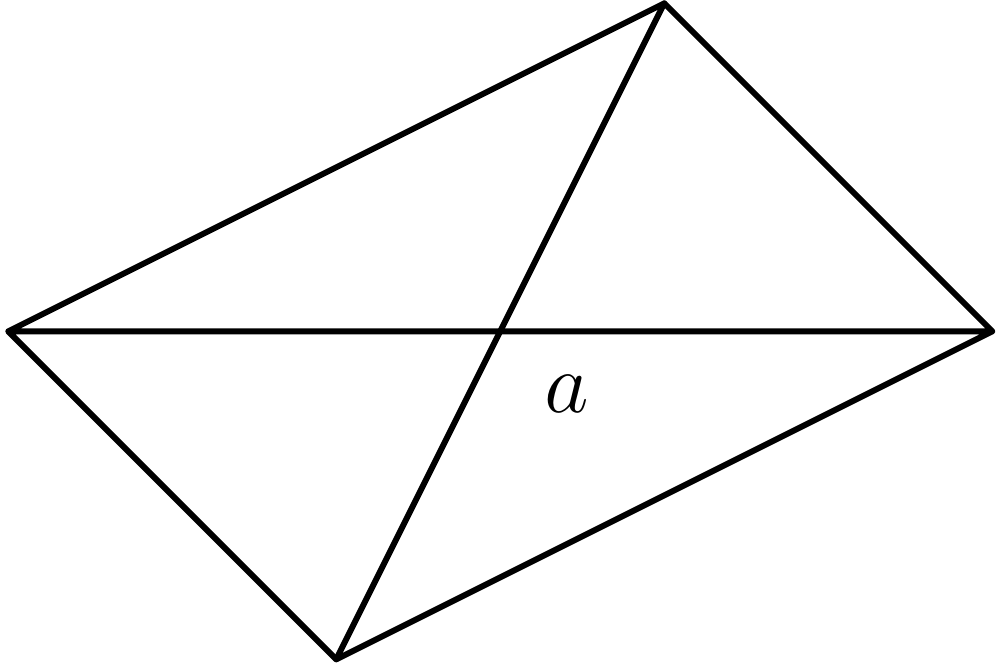}
\caption{Singular vertex $a$.}
\label{fig:singular-vertex}
\end{figure}

\begin{lemma}\label{lm:local-basis}
Both ${\rm R}(\mathcal{C})$ and ${\rm R}(\mathcal{C})^{\bot}$ have
local basis, that is,
$$
{\rm R}(\mathcal{C}) = \mathrm{span}\{ \varphi_1,
  \varphi_2,\cdots,\varphi_{N_1}\},
$$
$$
{\rm R}(\mathcal{C})^{\bot} = \mathrm{span}\{\psi_1,
\psi_2,\cdots,\psi_{N_2}\},
$$
where $\varphi_i, \psi_j \in M_{h,k+1}$ are locally supported and
$N_1, N_2$ are the dimensions of the spaces ${\rm R}(\mathcal{C}),
{\rm R}(\mathcal{C})^{\bot}$, respectively. Moreover, for the 2D case, if
there is no interior singular vertex in $\cT_h$, we have $N_2 =
0$ and $\{\varphi_1, \cdots, \varphi_{N_1}\}$ can be chosen such that  
the mass matrix $\boldsymbol{M} = (\langle \varphi_i,
\varphi_j\rangle_{\cF_h^i})$ is well-conditioned, that is,
\begin{equation} \label{eq:basis-L2-stability}
\sum_{i=1}^{N_1} c_i^2 \|\varphi_i\|_0^2 \eqsim \|\sum_{i=1}^{N_1} c_i
\varphi_i\|_0^2 \qquad \forall (c_1, c_2, \cdots, c_{N_1}) \in
\mathbb{R}^{N_1}. 
\end{equation}
\end{lemma}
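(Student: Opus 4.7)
The plan is to exploit the element-local structure of the operators $\mathcal{C}$ and $\mathcal{C}^*$. First, I would characterize ${\rm R}(\mathcal{C})^{\bot} = \ker(\mathcal{C}^*)$ via an element-wise condition. From
\[
(\mathcal{C}^*\mu, \btau)_{\cT_h} = \langle \mu, [\btau]\rangle_{\cF_h^i} = \sum_{K \in \cT_h}\langle \mu, \btau\nu_K\rangle_{\partial K \cap \cF_h^i},
\]
one reads off that $\mu \in \ker(\mathcal{C}^*)$ if and only if, for every $K \in \cT_h$, the restriction $\mu|_{\partial K \cap \cF_h^i}$ is $L^2(\partial K)$-orthogonal to the trace space $\{\btau\nu_K|_{\partial K} : \btau \in \cP_{k+1}(K;\mS)\}$. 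The obstruction to surjectivity of this trace map onto $\cP_{k+1}(\partial K;\R^n)$ is supported at the vertices of $K$: at each vertex $v \in K$, the values of $\btau\nu$ on the two adjacent edges are coupled through the single matrix $\btau(v) \in \mS$, producing one independent linear constraint per vertex.

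Using this local description, I would construct compactly supported bases for both summands. For ${\rm R}(\mathcal{C})^{\bot}$, each basis function $\psi_j$ is obtained by solving a small linear system on the star of a single vertex $a$, so $\mathrm{supp}(\psi_j) \subset \bigcup_{K \ni a} K$; only singular vertices produce nontrivial solutions. For ${\rm R}(\mathcal{C})$, I would start with a standard face-by-face nodal basis of $M_{h,k+1}$ and $L^2$-orthogonally subtract the components lying in ${\rm R}(\mathcal{C})^\bot$. Because each $\psi_j$ is localized to a single vertex star, this correction alters each face basis function on only a bounded patch, so the resulting $\{\varphi_i\}$ remain locally supported.

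For the second half of the lemma (2D, no singular interior vertex), I would prove surjectivity of $\mathcal{C}$, which forces $N_2 = 0$. Given $\mu \in M_{h,k+1}$, I would construct a lift $\btau_h \in \Sigma_{h,k+1}^{-1}$ with $[\btau_h] = \mu$ by first splitting $\mu|_F = g^+_F + g^-_F$ on each interior face and then, on each element $K$, choosing $\btau_K \in \cP_{k+1}(K;\mS)$ whose normal traces on $\partial K$ are the prescribed $g$'s. The existence of $\btau_K$ reduces at each vertex $v \in K$ to the symmetry identity $\nu_1 \cdot (\btau(v)\nu_2) = \nu_2 \cdot (\btau(v)\nu_1)$; assembling these compatibility conditions over all elements meeting an interior vertex $a$ yields a linear system in the unknown vertex matrices $\{\btau_K(a)\}_{K \ni a}$, and the non-singularity of $a$ in the sense defined above is exactly what makes this combined system solvable for arbitrary jump data, so the splittings can be chosen consistently. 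This Morgan--Scott-type vertex analysis is the main technical obstacle. With $N_2 = 0$ in hand, I would take $\{\varphi_i\}$ to be an $L^2$-scaled Legendre-type nodal basis of $\cP_{k+1}(F;\R^n)$ on each interior face $F$. Basis functions supported on distinct faces are automatically $L^2(\cF_h^i)$-orthogonal, so the mass matrix $\boldsymbol{M}$ is block diagonal, each block being a standard well-conditioned polynomial mass matrix on a single face (uniform in $h$ by a scaling argument), which immediately yields \eqref{eq:basis-L2-stability}.
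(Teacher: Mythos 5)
Your outline is essentially correct and reaches the right conclusions, but it gets there by a genuinely different mechanism than the paper. You characterize $\ker(\mathcal{C}^*)$ element by element and locate the obstruction at the vertices via the single symmetry constraint on $\btau(v)\in\mS$; the paper instead decomposes \emph{both} $\Sigma_{h,k+1}^{-1}$ and $M_{h,k+1}$ as direct sums over all degree-$(k{+}1)$ Lagrange nodes $a$ (using the nodal basis on one side and its dual basis on the other), observes that the pairing $\langle\mu,\mathcal{C}\btau\rangle_{\cF_h^i}$ is block diagonal with respect to this indexing, and so obtains $\mathrm{R}(\mathcal{C})=\bigoplus_a\mathcal{C}(\Sigma_{h,k+1,a}^{-1})$ and $\mathrm{R}(\mathcal{C})^{\bot}=\bigoplus_a M_{h,k+1,a,\perp}$ in one stroke. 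That decomposition buys local supports for the bases of both summands simultaneously and, in the nonsingular case, identifies $\mathcal{C}(\Sigma_{h,k+1,a}^{-1})$ with the span of the Lagrange nodal functions $\varphi_a^F e_i$ on each interior face, so the well-conditioned, face-block-diagonal mass matrix is immediate --- exactly the conclusion you reach via surjectivity of $\mathcal{C}$. Your route is workable but has two soft spots you should be aware of. First, your claim that each $\psi_j$ lives on a single vertex star needs an argument: the $L^2(\partial K)$-representer of the vertex functional is a degree-$(k{+}1)$ polynomial on two whole edges, so you must check that the inter-element compatibility conditions on a shared edge decouple endpoint by endpoint (they do, because the Riesz representers of point evaluation at the two endpoints are linearly independent in $\cP_{k+1}(F)$, but this is precisely the decoupling the paper's node-indexed direct sum gives for free). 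Second, projecting a nodal basis of $M_{h,k+1}$ onto $\mathrm{R}(\mathcal{C})$ produces a locally supported \emph{spanning set} of cardinality $N_1+N_2$, not a basis, so you still need to extract $N_1$ independent members. Finally, both you and the paper assert rather than prove that the vertex obstruction is nontrivial exactly at geometrically singular vertices (the Morgan--Scott-type analysis you flag as the main obstacle); the paper's quantitative version of that vertex construction appears only in the appendix proof of Lemma \ref{lm:stability-local-jump}, so you are not below the paper's own standard here, but a complete writeup would have to supply it.
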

\begin{proof}
The detailed proof is given in the appendix. 
\end{proof}

\subsection{SPSD System for Lagrange Multiplier}

In this subsection, we eliminate the variable $\bsigma_h$ and $u_h$ in
the hybridized mixed system \eqref{eq:hybrid-system}, then obtain a
linear system solely for $\lambda_h$.

For any $\lambda \in M_{h,k+1}$, we define two local problems: 

\begin{enumerate}
\item Find $(\bsigma_{\lambda}, u_{\lambda}) \in
\Sigma_{h,k+1}^{-1} \times V_{h,k}$ such that for any element $K\in
\cT_h$, 
\begin{subequations}\label{eq:lsm}
\begin{align}
&(\mathcal{A} \bsigma_ {\lambda}, \btau_h )_{K} + ( u_{\lambda}, \div
\btau_h)_{K} = \langle \lambda, \btau_h \nu\rangle_{\partial
  K} \qquad \forall \btau_h  \in \cP_{k+1}(K;\mS),\label{eq:lsm1}\\
&(\div \bsigma_\lambda, v_h)_{K} ~ \qquad \qquad \qquad = 0 ~ \qquad
\qquad \qquad \forall v_h \in \cP_{k}(K;\mathbb{R}^n).\label{eq:lsm2}
\end{align}
\end{subequations}
\item Find $(\tilde{ \bsigma}_{f} , \tilde{u}_{f}) \in
\Sigma_{h,k+1}^{-1} \times V_{h,k}$ such that for any element $K\in
\cT_h$,
\begin{subequations}\label{eq:lsf}
\begin{align}
&(\mathcal{A} \tilde{\bsigma}_{ f}, \btau_h)_{K} + ( \tilde{u}_{
f}, \div \btau_h)_{K} =0 \quad \qquad \qquad \forall \btau_h  \in
\cP_{k+1}(K;\mS), \label{eq:lsf1} \\
&(\div \tilde{\bsigma}_{f},  v_h)_{K} ~ \qquad \qquad \qquad = (f, v_h
)_{K} \qquad \forall v_h \in \cP_{k}(K;\mathbb{R}^n). \label{eq:lsf2}
\end{align}
\end{subequations}
\end{enumerate} 

The following lemma shows that both $(\bsigma_m, u_m)$ and $(\tilde{
\bsigma}_{f}, \tilde{ u}_{f})$  are well defined. 

\begin{lemma}\label{lm:loc_recover}
The systems \eqref{eq:lsm}  and \eqref{eq:lsf} are unisolvent.
Moreover, the solution of the system \eqref{eq:hybrid-system}
satisfies 
\begin{equation}\label{eq:loc_recover}
\bsigma_h =\bsigma_{ \lambda_h }+\tilde{\bsigma}_{ f}  \text{\qquad
  and     \qquad}  u_h = u_{\lambda_h} +\tilde{u}_{ f}. 
\end{equation}
\end{lemma}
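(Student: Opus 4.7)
The plan is to first show the two local systems are unisolvent, and then derive the decomposition by localizing the hybridized system to a single element and invoking uniqueness.

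For unisolvence of \eqref{eq:lsm} and \eqref{eq:lsf}, note that both systems have equal-dimension trial and test spaces on each element $K$, so existence is equivalent to uniqueness, and it suffices to show that the homogeneous problem ($\lambda=0$ in \eqref{eq:lsm} and $f=0$ in \eqref{eq:lsf}) admits only the trivial solution. From \eqref{eq:lsm2} applied with $v_h = \div \bsigma_\lambda \in \cP_k(K;\R^n)$ we get $\div \bsigma_\lambda = 0$, then taking $\btau_h = \bsigma_\lambda$ in \eqref{eq:lsm1} yields $(\cA\bsigma_\lambda, \bsigma_\lambda)_K = 0$, and the positive definiteness of $\cA$ forces $\bsigma_\lambda = 0$. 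The remaining relation $(u_\lambda, \div \btau_h)_K = 0$ for all $\btau_h \in \cP_{k+1}(K;\mS)$ implies $u_\lambda = 0$ provided $\div: \cP_{k+1}(K;\mS)\to \cP_k(K;\R^n)$ is surjective. This I expect to be the main obstacle, but it is resolved by a direct construction: for any $v \in \cP_k(K;\R^n)$ define a diagonal matrix $\btau$ whose $i$-th diagonal entry is an antiderivative of $v_i$ in the variable $x_i$. Each entry lies in $\cP_{k+1}$, $\btau$ is symmetric, and $\div \btau = v$.

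For the second part, let $(\bsigma_h, u_h, \lambda_h)$ solve \eqref{eq:hybrid-system}. Fix $K \in \cT_h$ and extend any $\btau \in \cP_{k+1}(K;\mS)$ by zero to an element of $\Sigma_{h,k+1}^{-1}$. Testing \eqref{eq:hybrid-systemA} against this extension, the jump term $\langle[\btau],\lambda_h\rangle_{\cF_h^i}$ collapses to $\langle \btau \nu, \lambda_h\rangle_{\partial K}$ after using $\lambda_h|_{\cF_h^\partial}=0$, so
\begin{equation*}
(\cA \bsigma_h, \btau)_K + (u_h, \div \btau)_K = \langle \lambda_h, \btau \nu \rangle_{\partial K}.
\end{equation*}
Similarly, testing \eqref{eq:hybrid-systemB} against $v \in \cP_k(K;\R^n)$ extended by zero gives $(\div \bsigma_h, v)_K = (f,v)_K$. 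Thus $(\bsigma_h|_K, u_h|_K)$ satisfies on each $K$ the combined local problem whose right-hand side is the sum of those in \eqref{eq:lsm} (with $\lambda = \lambda_h$) and \eqref{eq:lsf}.

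By linearity of these local problems, $(\bsigma_{\lambda_h}+\tilde{\bsigma}_f,\, u_{\lambda_h}+\tilde{u}_f)$ also satisfies exactly this system on every $K$. Invoking the local uniqueness established in the first step, on each element we conclude $\bsigma_h = \bsigma_{\lambda_h}+\tilde{\bsigma}_f$ and $u_h = u_{\lambda_h}+\tilde{u}_f$, which is \eqref{eq:loc_recover}. The argument is essentially a static condensation: the hybridization makes $\lambda_h$ the only genuinely global unknown, while $\bsigma_h$ and $u_h$ are recovered element-by-element from $\lambda_h$ and $f$.
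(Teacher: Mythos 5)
Your proof is correct and is precisely the standard static-condensation argument that the paper itself invokes (it omits the proof, citing the usual one in the hybridization literature): localize the hybridized equations element by element, observe the right-hand sides add, and conclude by local uniqueness. The one ingredient that needs care --- surjectivity of $\div:\cP_{k+1}(K;\mS)\to\cP_k(K;\R^n)$ so that $u_\lambda=0$ follows in the homogeneous case --- you handle cleanly with the diagonal-antiderivative construction, so nothing is missing.
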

\begin{proof}
The proof is similar to the standard one given in
\cite{cockburn2009unified} and is therefore omitted here. 
\end{proof}

Note that $(\bsigma_{\lambda_h}, u_{\lambda_h})$ and $(\tilde{
\bsigma}_{f} , \tilde{ u}_{f})$ can be computed element by
element. The above lemma means that the $\bsigma_h$ and $u_h$ can be
locally recovered after solving the variable $\lambda_h$. 

\begin{theorem} \label{tm:hybrid-multiplier}
The Lagrange multiplier $\lambda_h$  satisfies 
\begin{equation}\label{eq:multiplier}
s(\lambda_h, \mu_h) =-(f, u_{ \mu_h})  \qquad \forall \mu \in M_{h,k+1},
\end{equation}
where $s(\lambda_h, \mu_h) =(\mathcal{A} \bsigma_{\lambda_h}, \bsigma_{
\mu_h})$. Moreover, the system \eqref{eq:multiplier} is symmetric
positive-semidefinite and its kernel is ${\rm R}(\mathcal{C})^{\bot}$.
\end{theorem}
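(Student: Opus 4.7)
The plan is to unpack $s(\cdot,\cdot)$ by testing the local problems \eqref{eq:lsm} and \eqref{eq:lsf} against each other, and then combine the result with \eqref{eq:hybrid-systemC} and the decomposition of Lemma \ref{lm:loc_recover}. I will use repeatedly two facts: $\div \bsigma_\mu|_K = 0$ for every $\mu \in M_{h,k+1}$ and every $K \in \cT_h$, by \eqref{eq:lsm2}; and $\sum_K \langle \mu, \btau\nu\rangle_{\partial K} = \langle \mu, [\btau]\rangle_{\cF_h^i}$ for any $\btau$, thanks to $\mu|_{\cF_h^\partial}=0$.

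First I would prove the identity
$$ s(\lambda, \mu) = (\mathcal{A} \bsigma_\lambda, \bsigma_\mu) = \langle \mu, [\bsigma_\lambda]\rangle_{\cF_h^i} = \langle \lambda, [\bsigma_\mu]\rangle_{\cF_h^i}, $$
which follows by inserting $\btau = \bsigma_\lambda$ into \eqref{eq:lsm1} written for $\mu$, summing over $K$, and dropping $(u_\mu, \div \bsigma_\lambda)_K = 0$. Symmetry of $s$ and its positive semi-definiteness $s(\mu,\mu) = (\mathcal{A}\bsigma_\mu, \bsigma_\mu) \ge 0$ are then immediate from the symmetry and positivity of $\mathcal{A}$.

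To establish the right-hand side of \eqref{eq:multiplier}, I would use Lemma \ref{lm:loc_recover} to write $\bsigma_h = \bsigma_{\lambda_h} + \tilde{\bsigma}_f$ and then test \eqref{eq:hybrid-systemC} against $\mu_h$, converting $s(\lambda_h, \mu_h) = \langle \mu_h, [\bsigma_{\lambda_h}]\rangle$ into $-\langle \mu_h, [\tilde{\bsigma}_f]\rangle$. Plugging $\btau = \tilde{\bsigma}_f$ into \eqref{eq:lsm1} for $\mu_h$ and summing yields
$$ \langle \mu_h, [\tilde{\bsigma}_f]\rangle_{\cF_h^i} = (\mathcal{A}\bsigma_{\mu_h}, \tilde{\bsigma}_f) + (u_{\mu_h}, \div_h \tilde{\bsigma}_f). $$
The first term vanishes: plugging $\btau = \bsigma_{\mu_h}$ into \eqref{eq:lsf1}, summing, and using $\div \bsigma_{\mu_h}|_K = 0$ gives $(\mathcal{A}\tilde{\bsigma}_f, \bsigma_{\mu_h}) = 0$. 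The second equals $(f, u_{\mu_h})$ by \eqref{eq:lsf2} with $v_h = u_{\mu_h}$. Together these produce \eqref{eq:multiplier}.

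For the kernel characterization, $s(\mu_h, \mu_h) = 0$ is equivalent to $\bsigma_{\mu_h} = 0$ by the positive definiteness of $\mathcal{A}$. If $\bsigma_{\mu_h} = 0$, summing \eqref{eq:lsm1} globally leaves $(u_{\mu_h}, \div_h \btau_h) = \langle \mu_h, [\btau_h]\rangle_{\cF_h^i}$ for all $\btau_h \in \Sigma_{h,k+1}^{-1}$. Restricting to $\btau_h \in \Sigma_{h,k+1} \subset \Sigma_{h,k+1}^{-1}$ annihilates the jump term, and the surjectivity of $\div : \Sigma_{h,k+1} \to V_{h,k}$ granted by Lemma \ref{lm:infsup-high} (or Lemma \ref{lm:infsup-low}) then forces $u_{\mu_h} = 0$; feeding this back yields $\langle \mu_h, [\btau_h]\rangle = 0$ for all $\btau_h \in \Sigma_{h,k+1}^{-1}$, i.e.\ $\mu_h \in {\rm R}(\mathcal{C})^\bot$. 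Conversely, if $\mu_h \in {\rm R}(\mathcal{C})^\bot$, the summed version of \eqref{eq:lsm} has zero right-hand side and unisolvence (Lemma \ref{lm:loc_recover}) forces $(\bsigma_{\mu_h}, u_{\mu_h}) = 0$, hence $s(\mu_h,\mu_h) = 0$. The main subtlety is the forward direction of the kernel characterization: one has to first eliminate $u_{\mu_h}$ by testing against the conforming subspace $\Sigma_{h,k+1}$ before the jump integral can be shown to vanish on all of $\Sigma_{h,k+1}^{-1}$, and this is precisely where the earlier inf-sup stability feeds in.
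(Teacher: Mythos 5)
Your proposal is correct. Note, however, that the paper does not actually write this proof out: it dismisses the identity \eqref{eq:multiplier} as ``standard in the hybridization method'' with a citation to \cite{cockburn2009unified}, and obtains the kernel by observing that the multiplier system inherits the kernel of the full hybridized system, which was already identified as $\{0\}\times\{0\}\times\ker(\mathcal{C}^*)={\rm R}(\mathcal{C})^\perp$ in Theorem \ref{thm:hybrid-well-posed}. What you supply is the self-contained derivation behind both claims. Your chain $s(\lambda,\mu)=\langle\mu,[\bsigma_\lambda]\rangle_{\cF_h^i}$, the elimination via \eqref{eq:hybrid-systemC} and the cross-testing of \eqref{eq:lsm} against \eqref{eq:lsf} is exactly the standard argument the citation points to, and it checks out. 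For the kernel, your route is genuinely different in flavor from the paper's: rather than invoking Theorem \ref{thm:hybrid-well-posed}, you characterize $s(\mu_h,\mu_h)=0$ directly, and your observation that one must first test against the conforming subspace $\Sigma_{h,k+1}$ and use the surjectivity of $\div$ from Lemma \ref{lm:infsup-high}/\ref{lm:infsup-low} to kill $u_{\mu_h}$ before concluding $\mu_h\in{\rm R}(\mathcal{C})^\perp$ is precisely the right subtlety; this is the same mechanism hidden inside the uniqueness part of Theorem \ref{thm:hybrid-well-posed}. One small point worth making explicit in the converse direction: the reason $\mu_h\in{\rm R}(\mathcal{C})^\perp$ annihilates the right-hand side of each \emph{local} problem \eqref{eq:lsm} is that a local test function $\btau_h\in\cP_{k+1}(K;\mS)$ extends by zero to an element of $\Sigma_{h,k+1}^{-1}$ whose jump on $\partial K\cap\cF_h^i$ is $\btau_h\nu$, so $\langle\mu_h,\btau_h\nu\rangle_{\partial K}=\langle\mu_h,\mathcal{C}\btau_h\rangle_{\cF_h^i}=0$ element by element; with that said, unisolvence indeed gives $\bsigma_{\mu_h}=0$.
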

\begin{proof}
The derivation of \eqref{eq:multiplier} is standard in the
hybridization method (cf. \cite{cockburn2009unified}). The kernel of
the multiplier system is the same with the hybridized mixed system.
\end{proof}

\subsection{Norm Estimates}
We denote the linear operator corresponding to the bilinear form
$s(\cdot, \cdot)$ by $\mathcal{S}:M_{h,k+1}\mapsto M_{h,k+1}$, or $S:
M_{h,k+1} \mapsto M_{h,k+1}'$ as 
\begin{equation} \label{eq:operator-S} 
\langle S\lambda, \mu \rangle := \langle \mathcal{S}\lambda,
\mu \rangle_{\cF_h^i} := s(\lambda, \mu) \qquad \forall \lambda,
\mu \in M_{h,k+1}.
\end{equation} 
In fact, $\mathcal{S}$ is the Schur complement of the hybridized mixed
system \eqref{eq:hybrid-system}.  In light of Theorem
\ref{tm:hybrid-multiplier}, we can define a norm $\|\cdot\|_{S}$ on
$\mathrm{R}(\mathcal{C})$ as 
\begin{equation} \label{eq:norm-S}
\| \lambda \|_{S}^2 :=  \sum_{K\in\cT_h} \|\lambda\|_{S,K}^2  :=
\sum_{K\in\cT_h} (\mathcal{A} \bsigma_{\lambda},
\bsigma_{\lambda})_K \qquad \forall \lambda\in
\mathrm{R}(\mathcal{C}),
\end{equation}
which can also be extended as a semi-norm on $M_{h,k+1}$. For the
conciseness, we still denote the semi-norm on $M_{h,k+1}$ by
$\|\cdot\|_S$. 

To investigate how $\|\cdot\|_S$ depends on the parameters, we define
the following semi-norms locally:
\begin{align}
|\lambda|_{h, K} &:= \sup_{\btau \in Z_h(K)} \frac{\langle \lambda,
\btau \nu \rangle_{\partial K}}{\|\btau\|_{0,K}} \quad \qquad
\forall \lambda \in M_{h,k+1}, \label{eq:norm-h}\\
|\lambda|_{*, K} &:=|K|^{-1/2}\left| \int_{\partial K}  \lambda
\cdot \nu ~ds \right| \qquad \forall \lambda \in M_{h,k+1}.
\label{eq:norm-star}
\end{align}
Here, $Z_h(K) = \{\btau_h \in \mathcal{P}_{k+1}(K;\mS)~|~ \div \btau_h =
0\}$. The semi-norms  $|\cdot|_{h}$ and  $|\cdot|_{*}$ on $M_{h,k+1}$
are defined by the summations of local norms over all elements,
namely, 
$$
|\lambda|_{h}^2 = \sum_{K\in \cT_h} |\lambda|_{h, K}^2
\quad\text{and}\quad 
|\lambda|_{*}^2 = \sum_{K\in \cT_h} |\lambda|_{*, K}^2.
$$
The relationship between $\|\cdot\|_{S}$ and $|\cdot|_{*}$, $| \cdot |_{h}$ is
described in the following  lemma. 
\begin{theorem}\label{lm:equivalent-norm}
It holds that
\begin{equation} \label{eq:equivalent-energy-norm}
\| \lambda \|^2_{S, K} \eqsim 2\tilde{\mu} |\lambda|_{h,K}^2 +
\tilde{\lambda} | \lambda |_{*, K}^2    \qquad \forall \lambda \in
M_{h,k+1}.
\end{equation}
\end{theorem}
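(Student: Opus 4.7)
The plan is to characterize $\|\lambda\|_{S,K}^2$ by duality and then diagonalize the compliance tensor on deviatoric and volumetric modes. Equation \eqref{eq:lsm2} forces $\div\bsigma_\lambda=0$ on each $K$, so $\bsigma_\lambda|_K\in Z_h(K)$; choosing $\btau=\bsigma_\lambda$ in \eqref{eq:lsm1} gives $\|\lambda\|_{S,K}^2=\langle\lambda,\bsigma_\lambda\nu\rangle_{\partial K}$ and identifies $\bsigma_\lambda$ as the Riesz representer of $\btau\mapsto\langle\lambda,\btau\nu\rangle_{\partial K}$ on $Z_h(K)$ in the inner product $(\mathcal{A}\cdot,\cdot)_K$. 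Hence
\begin{equation*}
\|\lambda\|_{S,K}^2=\sup_{\btau\in Z_h(K)}\frac{\langle\lambda,\btau\nu\rangle_{\partial K}^2}{(\mathcal{A}\btau,\btau)_K},\qquad (\mathcal{A}\btau,\btau)_K=\tfrac{1}{2\tilde{\mu}}\|\btau^d\|_{0,K}^2+\tfrac{1}{n(2\tilde{\mu}+n\tilde{\lambda})}\|\tr\btau\|_{0,K}^2,
\end{equation*}
where $\btau^d:=\btau-\tfrac{1}{n}\tr(\btau)\bI$.

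Because $\bI\in Z_h(K)$, I would decompose each $\btau\in Z_h(K)$ uniquely as $\btau=\btau_0+c\bI$ with $c=\tfrac{1}{n|K|}\int_K\tr\btau$ and $\btau_0\in\tilde Z_h(K):=\{\beta\in Z_h(K):\int_K\tr\beta=0\}$. The trace cross-term vanishes, so $\|\tr\btau\|_{0,K}^2=\|\tr\btau_0\|_{0,K}^2+n^2c^2|K|$, giving the orthogonal splitting $(\mathcal{A}\btau,\btau)_K=(\mathcal{A}\btau_0,\btau_0)_K+\tfrac{nc^2|K|}{2\tilde{\mu}+n\tilde{\lambda}}$, while $\langle\lambda,\btau\nu\rangle_{\partial K}=\langle\lambda,\btau_0\nu\rangle_{\partial K}+c\int_{\partial K}\lambda\cdot\nu\,\rd s$. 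The elementary identity $\sup_{x,y\in\R}\tfrac{(ax+by)^2}{px^2+qy^2}=\tfrac{a^2}{p}+\tfrac{b^2}{q}$ then decouples the supremum exactly:
\begin{equation*}
\|\lambda\|_{S,K}^2=\sup_{\btau_0\in\tilde Z_h(K)}\frac{\langle\lambda,\btau_0\nu\rangle_{\partial K}^2}{(\mathcal{A}\btau_0,\btau_0)_K}+\tfrac{2\tilde{\mu}+n\tilde{\lambda}}{n}|\lambda|_{*,K}^2.
\end{equation*}

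The crux is a polynomial norm equivalence on $\tilde Z_h(K)$: I claim $\|\tr\btau_0\|_{0,K}\lesssim\|\btau_0^d\|_{0,K}$ uniformly in $K$. Injectivity of $\btau_0\mapsto\btau_0^d$ on $\tilde Z_h(K)$ is immediate, since $\btau_0^d=0$ gives $\btau_0=\tfrac{1}{n}\tr(\btau_0)\bI$, then $\div\btau_0=0$ forces $\tr\btau_0$ to be constant, and the mean-zero condition forces $\btau_0=0$. Equivalence of norms on a reference element combined with a shape-regular scaling argument then yields the claim with a uniform constant. Consequently $\|\btau_0\|_{0,K}^2=\|\btau_0^d\|_{0,K}^2+\tfrac{1}{n}\|\tr\btau_0\|_{0,K}^2\lesssim\|\btau_0^d\|_{0,K}^2\le 2\tilde{\mu}(\mathcal{A}\btau_0,\btau_0)_K$, and combined with $\langle\lambda,\btau_0\nu\rangle_{\partial K}\le|\lambda|_{h,K}\|\btau_0\|_{0,K}$ the decoupled supremum is bounded by $2\tilde{\mu}|\lambda|_{h,K}^2$.

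To close the upper bound, split $\tfrac{2\tilde{\mu}+n\tilde{\lambda}}{n}=\tfrac{2\tilde{\mu}}{n}+\tilde{\lambda}$; the $\tilde{\lambda}$ piece is the desired contribution, and the residual $\tfrac{2\tilde{\mu}}{n}|\lambda|_{*,K}^2$ is absorbed into $2\tilde{\mu}|\lambda|_{h,K}^2$ by testing the sup defining $|\lambda|_{h,K}$ against $\btau=\bI\in Z_h(K)$, which yields $|\lambda|_{h,K}\ge|\lambda|_{*,K}/\sqrt{n}$. For the reverse inequality, the displayed identity immediately gives $\|\lambda\|_{S,K}^2\ge\tilde{\lambda}|\lambda|_{*,K}^2$, while $(\mathcal{A}\btau,\btau)_K\le\tfrac{1}{2\tilde{\mu}}\|\btau\|_{0,K}^2$ gives $\|\lambda\|_{S,K}^2\ge 2\tilde{\mu}|\lambda|_{h,K}^2$. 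The main obstacle is the polynomial norm equivalence: injectivity is painless, but obtaining a constant independent of $K$ for the shape-regular family requires care, since the $\div$-constraint does not transform covariantly under affine pullback to a reference element.
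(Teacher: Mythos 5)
Your proof is correct and follows the same skeleton as the paper's: the same dual characterization of $\|\lambda\|_{S,K}$ via the Riesz representer $\bsigma_\lambda$ on $Z_h(K)$, the same splitting $\btau=\btau_0+m_\btau\bI$ with $\int_K\tr\btau_0=0$, and the same key inequality $\|\btau_0\|_{0,K}^2\lesssim 2\tilde{\mu}(\mathcal{A}\btau_0,\btau_0)_K$ for divergence-free, mean-trace-zero fields. Two local differences are worth recording. First, where the paper splits the supremum by the triangle inequality, you decouple it exactly with the identity $\sup_{x,y}(ax+by)^2/(px^2+qy^2)=a^2/p+b^2/q$, obtaining the identity $\|\lambda\|_{S,K}^2=\sup_{\btau_0\in\tilde Z_h(K)}\langle\lambda,\btau_0\nu\rangle_{\partial K}^2/(\mathcal{A}\btau_0,\btau_0)_K+\tfrac{2\tilde\mu+n\tilde\lambda}{n}|\lambda|_{*,K}^2$; this is sharper, makes the lower bound $\|\lambda\|_{S,K}^2\ge\tilde\lambda|\lambda|_{*,K}^2$ immediate, and — by explicitly absorbing the residual $\tfrac{2\tilde\mu}{n}|\lambda|_{*,K}^2$ via $|\lambda|_{*,K}\le\sqrt{n}\,|\lambda|_{h,K}$ — it repairs a step the paper glosses over (the paper bounds the $m_\btau\bI$ contribution by $\tilde\lambda^{1/2}|\lambda|_{*,K}$, whereas $\|m_\btau\bI\|_{\mathcal{A},K}$ actually produces the factor $\bigl(\tfrac{2\tilde\mu+n\tilde\lambda}{n}\bigr)^{1/2}$). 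Second, your "crux" inequality $\|\tr\btau_0\|_{0,K}\lesssim\|\btau_0^d\|_{0,K}$ is precisely Theorem \ref{thm:incompressible} (i.e.\ \eqref{eq:incompressible}) localized to $K$, which is how the paper disposes of it; your alternative finite-dimensional argument also works, and the scaling worry you raise is resolvable without pulling the divergence constraint back to a reference element: the constant $C(K)$ in $\|\tr\btau_0\|_{0,K}\le C(K)\|\btau_0^d\|_{0,K}$ is invariant under dilations and rigid motions of $K$, since both sides scale identically and the constraints $\div\btau_0=0$, $\int_K\tr\btau_0=0$, $\btau_0\in\cP_{k+1}(K;\mS)$ are preserved, so $C(K)$ depends only on the similarity class of $K$ and is uniformly bounded over shape-regular simplices by continuity and compactness (the dimension of $\tilde Z_h(K)$ being constant for nondegenerate simplices). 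So the proposal is sound; it is the paper's argument executed with more precision at the two points noted.
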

\begin{proof}
Notice that $\bsigma_{{\lambda}}|_{K} \in Z_h(K)$ by \eqref{eq:lsm2}.
Moreover, for any $\btau \in Z_h(K)$, by \eqref{eq:lsm1}, we have
\begin{equation}\label{eq:lam2sig}
(\mathcal{A} \bsigma_{{\lambda}}, \btau)_{K} = \langle
{\lambda}, \btau\nu \rangle_{\partial K}.
\end{equation} 
Let $m_{\btau} = \frac{1}{n|K|} \int_K \tr(\btau) dx$ and $\btau_0 =
\btau - m_{\btau}\bI$. Then $(\btau_0, \bI)_K=0$ and $(\mathcal{A} \btau_0,
\bI)_K=0$, which implies that 
$$   
(\mathcal{A}\btau, \btau)_K = (\mathcal{A}\btau_0, \btau_0)_K +
(\mathcal{A}m_{\btau} \bI, m_{\btau} \bI)_K. 
$$
Let $\|\btau\|_{\mathcal{A},K}:= (\mathcal{A}\btau, \btau)_K^{1/2}$
for any $\btau \in L^2(K;\mS)$.  In light of \eqref{eq:lam2sig} and
\eqref{eq:incompressible}, we have for any $\lambda \in M_{h,k+1}$,
$$
\begin{aligned}
\|{\lambda}\|_{S, K} &= \sup_{\btau \in Z_h(K)}
\frac{(\mathcal{A}\bsigma_{\lambda},
\btau)_K}{\|\btau\|_{\mathcal{A},K}}  
= \sup_{\btau \in Z_h(K)} \frac{\langle \lambda, \btau
  \nu\rangle_{\partial K}}{\|\btau\|_{\mathcal{A},K}} \\  
&\le \sup_{\btau \in Z_h(K)} \frac{\langle \lambda, \btau_0
  \nu\rangle_{\partial K}}{\|\btau\|_{\mathcal{A},K}}
  + \sup_{\btau \in Z_h(K)} \frac{\langle \lambda, m_{\btau} \bI
    \nu\rangle_{\partial K}}{\|\btau\|_{\mathcal{A},K}} \\  
&= \sup_{\btau \in Z_h(K)} \frac{\langle \lambda, {\btau}_0
  \nu\rangle_{\partial K}}{\|\btau_0\|_{\mathcal{A},K}} +
  \sup_{\btau \in Z_h(K)} \frac{\langle \lambda, m_{\btau} \bI
  \nu\rangle_{\partial K}}{\|m_{\btau} \bI\|_{\mathcal{A},K}} \\  
&\lesssim ( 2\tilde{\mu})^{1/2} \sup_{\btau \in Z_h(K)} \frac{\langle
  \lambda, {\btau}_0 \nu\rangle_{\partial K}}{\| {\btau}_0 \|_{0,K}} +
  \tilde{\lambda }^{1/2}|\lambda|_{*,K} \\  
&\lesssim (2\tilde{\mu})^{1/2} |\lambda|_{h,K} + \tilde{\lambda
}^{1/2}|\lambda|_{*,K}.
\end{aligned}
$$

On the other hand, since $2\tilde{\mu} (\mathcal{A}\btau, \btau)_K
\lesssim \|\btau\|_{0, K}^2$ by the definition of $\mathcal{A}$, we
have
$$
 (2\tilde{\mu})^{1/2}|\lambda|_{h,K} = (2\tilde{\mu})^{1/2}\sup_{\btau
 \in Z_h(K)} \frac{\langle \lambda, {\btau} \nu\rangle_{\partial
 K}}{\| {\btau} \|_{0,K}} \lesssim  \sup_{\btau \in Z_h(K)}
 \frac{\langle \lambda, \btau \nu\rangle_{\partial
 K}}{\|\btau\|_{\mathcal{A},K}} = \|{\lambda}\|_{S, K}.
$$
Moreover, we have $(\mathcal{A} \bsigma_{{\lambda}},\bI )_{K} = \langle
{\lambda}, \bI\nu \rangle_{\partial K}$ from \eqref{eq:lsm1}. By
the Cauchy-Schwarz inequality, we have 
$$
\begin{aligned}
\tilde{\lambda }^{1/2}|\lambda|_{*,K} & =
\tilde{\lambda}^{1/2}|K|^{-1/2}|\langle \lambda, \bI
\nu\rangle_{\partial K}| 
= \tilde{\lambda}^{1/2}|K|^{-1/2}|(\mathcal{A}\bsigma_{\lambda},
\bI)_K| \\
& \leq \tilde{\lambda}^{1/2}|K|^{-1/2}\|\lambda\|_{S,K}
(\mathcal{A}\bI, \bI)_K^{1/2} \leq \|{\lambda}\|_{S, K}.
\end{aligned}
$$
This completes the proof.
\end{proof}

Next, we estimate the condition number of $S$. The $L^2$ norm for
$M_{h,k+1}$ is denoted by 
$$ 
\|\lambda\|_{0}^2 := \sum_{F\in \cF_h^i} \|\lambda\|_{0, F}^2
:=\sum_{F\in \cF_h^i} \langle \lambda, \lambda \rangle_{F}. 
$$

\begin{lemma}\label{lm:L2-upper}
It holds that 
\begin{equation} \label{eq:L2-upper}
 \| \lambda \|_{S}^2 \lesssim  (2\tilde{\mu}+ \tilde{\lambda})
  h^{-1}\|\lambda\|_0^2 \qquad \forall \lambda \in M_{h,k+1}.
\end{equation} 
\end{lemma}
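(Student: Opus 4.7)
The plan is to use Theorem \ref{lm:equivalent-norm} to reduce the claim to bounding the two local semi-norms $|\lambda|_{h,K}$ and $|\lambda|_{*,K}$ by a multiple of $h_K^{-1/2}\|\lambda\|_{0,\partial K}$. Once each local semi-norm is controlled in this way, summation over $K\in\cT_h$, together with the fact that each interior face lies in at most two elements and that $\lambda$ vanishes on boundary faces, completes the argument.

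First, I would handle $|\lambda|_{h,K}$. By Cauchy--Schwarz on $\partial K$,
\begin{equation*}
\langle \lambda, \btau\nu\rangle_{\partial K} \le \|\lambda\|_{0,\partial K} \|\btau\nu\|_{0,\partial K},
\end{equation*}
and a standard polynomial trace/scaling inequality on the shape-regular simplex $K$ gives $\|\btau\nu\|_{0,\partial K} \lesssim h_K^{-1/2}\|\btau\|_{0,K}$ for all $\btau\in\cP_{k+1}(K;\mS)$. Dividing by $\|\btau\|_{0,K}$ and taking the supremum over $Z_h(K)$ yields $|\lambda|_{h,K} \lesssim h_K^{-1/2}\|\lambda\|_{0,\partial K}$.

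Next, for $|\lambda|_{*,K}$, I would estimate $|\int_{\partial K}\lambda\cdot\nu\, ds| \le |\partial K|^{1/2}\|\lambda\|_{0,\partial K}$ by Cauchy--Schwarz, and combine this with the shape-regular scaling $|\partial K|^{1/2}|K|^{-1/2} \lesssim h_K^{-1/2}$. Hence $|\lambda|_{*,K} \lesssim h_K^{-1/2}\|\lambda\|_{0,\partial K}$. Plugging both bounds into Theorem \ref{lm:equivalent-norm} yields
\begin{equation*}
\|\lambda\|_{S,K}^2 \lesssim (2\tilde\mu+\tilde\lambda)\, h_K^{-1}\|\lambda\|_{0,\partial K}^2.
\end{equation*}

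Finally, summing over $K$, using quasi-uniformity $h_K^{-1}\lesssim h^{-1}$, and noting that each face $F\in\cF_h^i$ appears in exactly two boundary sums $\partial K$ while boundary faces contribute zero (because $\mu_h|_{\cF_h^\partial}=0$ in the definition of $M_{h,k+1}$), we get $\sum_K \|\lambda\|_{0,\partial K}^2 \le 2\|\lambda\|_0^2$, which gives the stated estimate. No step seems genuinely hard here; the main point I would take care with is verifying the polynomial trace inequality rigorously by a reference-element scaling argument, since $\btau$ ranges over the finite-dimensional space $Z_h(K)$ of divergence-free symmetric tensor polynomials rather than general $H(\div)$ fields.
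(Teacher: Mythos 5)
Your proposal is correct and follows exactly the route the paper intends: it invokes the norm equivalence of Theorem \ref{lm:equivalent-norm}, bounds $|\lambda|_{h,K}$ and $|\lambda|_{*,K}$ via Cauchy--Schwarz and a standard scaling/trace argument, and sums over elements. The paper's own proof is a one-line citation of precisely these ingredients, so your write-up is simply a fully detailed version of the same argument.
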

\begin{proof}
The upper bound of $S$ follows from the equivalent norm
\eqref{eq:equivalent-energy-norm}, Cauchy-Schwarz inequality and
standard scaling argument.
\end{proof}

The lower bound of $S$ depends on the singularity of the grids. In
light of \cite{scott1985norm}, we define a quantity to measure the
vertex singularity. The rest estimates are focused on the case of 
spatial dimension $n=2$. For a vertex $a \in \cN_h$, let $\theta_i,
1\le i\le m$ be the angles of the triangle $K_i$ meeting at $a$ (triangles are numbered consecutively). If $a$ is an internal
vertex, we define
$$
\kappa(a) := \max \{ |\theta_i + \theta_j - \pi|~\big|~1\le i, j\le
m~\text{and}~ {i-j=1\mod m} \};
$$
If $a$ is a boundary vertex, $\kappa(a)$ is defined in the same way
without the modulo operation.
We further set
$$
\kappa = \min_{a \in \cN_h } \kappa(a).
$$

\begin{figure}[htbp!]
\centering\includegraphics[scale =.7]{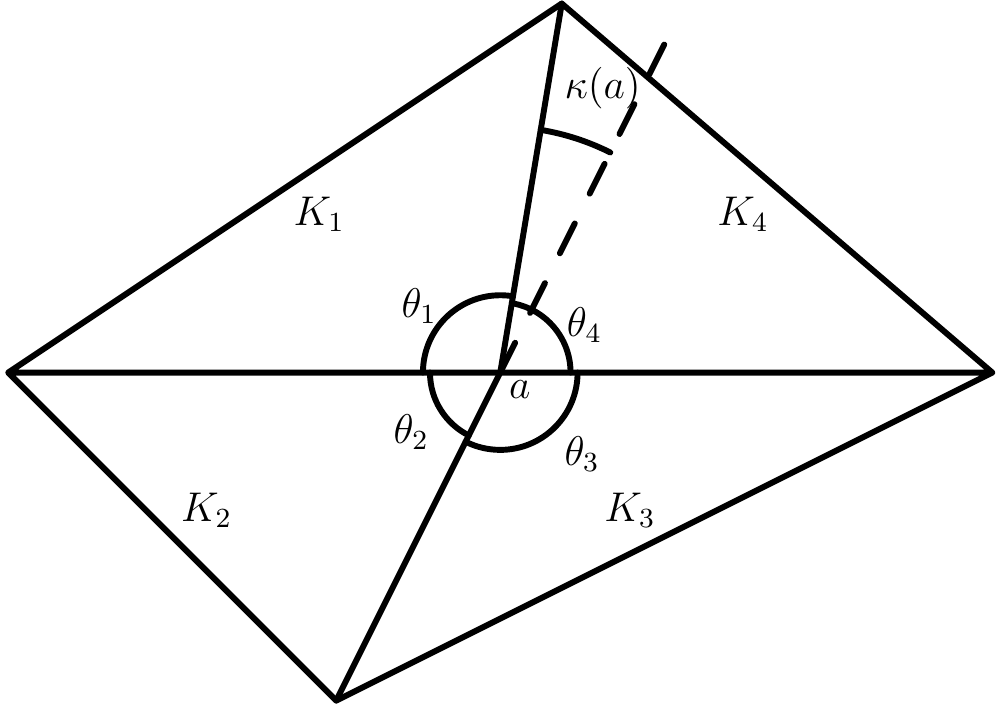}
\caption{Nearly singular vertex.}
\label{fig:nearly-singular-vertex}
\end{figure}
In the following, we assume that $\kappa\ge \kappa_0 >0$, where
$\kappa_0$ is a positive constant independent of $h$. That is, there
is no singular or nearly singular vertex on $\cT_h$. 

%%% L2-lower order %%

\begin{lemma}\label{lm:stability-local-jump}
Assume that $\kappa\ge \kappa_0 >0$. For any local basis function
$\varphi_i$ of $\mathrm{R}(\mathcal{C})$ (see Lemma
\ref{lm:local-basis}), there exists a locally supported $\btau_i\in
\Sigma_{h,k+1}^{-1}$ such that 
$$
[\btau_i]|_{F} = \varphi_i|_F \quad\forall F\in \cF_h, \quad \text{and}
\quad \|\btau_i\|_0^2 \lesssim \frac{h}{ \sin^2(\kappa_0)} \|\varphi_i
\|_{0}^2.
$$
\end{lemma}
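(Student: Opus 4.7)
The plan is to construct $\btau_i$ element by element on the support of $\varphi_i$, prescribing its outward normal trace on each edge and then invoking a standard trace-lifting to produce an element of $\cP_{k+1}(K;\mS)$. On each triangle $K$ meeting $\mathrm{supp}\,\varphi_i$ I would set $g_{K,F} := (\btau_i|_K)\nu_F^{\mathrm{out}}$, requiring $g_{K^+,F}+g_{K^-,F}=\varphi_i|_F$ on interior edges inside the support and $g_{K,F}=0$ on all edges outside $\mathrm{supp}\,\varphi_i$; the latter enforces locality of $\btau_i$.

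At every corner $a'$ of every such $K$, the symmetry of $\btau_i(a')$ forces the scalar compatibility
\[
\nu_2\cdot g_{K,F_1}(a') = \nu_1\cdot g_{K,F_2}(a'),
\]
where $F_1,F_2$ are the edges of $K$ meeting at $a'$ with outward unit normals $\nu_1,\nu_2$. This is the only obstruction to extending prescribed normal traces to an element of $\cP_{k+1}(K;\mS)$. Propagating these relations around each vertex $a$ in $\mathrm{supp}\,\varphi_i$ yields a small linear system in the unknown splittings $\{g_{K,F}\}$; its coefficient matrix consists of inner products of consecutive edge normals, and the hypothesis $\kappa(a)\ge\kappa_0$ furnishes at least one consecutive pair of triangles at $a$ whose opposite edges are not collinear, giving a pivot of magnitude $\sim\sin(\kappa_0)$. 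Routing the inhomogeneity through this good pair produces a compatible splitting with $\sum_F\|g_{K,F}\|_{0,F}^2 \lesssim \sin^{-2}(\kappa_0)\,\|\varphi_i\|_0^2$.

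With compatible traces in hand, a standard BDM-type trace lifting for symmetric matrix-valued polynomials then produces $\btau_i|_K\in\cP_{k+1}(K;\mS)$ realizing these traces with $\|\btau_i\|_{0,K}^2 \lesssim h\sum_{F\subset\partial K}\|g_{K,F}\|_{0,F}^2$, where the factor $h$ is the usual shape-regular scaling from an edge-supported polynomial to a polynomial on a triangle of diameter $h$ (a unit-reference construction followed by pullback yields $\|\btau\|_{0,K}^2 \sim h^n\|\hat\btau\|_0^2$ versus $\|g\|_{0,\partial K}^2\sim h^{n-1}\|\hat g\|_0^2$). Summing over $K\in\omega_a$ then delivers the advertised estimate $\|\btau_i\|_0^2 \lesssim h\sin^{-2}(\kappa_0)\,\|\varphi_i\|_0^2$.

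The principal obstacle is the second step: the vertex-compatibility linear algebra. The hypothesis $\kappa(a)\ge\kappa_0$ only guarantees \emph{one} non-degenerate consecutive pair of triangles at each vertex, while several other pairs at $a$ may be arbitrarily close to the singular configuration $\theta_i+\theta_{i+1}=\pi$; the challenge is to choose a splitting that concentrates all of the "charge" through the single good pivot so that the estimate degrades only by one factor of $\sin^{-1}(\kappa_0)$ rather than by combinatorial blow-up across a high-valence star. Boundary vertices and basis functions whose supports span more than one vertex introduce only bookkeeping once the corner linear algebra is pinned down.
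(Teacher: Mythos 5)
Your setup is sound and close in spirit to the paper's: the corner condition $\nu_2\cdot g_{K,F_1}(a')=\nu_1\cdot g_{K,F_2}(a')$ is indeed the compatibility forced by symmetry in 2D, and the factor $\sin^{-1}(\kappa_0)$ must enter only through the single good consecutive pair at each vertex. But the step you yourself flag as ``the principal obstacle'' --- producing a compatible splitting around the vertex star that loses only one power of $\sin^{-1}(\kappa_0)$ --- is precisely the content of the lemma, and you supply no mechanism for it. As written, ``routing the inhomogeneity through this good pair'' is an aspiration, not an argument: a naive solve of your global linear system in the $\{g_{K,F}\}$ would pivot on inner products of possibly near-collinear normals at the bad pairs, which is exactly the blow-up you worry about.

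The paper's construction shows how to avoid ever solving a two-edge matching problem except once. Writing $\varphi_i=\omega\varphi_a$ with $\varphi_a$ a scalar Lagrange basis function and taking $\btau_i|_{K_j}=\varphi_a^{K_j}T_j$ with \emph{constant} $T_j\in\mS$, one marches around the star starting from the edge opposite the good pair and imposes only a single-normal condition $T_j\nu_{F_{j+1}}=\omega|_{F_{j+1}}+T_{j+1}\nu_{F_{j+1}}$ at each step; this is always solvable with $\|T\|_{l^2}\le\sqrt2\,\|w\|_{l^2}/\|v\|_{l^2}$ (for any $v\ne0$ there is $T\in\mS$ with $Tv=w$), so no angle enters and the norms accumulate over only the $O(1)$ triangles of the star. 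The loop is then closed on the one remaining edge $F_2$, between the good pair $K_1,K_2$, by rank-one tangential corrections $\tilde T_1=c_1t_1t_1^T$, $\tilde T_2=c_2t_3t_3^T$ with $t_1\parallel F_1$, $t_3\parallel F_3$; these leave the already-enforced jumps on $F_1,F_3$ untouched since $tt^T\nu_F=0$ when $t\parallel F$, and the $2\times2$ system for $(c_1,c_2)$ has matrix $(t_1,t_3)$ with $|\det(t_1,t_3)|=|\sin(\theta_1+\theta_2)|\ge\sin(\kappa_0)$, which is the sole source of the $\sin^{-2}(\kappa_0)$ in the $L^2$ bound. This routing device (or an equivalent) is what your proposal is missing. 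A secondary gap: you assert without proof that corner compatibility is the \emph{only} obstruction to lifting prescribed normal traces into $\cP_{k+1}(K;\mS)$; that surjectivity is true but is itself a lemma, and the paper sidesteps it entirely by using tensors of the form $\varphi_a^K T$ with constant $T$, whose traces are immediate.
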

\begin{proof}
The detail proof for above lemma will be given in the appendix.
\end{proof}

\begin{lemma}\label{lm:stableJ}
Assume that $\kappa\ge \kappa_0 >0$. For any $\lambda \in M_{h,k+1}$, there
exists $\btau \in \Sigma_{h,k+1}^{-1}$ such that 
\begin{equation} \label{eq:stableJ}
 [\btau]|_F = {\lambda}|_F \quad \forall F\in \cF_h, \quad \text{and}
 \quad \|\btau\|_{0}^2 \lesssim \frac{h}{\sin^{2}(\kappa_0)}
 \|{\lambda}\|_{0}^2. 
\end{equation}
\end{lemma}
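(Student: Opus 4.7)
The plan is to assemble the desired $\btau$ from a local basis expansion of $\lambda$ combined with the single-basis lifts provided by Lemma \ref{lm:stability-local-jump}. First I would observe that the assumption $\kappa \ge \kappa_0 > 0$ rules out singular interior vertices, since a singular vertex forces every consecutive pair of angles to sum to $\pi$ and hence $\kappa(a)=0$. Therefore, by Lemma \ref{lm:local-basis}, $N_2 = 0$ and $\mathrm{R}(\mathcal{C}) = M_{h,k+1}$, so every $\lambda \in M_{h,k+1}$ admits an expansion
$$
\lambda = \sum_{i=1}^{N_1} c_i \varphi_i
$$
in the locally supported basis $\{\varphi_i\}$ from Lemma \ref{lm:local-basis}, whose mass matrix is well-conditioned in the sense of \eqref{eq:basis-L2-stability}.

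Next, for each $i$ I would invoke Lemma \ref{lm:stability-local-jump} to obtain a locally supported $\btau_i \in \Sigma_{h,k+1}^{-1}$ such that $[\btau_i]|_F = \varphi_i|_F$ for every $F \in \cF_h$, together with
$$
\|\btau_i\|_0^2 \lesssim \frac{h}{\sin^2(\kappa_0)} \|\varphi_i\|_0^2.
$$
Setting $\btau := \sum_{i=1}^{N_1} c_i \btau_i \in \Sigma_{h,k+1}^{-1}$ and using the linearity of the jump operator $\mathcal{C}$, I obtain $[\btau]|_F = \sum_i c_i [\btau_i]|_F = \lambda|_F$ on every face, which verifies the first requirement in \eqref{eq:stableJ}.

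It then remains to control $\|\btau\|_0$. Since each $\btau_i$ is supported on a patch of fixed combinatorial size (by shape regularity of $\cT_h$), each element $K \in \cT_h$ meets only a uniformly bounded number of these patches. This finite-overlap property gives
$$
\|\btau\|_0^2 \lesssim \sum_{i=1}^{N_1} c_i^2 \|\btau_i\|_0^2 \lesssim \frac{h}{\sin^2(\kappa_0)} \sum_{i=1}^{N_1} c_i^2 \|\varphi_i\|_0^2,
$$
and then the $L^2$ stability \eqref{eq:basis-L2-stability} converts $\sum_i c_i^2 \|\varphi_i\|_0^2$ into $\|\lambda\|_0^2$, yielding \eqref{eq:stableJ}.

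The main obstacle is not in this assembly step, which is essentially bookkeeping, but rather in the ingredients it relies on: namely, that Lemma \ref{lm:local-basis} actually provides a \emph{locally supported and $L^2$-stable} basis of $\mathrm{R}(\mathcal{C})$ whenever no singular vertex is present, and that Lemma \ref{lm:stability-local-jump} produces single-basis lifts with the correct $\sin^{-2}(\kappa_0)$ dependence. The latter is where the vertex-angle geometry enters quantitatively, and that is the genuinely technical part (deferred here to the appendix); once those two lemmas are in hand, the present result follows cleanly by linear combination and a standard finite-overlap argument.
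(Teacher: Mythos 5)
Your proposal is correct and follows essentially the same route as the paper's proof: expand $\lambda$ in the locally supported, $L^2$-stable basis of $\mathrm{R}(\mathcal{C})=M_{h,k+1}$ from Lemma \ref{lm:local-basis}, lift each basis function via Lemma \ref{lm:stability-local-jump}, and sum, using finite overlap and \eqref{eq:basis-L2-stability} to control the norm. Your explicit mention of the finite-overlap argument is a small presentational improvement over the paper, which leaves it implicit.
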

\begin{proof}
Since $\kappa\geq \kappa_0>0$, by Lemma \ref{lm:local-basis}, we have
$M_{h,k+1} = \mathrm{R}(\mathcal{C})$ and there exists a local basis
that satisfies \eqref{eq:basis-L2-stability}. Therefore, any $\lambda \in
M_{h,k+1}$ can be uniquely expressed as 
$$
\lambda = \sum_{i=1}^{N_1} c_i \varphi_i \quad \text{and} \quad
\sum_{i=1}^{N_1} \|c_i \varphi_i\|_{0}^2 \eqsim \|\lambda\|_{0}^2.
$$
By virtue of Lemma \ref{lm:stability-local-jump}, there exists a
locally supported $ \btau_i \in \Sigma_{h,k+1}^{-1} $ for each basis
function $\varphi_i$ of $\mathrm{R}(\mathcal{C})$, such that 
$$
[\btau_i]|_F= \varphi_i|_F  \quad \forall F\in \cF_h, \quad \text{and}
\quad \|\btau_i\|_0^2 \lesssim h\sin^{-2}(\kappa_0)
\|\varphi_i\|_{0}^2. 
$$
Therefore, $\btau = \sum_{i=1}^{N_1} c_i{\btau_i}$ satisfies
$[\btau]|_F = \lambda$ and 
$$ 
\|\btau\|_0^2 \lesssim \sum_{i=1}^{N_1} c_i^2 \|\btau_i\|_0^2 \lesssim
h\sin^{-2}(\kappa_0) \sum_{i=1}^{N_1} c_i^2 \|\varphi_i\|_0^2 \eqsim
h\sin^{-2}(\kappa_0) \|\lambda\|_0^2. 
$$ 
This completes the proof.
\end{proof}

%% boundedness %% 
\begin{lemma}\label{lm:L2-lower}
Assume that $\kappa\ge \kappa_0 >0$. It holds that 
\begin{equation} \label{eq:L2-lower}
2\tilde{\mu} h \sin^{2}(\kappa_0)\|\lambda\|_0^2  ~ \lesssim
~\|\lambda \|_{S}^2 \qquad \forall \lambda \in M_{h,k+1}.
\end{equation}
\end{lemma}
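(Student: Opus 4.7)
The plan is to combine Lemma \ref{lm:stableJ} with the local saddle-point equation \eqref{eq:lsm1} to express $\|\lambda\|_0^2$ as a volume pairing and then bound each piece by $\|\lambda\|_S$. First, Lemma \ref{lm:stableJ} supplies $\btau \in \Sigma_{h,k+1}^{-1}$ with $[\btau]|_F = \lambda|_F$ for every $F \in \cF_h$ and $\|\btau\|_0^2 \lesssim h\sin^{-2}(\kappa_0)\|\lambda\|_0^2$. Writing $\|\lambda\|_0^2 = \sum_{F\in\cF_h^i}\langle\lambda,[\btau]\rangle_F = \sum_{K\in\cT_h}\langle\lambda,\btau\nu\rangle_{\partial K}$ and invoking \eqref{eq:lsm1} element by element with the test function $\btau|_K \in \cP_{k+1}(K;\mS)$, I obtain the identity $\|\lambda\|_0^2 = (\mathcal{A}\bsigma_\lambda,\btau) + (u_\lambda,\div_h\btau)$.

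For the first term, Cauchy--Schwarz combined with the bound $\|\btau\|_{\mathcal{A}}^2 \le (2\tilde\mu)^{-1}\|\btau\|_0^2$ (immediate from the definition of $\mathcal{A}$) gives $|(\mathcal{A}\bsigma_\lambda,\btau)| \le (2\tilde\mu)^{-1/2}\|\lambda\|_S\|\btau\|_0$. For the second term, a standard inverse inequality yields $\|\div_h\btau\|_0 \lesssim h^{-1}\|\btau\|_0$, so matters reduce to controlling $\|u_\lambda\|_0$ by $\|\lambda\|_S$.

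The decisive step is this estimate on $\|u_\lambda\|_0$, and it is where the global inf-sup condition enters. I would use Lemma \ref{lm:infsup-high} or \ref{lm:infsup-low} to choose a conforming $\btau_v \in \Sigma_{h,k+1}$ satisfying $\div\btau_v = u_\lambda$ and $\|\btau_v\|_0 \lesssim \|u_\lambda\|_0$. Because $\btau_v \in H(\div,\Omega;\mS)$ has continuous normal traces, $[\btau_v] \equiv 0$ across interior faces; summing \eqref{eq:lsm1} tested against $\btau_v|_K$ over all $K$ therefore kills the boundary data, leaving $(\mathcal{A}\bsigma_\lambda,\btau_v) + (u_\lambda,u_\lambda) = 0$. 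Cauchy--Schwarz then gives $\|u_\lambda\|_0^2 \le (2\tilde\mu)^{-1/2}\|\lambda\|_S\|\btau_v\|_0$, whence $\|u_\lambda\|_0 \lesssim (2\tilde\mu)^{-1/2}\|\lambda\|_S$.

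Assembling the pieces: $\|\lambda\|_0^2 \lesssim (2\tilde\mu)^{-1/2}\|\lambda\|_S(\|\btau\|_0 + h^{-1}\|\btau\|_0) \lesssim (2\tilde\mu)^{-1/2} h^{-1}\|\btau\|_0 \|\lambda\|_S$ for small $h$, and substituting $\|\btau\|_0 \lesssim h^{1/2}\sin^{-1}(\kappa_0)\|\lambda\|_0$ yields $\|\lambda\|_0 \lesssim (2\tilde\mu)^{-1/2} h^{-1/2}\sin^{-1}(\kappa_0)\|\lambda\|_S$, which is the claimed inequality after squaring and rearranging. The part I expect to be the main technical obstacle is the $\|u_\lambda\|_0$ bound: it is where the element-wise saddle-point structure of $(\bsigma_\lambda,u_\lambda)$ has to be reconciled with the global inf-sup from the conforming Hu--Zhang-type space, and it is the reason the proof quietly relies on the hypotheses of Lemma \ref{lm:infsup-high} or \ref{lm:infsup-low} in addition to the no-nearly-singular-vertex assumption $\kappa \ge \kappa_0 > 0$ already used in Lemma \ref{lm:stableJ}.
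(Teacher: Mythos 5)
Your proof is correct, and it rests on the same two pillars as the paper's: the jump-lifting of Lemma \ref{lm:stableJ} and the discrete inf-sup condition of Lemma \ref{lm:infsup-high}/\ref{lm:infsup-low}. The one genuine difference is organizational. The paper applies the inf-sup condition to $-\div_h\btau_1$ to produce a conforming correction $\btau_2$ with $\div\btau_2=-\div_h\btau_1$, so that the combined test function $\btau=\btau_1+\btau_2$ is piecewise divergence-free and the term $(u_\lambda,\div_h\btau)$ vanishes identically; the displacement $u_\lambda$ never has to be estimated. You instead keep that term and apply the inf-sup condition to $u_\lambda$ itself, obtaining a conforming $\btau_v$ with $\div\btau_v=u_\lambda$ and deducing $\|u_\lambda\|_0\lesssim(2\tilde{\mu})^{-1/2}\|\lambda\|_S$ from the vanishing of $\langle\lambda,[\btau_v]\rangle_{\cF_h^i}$. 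Both uses of the inf-sup condition are legitimate (note $u_\lambda\in V_{h,k}$, so the lemma applies), and your bookkeeping with the extra factor $h^{-1}$ from the inverse inequality lands on the same final constant $ (2\tilde{\mu}h)^{-1/2}\sin^{-1}(\kappa_0)$. What the paper's version buys is a slightly shorter argument with no intermediate displacement estimate; what yours buys is the byproduct bound $\|u_\lambda\|_0\lesssim(2\tilde{\mu})^{-1/2}\|\lambda\|_S$, which is of independent interest since it controls the locally reconstructed displacement by the multiplier energy. You correctly identified that the lemma quietly relies on the stability hypotheses of Lemma \ref{lm:infsup-high} or \ref{lm:infsup-low} in addition to $\kappa\ge\kappa_0>0$ --- the paper's proof has the same implicit dependence through its use of ``the discrete inf-sup condition.''
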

\begin{proof} 
By virtue of Lemma \ref{lm:stableJ}, for any $\lambda \in
M_{h,k+1}$, there exists $\btau_1 \in \Sigma_{h,k+1}^{-1}$ such
that 
$$ 
\mathcal{C} \btau_1 = {\lambda} \quad \text{and} \quad \|\btau_1\|_{0}
\lesssim h^{1/2} \sin^{-1}(\kappa_0)\|\lambda\|_{0}.
$$ 
Applying the discrete inf-sup condition, there exists $\btau_2 \in
\Sigma_{h,k+1}$ such that 
$$ 
\div\btau_2 = -\div_h \btau_1 \quad \text{and} \quad
\|\btau_2\|_{H(\div)} \lesssim \|\div_h \btau_1\|_0 \lesssim h^{-1}
\|\btau_1\|_0 \lesssim h^{-1/2}\sin^{-1}(\kappa_0)\|{\lambda}\|_{0}. 
$$
Let $\btau = \btau_1 + \btau_2$. Thus, $\div_h\btau = 0$ and
$\mathcal{C}\btau = {\lambda}$.  By summation of \eqref{eq:lsm1} over
all elements and choosing above $\btau$ as a testing function, we have
$$ 
\|\lambda\|_{0}^2 = (\mathcal{A} \bsigma_{\lambda}, \btau) \leq
(\mathcal{A} \bsigma_{\lambda},
\bsigma_{\lambda})^{1/2} (2\tilde{\mu})^{-1/2} \|\btau\|_{0}  \lesssim
(2\tilde{\mu}h)^{-1/2}\sin^{-1}(\kappa_0) \|\lambda\|_{S}
\|\lambda\|_{0},
$$ 
which implies \eqref{eq:L2-lower}.
\end{proof}

Lemmas \ref{lm:L2-upper} and \ref{lm:L2-lower} imply the following
condition number estimate:
\begin{equation} \label{eq:cond-S}
\mathrm{cond}(S) \lesssim \frac{2\tilde{\mu}+
  \tilde{\lambda}}{2\tilde{\mu}} h^{-2}\sin^{-2}(\kappa_0).
\end{equation}
For the nearly incompressible material, $\tilde{\lambda}$ would be
sufficient large, which makes the multiplier system
\eqref{eq:multiplier} nearly singular.

%% end of file %%

\section{Multilevel Solvers for the Hybridized Mixed Methods} \label{sec:schwarz}
In this section, we shall describe several multilevel solvers for the
hybridized mixed methods for the 2D case.  We further assume that
$\kappa\ge \kappa_0>0$, which is guaranteed when the grid has no
singular or nearly singular vertex.

\subsection{Two-level and multilevel solvers}
First, we present the two-level solvers.  We consider an overlapping
decomposition $\{\Omega_i\}_{i=1}^J$, where $\Omega_i$ are open
subdomains of $\Omega$.  Let $\cT_H$ be a coarse grid for $\Omega$,
and $\cT_h$ be a subdivision of $\cT_H$ such that $\cT_h$ is aligned
with each $\partial \Omega_i$.  We assume that there exist nonnegative
$C^\infty$ functions $\theta_1, \theta_2, \cdots, \theta_J$ in $\R^2$
such that 
\begin{subequations}
\begin{align}
& \theta_i = 0 \qquad \text{on}~\Omega\setminus\Omega_i,
  \label{equ:unity-support} \\ 
& \sum_{i=1}^J \theta_i = 1 \qquad \text{on}~\bar{\Omega}, 
  \label{equ:unity-sum} \\ 
& \|\nabla \theta_i\|_{\infty} \lesssim \delta^{-1}. 
  \label{equ:unity-derivative}
\end{align}
\end{subequations}
Here, $\delta > 0$ is a parameter that measures the overlap
among the subdomains. We also assume that there exists an integer
$N_c$ independent of $h$, $\delta$, and $J$ such that any point in
$\Omega$ belongs to at most $N_c$ subdomains. 
%%In particular,
%%$\{\Omega_i\}_{i=1}^J$ can be constructed by enlarging the
%%elements of $\cT_H$ by the amount $\delta$ so that each $\Omega_i$
%%is the union of elements in $\cT_h$.  
The local space associated with subdomain $\Omega_i$ is denoted by
\begin{equation} \label{eq:local-space-M}
M_i := \{\lambda \in M_{h,k+1} ~|~ \lambda|_{F} = 0, \text{ for any 
face } F\in \Omega\backslash \Omega_i\}.
\end{equation} 
We can then define $S_i: M_i \mapsto M_i'$ and bilinear form on $M_i$ by 
$$ 
\langle S_i \lambda_i, \mu_i \rangle := s_i(\lambda_i, \mu_i) :=
s(\iota_i \lambda_i, \iota_i \mu_i),
$$ 
where $\iota_i:M_i \hookrightarrow M_{h,k+1}$ denotes the inclusion
operator.

In light of the multigrid method on the primal elasticity problem by
Sch{\"o}berl \cite{schoberl1999multigrid}, we choose the continuous
and piecewise quadratic finite element space as the coarse space
$$
W_H : = \{ w \in H_0^1(\Omega; \R^2)~|~ w|_K \in \cP_2(K;\R^2) \quad
\forall  K\in \cT_H \}.
$$
Suppose the coarse space $W_H$ is connected to $M_{h,k+1}$ by an
injective intergrid operator $I_H^h: W_H \mapsto M_{h,k+1}$ (The
construction of $I_H^h$ will be given in Section
\ref{subsec:intergrid}). We will impose the primal elastic norm  
$\|\cdot\|_{A_H}$ on $W_H$, where $A_H: W_H \mapsto W_H'$ and the
bilinear form $a_H(\cdot, \cdot)$ are defined as
\begin{equation} \label{eq:elastic-norm}
\begin{aligned}
\langle A_H w_H, v_H \rangle & := a_H(w_H, v_H) \\ 
& :=  2\tilde{\mu} ( \bepsilon(w_H), \bepsilon(v_H) ) + \tilde{\lambda}(
    P_0^H \div w_H, P_0^H\div v_H ) \qquad \forall w_H, v_H \in W_H, \\
\|w_H\|_{A_H}^2 &:= a_H(w_H, w_H) \qquad \forall w_H \in W_H.
\end{aligned}
\end{equation}
Here, $P_0^H$ is the $L^2$ projection on the piecewise constant space
on $\cT_H$. 
Then, the two-level additive Schwarz preconditioner can be
constructed as 
\begin{equation} \label{eq:additive-precond2}
B_{\rm ad} = I_H^h A_H^{-1} (I_H^h)' + \sum_{i=1}^J \iota_i S_i^{-1}
\iota_i'. 
\end{equation}
Our main contribution is the following estimate.
\begin{theorem}\label{thm:additive-condition-number}
The condition number of $B_{\rm ad}S$ satisfies 
$$
\mathrm{cond}(B_{\rm ad}S) \le C (1+N_c) \frac{H^2}{\delta^2},
$$
where $C$ is independent to both the mesh size $h$ and the Lam\'e
constants. 
\end{theorem}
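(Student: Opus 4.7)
The plan is to invoke the abstract additive Schwarz framework (Lions--Bramble--Pasciak--Xu theory) applied to the decomposition $M_{h,k+1} = I_H^h W_H + \sum_{i=1}^J \iota_i M_i$. Writing $B_{\rm ad}S$ as in \eqref{eq:additive-precond2}, it suffices to show (i) an upper bound $\lambda_{\max}(B_{\rm ad}S) \le C(1+N_c)$ and (ii) a stable decomposition estimate of the form: for every $\lambda \in M_{h,k+1}$ there exist $w_H \in W_H$ and $\lambda_i \in M_i$ with $\lambda = I_H^h w_H + \sum_i \iota_i \lambda_i$ and
$$
\|w_H\|_{A_H}^2 + \sum_{i=1}^{J}\|\lambda_i\|_{S_i}^2 \;\lesssim\; \frac{H^2}{\delta^2}\,\|\lambda\|_S^2.
$$

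The upper bound in (i) is the routine coloring argument: since each point of $\Omega$ lies in at most $N_c$ subdomains, a sum of $\|\cdot\|_{S_i}^2$ over the local subspaces can be compared to $\|\sum_i \iota_i \lambda_i\|_S^2$ with a multiplicative constant proportional to $N_c$, and adding in the coarse space contributes an extra $+1$. The real work lies in (ii).

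For the stable decomposition I would proceed in three steps. First, using Theorem \ref{lm:equivalent-norm}, establish the robust equivalence $\|I_H^h w_H\|_S \eqsim \|w_H\|_{A_H}$ for all $w_H \in W_H$: the $2\tilde{\mu}|\cdot|_{h,K}^2$ part of $\|\cdot\|_S$ matches the $2\tilde{\mu}\|\bepsilon(\cdot)\|_0^2$ part of $\|\cdot\|_{A_H}$, while the $\tilde{\lambda}|\cdot|_{*,K}^2$ part matches the $\tilde{\lambda}\|P_0^H \div\,\cdot\|_0^2$ part, so that a proper choice of the intergrid operator $I_H^h$ (specified in Section \ref{subsec:intergrid}) transfers one norm into the other uniformly in $h$ and the Lam\'e constants. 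Second, construct a quasi-interpolation $\Pi_H : M_{h,k+1} \to W_H$ of Cl\'ement/Sch\"oberl type that satisfies simultaneously
$$
\|\Pi_H \lambda\|_{A_H} \lesssim \|\lambda\|_S
\quad\text{and}\quad
\sum_{K \subset \Omega_i}\!h^{-1}\|\lambda - I_H^h \Pi_H\lambda\|_{0,\partial K}^2 \lesssim \frac{1}{H^2}\|\lambda\|_{S,\,\omega_i}^2.
$$
The Poisson-ratio-robust approximation requires $\Pi_H$ to preserve the weak divergence on macro-elements, mirroring Sch\"oberl's device for the primal $\cP_2$--$\cP_0$ element. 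Third, set $r := \lambda - I_H^h \Pi_H \lambda$, use the partition of unity $\{\theta_i\}$ to split $r$ into locally supported pieces, and define $\lambda_i \in M_i$ by applying a face-wise $L^2$-projection of $\theta_i r$ onto the appropriate multiplier component. Combining $\|\nabla \theta_i\|_\infty \lesssim \delta^{-1}$ with the $L^2$-upper-bound (Lemma \ref{lm:L2-upper}), the approximation property of step two, and finite overlap, one derives
$$
\sum_{i=1}^{J}\|\lambda_i\|_{S_i}^2 \lesssim (2\tilde\mu + \tilde\lambda)\,h^{-1}\sum_i \|\theta_i r\|_0^2 \lesssim \frac{H^2}{\delta^2}\|\lambda\|_S^2,
$$
where the last step exploits the matched $\tilde\mu$/$\tilde\lambda$-structure so that the factor $(2\tilde\mu + \tilde\lambda)h^{-1}$ is absorbed by the approximation estimate.

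The hard part is step two: building a coarse quasi-interpolant onto $W_H$ that is robust as $\tilde\lambda \to \infty$. A naive Cl\'ement interpolant satisfies $L^2$-approximation but inflates the $\tilde\lambda$-term through $\|\div(\cdot)\|_0^2$, so one must project onto a macro-element weakly-divergence-free subspace before interpolating, exactly as in \cite{schoberl1999multigrid}. The translation of that construction from nodal vertex values into values for the hybrid multiplier (face) space is where the analysis is most delicate, and it is there that the assumption $\kappa \ge \kappa_0 > 0$ enters (via Lemma \ref{lm:stableJ}) to guarantee that locally supported multiplier functions can always be realized as jumps of locally supported stresses with controlled norm.
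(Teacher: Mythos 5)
Your overall framework is the right one and matches the paper's: the bound reduces to (i) finite overlap plus stability of $I_H^h$ for $\lambda_{\max}$, and (ii) a stable decomposition for $\lambda_{\min}$; the paper proves exactly these two ingredients (Lemma \ref{lm:I-stability} and Theorem \ref{lm:stable-decomposition}) and cites \cite{xu1992iterative, toselli2005domain} for the abstract theory. (Minor point: only $\|I_H^h w_H\|_S \lesssim \|w_H\|_{A_H}$ is needed and proved, not the two-sided equivalence you assert.) However, your construction of the stable decomposition has a genuine gap at the step where robustness in $\tilde\lambda$ must be secured. You bound the local pieces by $\sum_i\|\lambda_i\|_{S_i}^2 \lesssim (2\tilde\mu+\tilde\lambda)h^{-1}\sum_i\|\theta_i r\|_0^2$ via Lemma \ref{lm:L2-upper} and claim the factor $(2\tilde\mu+\tilde\lambda)$ is ``absorbed.'' It cannot be: for a weakly divergence-free $\lambda$ the right-hand side $\|\lambda\|_S^2 \eqsim 2\tilde\mu|\lambda|_h^2$ carries no $\tilde\lambda$, while your left-hand side still does, so the estimate degenerates as $\tilde\lambda\to\infty$. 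The only way out is to arrange that each local piece has vanishing flux integrals $\int_{\partial K}\lambda_i\cdot\nu\,ds=0$, so that in the norm equivalence \eqref{eq:equivalent-energy-norm} the $\tilde\lambda|\cdot|_{*,K}^2$ term vanishes identically and only the benign $2\tilde\mu h^{-1}\|\cdot\|_{0,\partial K}^2$ bound is invoked. This is precisely what the paper engineers: the correction $\Pi_{2,h}$ forces $\int_F\Pi_h\lambda\,ds=\int_F\lambda\,ds$, hence the remainder $\lambda_0=\lambda-Q_h\Pi_h\lambda$ lies in $M_{h,0}^\perp$, and the local pieces are defined with the projection $Q_0^\perp(\theta_i\lambda_0)$ to preserve this property. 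Your sketch mentions preserving weak divergence but never identifies this cancellation mechanism, and as written your estimate fails for nearly incompressible materials.

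A second, related divergence: you interpolate in one shot onto the coarse space $W_H$ and handle the scale-$H$ remainder with the partition of unity, whereas the paper interpolates onto the \emph{fine} primal space $W_h$ (via $\Pi_h=\Pi_{1,h}+\Pi_{2,h}(I-\Pi_{1,h})$, built from the rigid-motion projections $P_{K,{\rm RM}}$ and Lemma \ref{lm:tr-rigid-stability}), obtains a remainder that is small at scale $h$ ($\|\lambda_0\|_0^2\lesssim h\|\lambda\|_S^2$, Lemma \ref{lm:stable-interpolation}), and then delegates the entire $H^2/\delta^2$ factor to Sch\"oberl's known decomposition of $W_h$ for the primal $\cP_2$--$\cP_0$ problem (Lemma \ref{lm:p2-decomposition}). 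This routing matters quantitatively: combining your $(2\tilde\mu+\tilde\lambda)h^{-1}$ factor with an $O(H^2)$ coarse $L^2$-approximation yields a bound of order $H^2/(h\delta^2)$ rather than $H^2/\delta^2$ (your displayed approximation estimate with the factor $1/H^2$ is also dimensionally inconsistent, since $h^{-1}\|\cdot\|_{0,\partial K}^2$ and $\|\cdot\|_{S,K}^2$ already scale identically). The paper's two-stage route avoids both problems because the only quantity multiplied by $h^{-1}$ is the scale-$h$-small, zero-moment remainder.
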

According to \cite{xu1992iterative, toselli2005domain}, the estimate
of the condition number of the additive Schwarz method is based on
the stability of the intergrid transfer operator $I_H^h$ (Lemma
\ref{lm:I-stability}) and the stable decomposition (Theorem
\ref{lm:stable-decomposition}), which will be proved in the rest of
this section.

Now, we are ready to introduce the multilevel preconditioner as
follows:
\begin{equation} \label{eq:multilevel}
\tilde{B}_{\rm ad} = I_H^h B_H (I_H^h)' + \sum_{i=1}^J \iota_i
S_i^{-1} \iota_i'. 
\end{equation}
Here, $B_H: W_H' \mapsto W_H$ is the multilevel preconditioner for
$A_H$, see \cite{schoberl1999multigrid, schoberl1999multigridthesis}.
Then we have the following theorem.

\begin{theorem}\label{thm:multilevel-condition-number}
If 
$$ 
\langle B_H^{-1} w_H, w_H \rangle \eqsim \langle A_H w_H, w_H \rangle
\qquad \forall w_H \in W_H,
$$ 
then the condition number of $\tilde{B}_{\rm ad}S$ satisfies 
$$
\mathrm{cond}(\tilde{B}_{\rm ad}S) \le C (1+N_c) \frac{H^2}{\delta^2},
$$
where $C$ is independent to the mesh size $h$ and the Lam\'e constants. 
\end{theorem}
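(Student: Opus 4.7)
The plan is to reduce Theorem \ref{thm:multilevel-condition-number} to Theorem \ref{thm:additive-condition-number} by invoking the spectral equivalence $\langle B_H^{-1} w_H, w_H\rangle \eqsim \langle A_H w_H, w_H\rangle$ within the abstract additive Schwarz framework. Both preconditioners share the same space decomposition
\[
M_{h,k+1} = I_H^h W_H + \sum_{i=1}^J \iota_i M_i,
\]
and the same local solvers $S_i^{-1}$ on the subdomain spaces $M_i$. The only difference between $B_{\rm ad}$ and $\tilde B_{\rm ad}$ is that the coarse-level solve $A_H^{-1}$ is replaced by the multilevel solver $B_H$. Therefore, the colorability/finite-overlap bound that controls $C_1$ in the standard Schwarz theory (cf.~\cite{xu1992iterative, toselli2005domain}) is identical to the one used for Theorem \ref{thm:additive-condition-number}, and only the stable decomposition constant $C_0$ needs to be re-examined.

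First, I would recall the stable decomposition for $B_{\rm ad}$ supplied by the proof of Theorem \ref{thm:additive-condition-number}: for every $\lambda\in M_{h,k+1}$ there exist $w_H\in W_H$ and $\lambda_i\in M_i$ with $\lambda = I_H^h w_H + \sum_i \iota_i\lambda_i$ and
\[
a_H(w_H,w_H) + \sum_{i=1}^J s_i(\lambda_i,\lambda_i)
\;\le\; C\bigl(1 + H^2/\delta^2\bigr)\, s(\lambda,\lambda).
\]
Then I would simply replace $a_H(w_H,w_H)$ by $\langle B_H^{-1} w_H,w_H\rangle$ using the hypothesized equivalence; this changes the constant only by a factor independent of $h$, $H$, and the Lamé constants. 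This yields a stable decomposition with respect to the bilinear form underlying $\tilde B_{\rm ad}$ with the same dependence $C(1+H^2/\delta^2)$ on the parameters.

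Next, I would apply the abstract additive Schwarz identity (e.g., the Xu--Zikatanov framework): the maximal eigenvalue of $\tilde B_{\rm ad}S$ is bounded by $(1+N_c)$ because of the finite-overlap property of the local spaces plus the single coarse space, while the minimal eigenvalue is bounded below by the inverse of the stable decomposition constant just obtained. Combining these two bounds produces
\[
\mathrm{cond}(\tilde B_{\rm ad} S) \;\le\; C(1+N_c)\frac{H^2}{\delta^2},
\]
with $C$ independent of $h$ and of $\tilde\mu,\tilde\lambda$, completing the proof.

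The main obstacle in this argument is essentially bookkeeping: one must check that the constants hidden in $\eqsim$ between $\langle B_H^{-1}\cdot,\cdot\rangle$ and $a_H(\cdot,\cdot)$ are themselves uniform in the Lamé constants, which is exactly the content of Schöberl's robust multigrid result \cite{schoberl1999multigrid, schoberl1999multigridthesis} invoked as a hypothesis. Once that uniformity is in hand, no new estimate concerning the hybridized multiplier system, the jump-lifting lemmas (Lemma \ref{lm:stability-local-jump}, Lemma \ref{lm:stableJ}), or the equivalent norm of Theorem \ref{lm:equivalent-norm} is required—these were all absorbed into the proof of Theorem \ref{thm:additive-condition-number} and carry over verbatim.
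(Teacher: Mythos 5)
Your proposal is correct and follows essentially the same route as the paper, whose proof is the one-line observation that the result follows from Theorem \ref{thm:additive-condition-number} together with the spectral equivalence of $\|\cdot\|_{A_H}$ and $\|\cdot\|_{B_H^{-1}}$; you have simply unpacked that reduction within the standard additive Schwarz framework (unchanged local solvers and overlap bound, stable decomposition constant perturbed only by the equivalence constants).
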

\begin{proof}
It follows directly from Theorem \ref{thm:additive-condition-number}
and norm equivalence between $\|\cdot\|_{A_H}$ and
$\|\cdot\|_{B_H^{-1}}$. 
\end{proof}

\begin{remark}
\begin{enumerate} 
\item When $H \lesssim \delta$, the preconditioners
\eqref{eq:additive-precond2} and \eqref{eq:multilevel} are both
uniform with respect to $h$ and the Lam\'{e} constants.
\item It can be proved that the corresponding multiplicative preconditioners
are also uniform as well as the additive version, provided that the local
problems associated with the subdomains are solved exactly (cf.
\cite{cho2008new, hu2013combined, antonietti2015two}). 
\item Some robust multilevel methods to solve the linear elasticity problem
can be found in \cite{brenner1996multigrid, lee2009robust,
de2013subspace, hong2016robust, chen2016fast}. By constructing stable intergrid
transfer operators similar to $I_H^h$, it is feasible to construct
corresponding multilevel solvers to the hybridized mixed method.
\end{enumerate}
\end{remark}

%%%%%%%%%%%%%%%%%%%%%%%%%%%%%%%%%%%%%%%%%%%%%%%%%%%%%%%%%%%%
%% Intergrid transer operator
%%%%%%%%%%%%%%%%%%%%%%%%%%%%%%%%%%%%%%%%%%%%%%%%%%%%%%%%%%%%
\subsection{Intergrid Transfer Operator $I_H^h$}
\label{subsec:intergrid}

The construction of intergrid transfer operator $I_H^h$ is divided
into two steps: (i) the intergrid transfer operator from coarse grid
to fine grid proposed by Sch{\"o}berl \cite{schoberl1999multigrid},
and (ii) the $L^2$ projection operator to Lagrange multiplier space.
More precisely, we first define the $\mathcal{P}_2$ Lagrange finite
element space $W_h$ on $\cT_h$
$$
W_h := \{ w \in H_0^1(\Omega; \R^2)~|~ w|_K \in \cP_2(K;\R^2) \quad
\forall K\in \cT_h \},
$$
with the primal elastic norm $\|\cdot\|_{A_h}$, bilinear form
$a_h(\cdot, \cdot)$, and $A_h: W_h \mapsto W_h'$ similar to
\eqref{eq:elastic-norm}. In \cite{schoberl1999multigrid}, the harmonic
extension $\tilde{I}_{H}^h: W_H \mapsto W_h$ was defined as
follows: For $w_H\in W_H$, the value of $\tilde{I}_{H}^h w_H$ on
each edge of coarse element $K_H\in \cT_H$ does not change, and the
value in the interior of $K_H$ is defined by discrete harmonic
extension, that is,
\begin{equation} \label{eq:schober-I}
\begin{aligned}
\tilde{I}_{H}^h w_H |_{\partial K_H} &= w_H|_{\partial K_H}, \\ 
a_h(\tilde{I}_H^h w_H, v_h) &= 0 \qquad \quad \forall v_h\in W_{h,0}(K_H),
\end{aligned}
\end{equation}
where $W_{h,0}(K_H) :=  \{ w\in H^1_0(K_H;\R^2)~|~ w|_{K'} \in
\cP_2(K';\R^2) \quad \forall K' \in K_H\}$. 
%%As a desirable feature, $\tilde{I}_H^h$ lifts coarse grid div-free
%%functions to fine grid div-free functions. 
$\tilde{I}_H^h$ has the following stability property (cf.
\cite{schoberl1999multigrid}), 
\begin{equation}\label{eq:p2stability}
\| \tilde{I}_H^h w_H \|_{A_h} \lesssim \|w_H\|_{A_H} \qquad \forall
w_H \in W_H.
\end{equation}

The intergrid transfer operator $I_H^h$ appearing in
\eqref{eq:additive-precond2} is
defined as the product of two operators, 
\begin{equation}\label{eq:def-intergrid-operator}
I_{H}^h := Q_h
\tilde{I}_H^h: W_H \mapsto M_{h,k+1},
\end{equation} 
where $Q_h:W_h \mapsto M_{h,k+1}$ is the $L^2$ projection on edges 
(i.e., $\langle Q_h w_h , \mu \rangle_{\cF_h} := \langle  w_h ,
\mu \rangle_{\cF_h}, \forall \mu\in M_{h,k+1}$). 
Then, we have the following lemma:

\begin{lemma} \label{lm:I-stability}
The intergrid transfer operator $I_H^h: W_H \mapsto M_{h,k+1}$ has the
following stability property: 
\begin{equation}
\|I_H^h w_H \|_S \lesssim \|w_H\|_{A_H} \qquad \forall w_H \in W_H.
\label{eq:I-stability-AH}
\end{equation}
\end{lemma}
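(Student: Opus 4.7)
The plan is to reduce the bound on $\|I_H^h w_H\|_S$ to the fine-grid elastic norm of $w_h := \tilde I_H^h w_H$ via the equivalent norm in Theorem \ref{lm:equivalent-norm}, and then invoke the stability \eqref{eq:p2stability} of Sch{\"o}berl's operator. Write $\lambda := I_H^h w_H = Q_h w_h$; since $w_H \in H_0^1(\Omega;\R^2)$ and hence $w_h|_{\partial \Omega} = 0$, the boundary condition built into $M_{h,k+1}$ is automatically satisfied. By Theorem \ref{lm:equivalent-norm} it suffices to control $2\tilde\mu \sum_K |\lambda|_{h,K}^2 + \tilde\lambda \sum_K |\lambda|_{*,K}^2$ by $\|w_h\|_{A_h}^2$.

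For the first piece, I would exploit the key compatibility observation that for any $\btau \in Z_h(K)$ the trace $\btau \nu|_F$ belongs to $\cP_{k+1}(F;\R^n)$, which lies in the local multiplier space. Thus the $L^2$ projection $Q_h$ is invisible in the pairing:
\begin{equation*}
\langle \lambda, \btau\nu \rangle_{\partial K} = \langle Q_h w_h, \btau\nu \rangle_{\partial K} = \langle w_h, \btau\nu\rangle_{\partial K}.
\end{equation*}
Integration by parts together with $\div\btau = 0$ and the symmetry of $\btau$ converts this into $(\bepsilon(w_h), \btau)_K$, from which Cauchy--Schwarz and the definition \eqref{eq:norm-h} give $|\lambda|_{h,K} \le \|\bepsilon(w_h)\|_{0,K}$.

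For the divergence term, the same compatibility $\bI\nu|_F \in \cP_{k+1}(F;\R^n)$ and the divergence theorem yield
\begin{equation*}
|\lambda|_{*,K} = |K|^{-1/2}\left| \int_{\partial K} w_h \cdot \nu\,ds\right| = |K|^{-1/2}\left|\int_K \div w_h\,dx\right| = \|P_0^h \div w_h\|_{0,K},
\end{equation*}
where $P_0^h$ denotes the $L^2$ projection onto piecewise constants on $\cT_h$. Summing both estimates over $K \in \cT_h$, applying Theorem \ref{lm:equivalent-norm}, and using \eqref{eq:p2stability} together with the fine-grid identity $\|w_h\|_{A_h}^2 = 2\tilde\mu \|\bepsilon(w_h)\|_0^2 + \tilde\lambda \|P_0^h \div w_h\|_0^2$ produce
\begin{equation*}
\|I_H^h w_H\|_S^2 \lesssim 2\tilde\mu \|\bepsilon(w_h)\|_0^2 + \tilde\lambda \|P_0^h \div w_h\|_0^2 = \|w_h\|_{A_h}^2 \lesssim \|w_H\|_{A_H}^2.
\end{equation*}

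The only nontrivial point is the step $\langle Q_h w_h, \btau\nu\rangle = \langle w_h, \btau\nu\rangle$ and its analogue for $\bI\nu$; both rest on the fact that normal traces of our discrete stress space have degree $\le k+1$, which is exactly the degree retained by the multiplier space. After that, everything reduces to routine integration by parts and the previously established stability of the harmonic extension, so I expect no serious obstacle beyond verifying this polynomial-degree matching.
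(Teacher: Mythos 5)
Your argument is correct and follows essentially the same route as the paper: reduce to the local semi-norms via Theorem \ref{lm:equivalent-norm}, observe that $Q_h$ drops out of the pairings because normal traces of $Z_h(K)$ (and of $\bI$) are face polynomials of degree $\le k+1$, integrate by parts to get $\|\bepsilon(w_h)\|_{0,K}$ and $\|P_0^h\div w_h\|_{0,K}$, and conclude with \eqref{eq:p2stability}. The only difference is that you spell out the polynomial-degree matching and the boundary-face issue that the paper leaves implicit, which is a worthwhile clarification but not a different proof.
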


\begin{proof} 
%\eqref{eq:I-stability-S0} follows directly from the definition\eqref{eq:induced-norm}.  
Note that $Q_h$ is the $L^2$ projection on $M_{h,k+1}$. Then, for any
$w_h \in W_h$,
$$ 
\begin{aligned}
|Q_h w_h|_{*,K} &= |K|^{-1/2} \left| \int_{\partial K}
Q_h w_h \cdot \nu ~ds \right|  
= |K|^{-1/2} \left| \int_{\partial K}
    w_h \cdot \nu ~ds \right| \\ 
&= |K|^{-1/2} \left| \int_K \div w_h ~dx \right| =
\|P_0^h \div w_h\|_{0, K},
\end{aligned}
$$ 
and 
$$
\begin{aligned}
|Q_h w_h|_{h, K} &= \sup_{\btau \in Z_h(K)} \frac{\langle Q_h w_h,
  \btau \nu \rangle_{\partial K}}{\|\btau\|_{0,K}} =  \sup_{\btau \in
Z_h(K)} \frac{\langle  w_h, \btau \nu \rangle_{\partial
K}}{\|\btau\|_{0,K}}\\ &=  \sup_{\btau \in Z_h(K)} \frac{(
\epsilon(w_h), \btau )_{ K}}{\|\btau\|_{0,K}} \leq
\|\epsilon(w_h)\|_{0,K},
\end{aligned}
$$ 
which implies that $\|Q_h w_h\|_S \lesssim \|w_h\|_{A_h}$ due to
Theorem \ref{lm:equivalent-norm}. The stability property
\eqref{eq:I-stability-AH} then follows from \eqref{eq:p2stability} and
the stability property of $Q_h$.
\end{proof}

%% Condition number estimate 
\subsection{Stable Decomposition} \label{subsec:stable-decom}
In this section, we shall present the stable decomposition. A key tool
to prove the stable decomposition is the interpolation  $\Pi_h:
M_{h,k+1} \mapsto W_h$, which is used to capture the low-frequency of
the multiplier $\lambda\in M_{h,k+1}$. 

We first define a parameter-independent problem: Find
$(\bar{\bsigma}_{\lambda}, \bar{u}_{\lambda}) \in  \Sigma_{h,k+1}^{-1}
\times V_{h,k}$ such that for any element $K\in \cT_h$, 
\begin{subequations}\label{eq:lsm-nopara}
\begin{align}\label{eq:lsm1-nopara}
& (\bar{\bsigma}_ {\lambda}, \btau_h )_{K} + ( \bar{u}_{\lambda}, \div
\btau_h )_{K} = \langle \lambda, \btau_h \nu\rangle_{\partial
  K} \qquad \forall \btau_h \in\cP_{k+1}(K;\mS),\\
& (\div \bar{\bsigma}_ \lambda, v_h)_{K}  \qquad\qquad\qquad = 0
\qquad \qquad \qquad \forall v_h \in \cP_{k}(K;\mathbb{R}^2). 
\label{eq:lsm2-nopara}
\end{align}
\end{subequations}
We then introduce the following rigid motion space on each element
$K$,
\begin{equation} \label{eq:RM}
{\rm RM}(K) := \{v\in H^1(K,\mathbb{R}^2)~|~(\nabla v + (\nabla v)^T)/2 =
0\}.
\end{equation}
We also introduce a projection $P_{K,{\rm RM}}: M_{h,k+1}(\partial K) \mapsto
\mathrm{RM}(K)$ by 
\begin{equation} \label{eq:proj-RM}
(P_{K, {\rm RM}}\lambda, r)_{K} = (\bar{u}_{\lambda}, r)_{K} \qquad \forall
r \in \mathrm{RM}(K).
\end{equation}
Then, the construction of the interpolation $\Pi_h$ is divided into two
steps.  First, a Cl\'ement type interpolation $\Pi_{1,h}:
M_{h,k+1}\mapsto (\cP_{1,h})^2\cap H^1(\Omega; \R^2)$ is defined as, for
any $a\in \cN_h$,
$$
(\Pi_{1,h} \lambda)(a) := 
\left\{ 
\begin{matrix} 
\frac{\sum_{K\in \omega_a} (P_{K, {\rm RM}} \lambda)(a)}{\sum_{K\in
\omega_a} 1} & a \notin \partial \Omega, \\
0 & a \in \partial \Omega,
\end{matrix}
\right. 
$$
where $\cP_{1,h}$ is the piecewise linear Lagrange element and
$\omega_a$ is the set of elements containing the vertex $a$.  Next, we
define the correction operator $\Pi_{2,h}: M_{h,k+1}\cup
H_1(\Omega,\R^2) \mapsto W_h$: 
$$
(\Pi_{2,h} \lambda) (a) := 0 \quad \forall a\in \cN_h, \quad
\text{and} \quad
\int_F \Pi_{2,h} \lambda ~ds := \int_F  \lambda ~ds \quad \forall F
\in \cF_h.
$$
Then, the interpolation $\Pi_h$ is composed by these two operators, 
\begin{equation} \label{eq:stable-interpolation}
\Pi_{h} \lambda := \Pi_{1,h}\lambda + \Pi_{2,h} (\lambda - \Pi_{1,h}\lambda)
\qquad \forall \lambda \in M_{h,k+1}.
\end{equation}
Note that the interpolation $\Pi_h$ is only used for analysis and will
not occur in the computation. To prove the stability and approximation
property of $\Pi_h$, we present some lemmas on  $P_{K, {\rm RM}}$.

\begin{lemma}\label{lm:RMstability}
It holds that 
\begin{equation}\label{eq:RMstability}
h_K^{-1}\|\bar{u}_{\lambda} - P_{K, {\rm RM}} \lambda\|_{0,K}
\lesssim|\lambda|_{h,K}.
\end{equation}
\end{lemma}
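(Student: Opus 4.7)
The plan is to bound $w := \bar{u}_\lambda - P_{K,\mathrm{RM}}\lambda$ by a duality argument: write $\|w\|_{0,K}^2$ as a pairing that can be converted, through the first equation of \eqref{eq:lsm-nopara}, into something controlled by $\|\bar{\bsigma}_\lambda\|_{0,K}$, which we then bound by $|\lambda|_{h,K}$ using the fact that $\bar{\bsigma}_\lambda$ is itself divergence-free.

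First I would observe that by the definition of $P_{K,\mathrm{RM}}$ we have $w \perp \mathrm{RM}(K)$ in $L^2(K)$. I would then invoke a local bubble inf--sup for the pair $\cP_{k+1}(K;\mS)\cap\{\btau\nu|_{\partial K}=0\}$ and the quotient space $\cP_k(K;\R^2)/\mathrm{RM}(K)$: there exists $\btau_h \in \cP_{k+1}(K;\mS)$ with $\btau_h\nu|_{\partial K}=0$, $\div\btau_h = w$, and $\|\btau_h\|_{0,K} \lesssim h_K\|w\|_{0,K}$. The compatibility condition is exactly $w\perp\mathrm{RM}(K)$ (obtained by integration by parts), so this is well posed; the stability is a single-element, shape-regular version of the bubble stability proved by Hu and Zhang, and it is the step I expect to be the main technical point (its proof is likely deferred to the appendix, together with Lemma \ref{lm:stability-local-jump}).

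With such a $\btau_h$ in hand, I would test \eqref{eq:lsm1-nopara} against it. Since $\btau_h\nu|_{\partial K}=0$ and $\div\btau_h = w$, the boundary term drops and one obtains $(\bar{u}_\lambda, w)_K = -(\bar{\bsigma}_\lambda, \btau_h)_K$. Combined with $(P_{K,\mathrm{RM}}\lambda, w)_K = 0$ (by the orthogonality $w\perp \mathrm{RM}(K)$ and $P_{K,\mathrm{RM}}\lambda\in\mathrm{RM}(K)$), this yields
\begin{equation*}
\|w\|_{0,K}^2 = (\bar{u}_\lambda, w)_K - (P_{K,\mathrm{RM}}\lambda, w)_K = -(\bar{\bsigma}_\lambda,\btau_h)_K \le \|\bar{\bsigma}_\lambda\|_{0,K}\|\btau_h\|_{0,K} \lesssim h_K\|\bar{\bsigma}_\lambda\|_{0,K}\|w\|_{0,K},
\end{equation*}
so $\|w\|_{0,K} \lesssim h_K\|\bar{\bsigma}_\lambda\|_{0,K}$.

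Finally I would bound $\|\bar{\bsigma}_\lambda\|_{0,K}$. By \eqref{eq:lsm2-nopara} and the fact $\bar{\bsigma}_\lambda\in\cP_{k+1}(K;\mS)$ has divergence in $\cP_k(K;\R^2)$, we get $\div\bar{\bsigma}_\lambda = 0$, hence $\bar{\bsigma}_\lambda \in Z_h(K)$. Testing \eqref{eq:lsm1-nopara} with $\btau_h = \bar{\bsigma}_\lambda$ kills the $\bar{u}_\lambda$-term and gives $\|\bar{\bsigma}_\lambda\|_{0,K}^2 = \langle\lambda,\bar{\bsigma}_\lambda\nu\rangle_{\partial K}$. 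The very definition \eqref{eq:norm-h} of $|\lambda|_{h,K}$ then yields $\|\bar{\bsigma}_\lambda\|_{0,K}\le |\lambda|_{h,K}$. Chaining the two estimates produces $\|w\|_{0,K} \lesssim h_K|\lambda|_{h,K}$, which is \eqref{eq:RMstability}. Apart from the single-element bubble inf--sup (step with $\btau_h$), every step is an elementary orthogonality or Cauchy--Schwarz computation.
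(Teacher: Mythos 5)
Your proposal is correct and follows essentially the same route as the paper: both use the Hu--Zhang local bubble surjectivity (Theorem 2.2 of \cite{hu2015finite}, which supplies exactly the $\tilde{\btau}_h \in \Sigma_{k+1,b}(K)$ with $\div\tilde{\btau}_h = \bar{u}_\lambda - P_{K,{\rm RM}}\lambda$ and $h_K^{-1}\|\tilde{\btau}_h\|_{0,K}\lesssim \|\div\tilde{\btau}_h\|_{0,K}$ that you postulate), test \eqref{eq:lsm1-nopara} against it, and then identify $\|\bar{\bsigma}_\lambda\|_{0,K}$ with $|\lambda|_{h,K}$ via $\bar{\bsigma}_\lambda\in Z_h(K)$. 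The only cosmetic difference is that the paper gets the equality $\|\bar{\bsigma}_\lambda\|_{0,K}=|\lambda|_{h,K}$ from the Riesz-representation identity $(\bar{\bsigma}_\lambda,\btau)_K=\langle\lambda,\btau\nu\rangle_{\partial K}$ on all of $Z_h(K)$, whereas you only extract the one-sided bound by testing with $\bar{\bsigma}_\lambda$ itself, which suffices.
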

\begin{proof}
By definition, we have  $(\bar{u}_\lambda - P_{K,{\rm RM}} \lambda) \in
{\rm RM}(K)^{\bot}$.  According to the Theorem 2.2 in
\cite{hu2015finite}, we can find $\tilde{\btau}_h \in
\Sigma_{k+1,b}(K)$ such that
$$
\div \tilde{\btau}_h = \bar{u}_\lambda - P_{K,{\rm RM}} \lambda \quad
\text{and} \quad h_K^{-1}\|\tilde{\btau}_h\|_{0,K} \lesssim \|\div
\tilde{\btau}_h\|_0 = \| \bar{u}_\lambda - P_{K, {\rm RM}}\lambda \|_{0,K}. 
$$
Since $\tilde{\btau}_h \nu|_{\partial K} = 0$, \eqref{eq:lsm1-nopara}
implies that 
$$
(\bar{\bsigma}_ {\lambda}, \tilde{\btau}_h )_{K} + ( \bar{u}_{\lambda}
-P_{K,{\rm RM}}\lambda, \div \tilde{\btau}_h )_{K} = 0.
$$
Thus, 
$$
\begin{aligned}
\|\bar{u}_\lambda - P_{K,{\rm RM}} \lambda\|_{0,K}^2 &=
(\bar{u}_\lambda -P_{K,{\rm RM}} \lambda, \div \tilde{\btau}_h)_{K} =
-(\bar{\bsigma}_{\lambda}, \tilde{\btau}_h) \le
\|\bar{\bsigma}_{\lambda}\|_{0,K} \|\tilde{\btau}_h\|_{0,K} \\
& \lesssim h_K \|\bar{u}_\lambda - P_{K, {\rm RM}}\lambda\|_{0,K}
\|\bar{\bsigma}_\lambda\|_{0,K}. 
\end{aligned}
$$
Note that $\bar{\bsigma}_{\lambda} \in Z_h(K)$ by
\eqref{eq:lsm2-nopara}. By definition of $|\cdot|_{h,K}$ in
\eqref{eq:norm-h}, we have
$$
|\lambda|_{h,K} = \sup_{\btau_h \in Z_h(K)} \frac{\langle \lambda,
  \btau_h\nu\rangle_{\partial K}}{\|\btau_h\|_{0,K}} = \sup_{\btau_h
\in Z_h(K)} \frac{(\bar{\bsigma}_h, \btau_h)_K}{\|\btau_h\|_{0,K}} =
\|\bar{\bsigma}_{\lambda}\|_{0,K}.
$$
Then, \eqref{eq:RMstability} follows from the above two equations. 
\end{proof}

\begin{lemma}\label{lm:tr-rigid-stability}
Assume that $\kappa \ge\kappa_0 >0$. It holds that 
\begin{equation}\label{eq:tr-rigid-stability}
h_K^{-1}\|\lambda - P_{K, {\rm RM}} \lambda\|_{0,\partial K}^2 \lesssim
\sum_{K'\in \omega_K }  | \lambda|_{h,K'}^2.
\end{equation}
\end{lemma}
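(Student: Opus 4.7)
My plan is to decompose the target quantity on $\partial K$ using the auxiliary field $\bar u_\lambda$ from the parameter--independent local problem \eqref{eq:lsm-nopara}, and then estimate the two resulting pieces separately using Lemma~\ref{lm:RMstability} together with a carefully chosen symmetric test tensor.

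First I would write
\[
(\lambda - P_{K, {\rm RM}}\lambda)\big|_{\partial K} = (\lambda - \bar u_\lambda|_K) + (\bar u_\lambda - P_{K, {\rm RM}}\lambda)
\]
and control the second summand by standard tools: since $\bar u_\lambda - P_{K, {\rm RM}}\lambda \in \cP_k(K;\R^2)$, a scaled trace inequality together with the polynomial inverse inequality gives
\[
h_K^{-1}\|\bar u_\lambda - P_{K, {\rm RM}}\lambda\|_{0,\partial K}^2 \lesssim h_K^{-2}\|\bar u_\lambda - P_{K, {\rm RM}}\lambda\|_{0,K}^2,
\]
and Lemma~\ref{lm:RMstability} then bounds the right-hand side by $|\lambda|_{h,K}^2$.

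For the first summand I would exploit an integration-by-parts identity coming from \eqref{eq:lsm1-nopara}: because $\btau_h$ is symmetric, $(\bar u_\lambda, \div\btau_h)_K = -(\bepsilon(\bar u_\lambda),\btau_h)_K + \langle \bar u_\lambda, \btau_h\nu\rangle_{\partial K}$, so
\[
(\bar\bsigma_\lambda - \bepsilon(\bar u_\lambda), \btau_h)_K = \langle \lambda - \bar u_\lambda, \btau_h\nu\rangle_{\partial K} \qquad \forall \btau_h \in \cP_{k+1}(K;\mS).
\]
To bound $\|\lambda - \bar u_\lambda\|_{0,F}$ on a single face $F \subset \partial K$ I would test this identity with a symmetric tensor whose normal trace reproduces $(\lambda - \bar u_\lambda)|_F$ on $F$, vanishes on $\partial K\setminus F$, and admits the scaling $\|\btau_h\|_{0,K}\lesssim h_K^{1/2}\|\lambda - \bar u_\lambda\|_{0,F}$. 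Combined with $\|\bar\bsigma_\lambda\|_{0,K} = |\lambda|_{h,K}$ (by the definition of $|\cdot|_{h,K}$ and \eqref{eq:lsm2-nopara}) and $\|\bepsilon(\bar u_\lambda)\|_{0,K} = \|\bepsilon(\bar u_\lambda - P_{K, {\rm RM}}\lambda)\|_{0,K} \lesssim h_K^{-1}\|\bar u_\lambda - P_{K, {\rm RM}}\lambda\|_{0,K}\lesssim |\lambda|_{h,K}$ (inverse inequality plus Lemma~\ref{lm:RMstability}), the Cauchy--Schwarz estimate yields $\|\lambda - \bar u_\lambda\|_{0,F}^2\lesssim h_K|\lambda|_{h,K}^2$, and summing over $F \subset \partial K$ and combining with Step~1 produces the claimed bound.

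The main obstacle is the construction of the symmetric test tensor $\btau_h$. Symmetric polynomial tensors obey vertex compatibility constraints on $\partial K$ (at each vertex of $K$, the normal traces from the two incident faces are determined by the single symmetric tensor value at that vertex, giving one linear relation), and the raw datum $(\lambda - \bar u_\lambda)|_{\partial K}$ need not respect them because $\lambda \in M_{h,k+1}$ is multi-valued at the vertices of $\cT_h$. Repairing these vertex compatibilities requires correcting the boundary data by small rigid--motion--valued perturbations at the corners of $K$, whose natural size is controlled by the analogous quantities on the elements sharing a vertex with $K$; redoing the local analysis on each such $K'$ then gives, for the correction, a bound of the form $\sum_{K'\in\omega_K\setminus\{K\}}|\lambda|_{h,K'}^2$. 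This is precisely why the patch $\omega_K$ appears on the right-hand side rather than the single element $K$, and making this compatibility correction quantitative under the no-nearly-singular-vertex condition $\kappa\ge \kappa_0 > 0$ is the delicate step of the proof.
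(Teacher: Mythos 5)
Your reduction to bounding $h_K^{-1}\|\lambda-\bar u_\lambda\|_{0,\partial K}^2$ via Lemma \ref{lm:RMstability}, the triangle inequality and a trace/inverse inequality is exactly the paper's first step, and your identity $(\bar{\bsigma}_\lambda-\bepsilon(\bar u_\lambda),\btau_h)_K=\langle \lambda-\bar u_\lambda,\btau_h\nu\rangle_{\partial K}$ is a sensible starting point. The gap is in the core construction: the symmetric test tensor you need does not exist on a single element, and your proposed repair is not justified. If $\btau_h\in\cP_{k+1}(K;\mS)$ has $\btau_h\nu=0$ on a face $F'$, then at a vertex $a$ shared by $F$ and $F'$ the value $\btau_h(a)$ must be a multiple of $t'(t')^T$ (with $t'$ the tangent of $F'$), so $\btau_h\nu_F(a)$ is forced to be parallel to $t'$; equivalently, prescribing $\btau_h\nu=g$ on all of $\partial K$ requires the vertex compatibility $\nu_{F'}\cdot g|_F(a)=\nu_F\cdot g|_{F'}(a)$, which the datum $g=\lambda-\bar u_\lambda$ violates because $\lambda$ is multi-valued at the vertices. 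Your suggestion to absorb the defect by rigid-motion-valued corrections at the corners, bounded by $\sum_{K'\in\omega_K\setminus\{K\}}|\lambda|_{h,K'}^2$, is asserted rather than proved, and a size count indicates it cannot close: the pointwise defect at a vertex is of order $h_K^{-1/2}\|\lambda-\bar u_\lambda\|_{0,\partial K}$, so the corresponding correction contributes an $L^2(\partial K)$ quantity of the same order as the term being estimated, making the argument circular absent a small constant that is not available.

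The paper avoids this by never attempting to realize the data as a normal trace from inside $K$ alone. It sums \eqref{eq:lsm1-nopara} over the whole patch $\omega_K$, rewrites the boundary terms as jumps $[\btau_h]$ paired against $\lambda-\bar u_K$ (with $\bar u_K$ the polynomial extension of $\bar u_\lambda|_K$ to $\omega_K$), and then invokes Lemma \ref{lm:stableJ} --- whose proof is the appendix construction that goes around each vertex and uses $|\theta_1+\theta_2-\pi|\ge\kappa_0$ to solve the closing $2\times 2$ system --- to lift the jump data to some $\btau_1\in\Sigma_{h,k+1}^{-1}(\omega_K)$. Because the test field now lives on the patch, two extra terms appear, and these are annihilated by a divergence correction $\btau_2$ (via the discrete inf-sup condition on $\omega_K$) and a bubble correction $\btau_3$; pairing the result with $\bar{\bsigma}_\lambda$ over $\omega_K$ is precisely what produces the patch sum $\sum_{K'\in\omega_K}|\lambda|_{h,K'}^2$. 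This patch-level jump lifting is the missing ingredient in your argument; without it, or an equivalent device, the estimate does not follow.
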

\begin{proof}
By virtue of Lemma \ref{lm:RMstability}, the triangle inequality, and
the trace inequality, we only need to prove 
\begin{equation}\label{eq:tr-rigid-stability1}
h_K^{-1}\|\lambda - \bar{u}_\lambda\|_{0,\partial K}^2 \lesssim
\sum_{K'\in \omega_K }  | \lambda|_{h,K'}^2.
\end{equation}

Consider the element patch $\omega_K$. Let $ \cF_h(\omega_K) := \cF_h
\cap \bar{\omega}_K$ and $\Sigma_{h,k+1}^{-1}(\omega_K) := \{\btau_h
\in L^2(\omega_K; \mS)~|~ \btau_h|_K' \in \cP_{k+1}(K';\mS) \quad
\forall K'\in \omega_K\}$.  By summation of \eqref{eq:lsm1-nopara}
over elements $K'\in \omega_K$, we have
\begin{equation} \label{eq:patch-equ}
(\bar{\bsigma}_ {\lambda}, \btau_h )_{\omega_K} + ( \bar{u}_{\lambda},
\div_h \btau_h )_{\omega_K} = \sum_{K' \in \omega_K}\langle \lambda,
\btau_h \nu\rangle_{\partial K'} \qquad  \forall \btau_h  \in
  \Sigma_{h,k+1}^{-1}(\omega_K).
\end{equation}
Note that $\bar{u}_{\lambda}|_{K} \in \cP_k(K;\mathbb{R}^2)$, we
denote by $\bar{u}_K$ the natural continuous extension of
$\bar{u}_{\lambda}|_{K}$ on $\omega_K$ (i.e., $\bar{u}_K$ and
$\bar{u}_\lambda|_K$ have the same polynomial form). Then, we can
recast \eqref{eq:patch-equ} as 
\begin{equation}\label{eq:patch}
(\bar{\bsigma}_ {\lambda}, \btau_h)_{\omega_K} + ( \bar{u}_{\lambda}
- \bar{u}_K, \div_h \btau_h )_{\omega_K} - (\epsilon(\bar{u}_K), \btau_h)_{\omega_K}
= \sum_{F \in \cF_h(\omega_K)}  \langle \lambda - \bar{u}_K,
[\btau_h]\rangle_{F}. 
\end{equation}

Since $\kappa \ge\kappa_0>0$, by Lemma \ref{lm:stableJ}, there exists
$\btau_1 \in \Sigma_{h,k+1}^{-1}(\omega_K)$ such that
\begin{equation} \label{eq:tau1}
[\btau_1]|_F = 
\begin{cases}
(\lambda - u_K)|_F&  F\in \partial K, \\ 
0 & \text{otherwise},
\end{cases}
\quad \text{and} \quad \|\btau_1\|_{0,\omega_K}^2 \lesssim h_K
\sin^{-2}(\kappa_0)\|\lambda - u_K\|_{0,\partial K}^2.
\end{equation}
Apply Lemma \ref{lm:infsup-high} or \ref{lm:infsup-low} on $\omega_K$,
we immediately know that there exists $\btau_2\in
\Sigma_{h,k+1}(\omega_K) := \{\btau \in H(\div,\omega_K;\mS)~|~
\btau|_{K'} \in \cP_{k+1}(K';\mS) \quad \forall K' \in \omega_K\}$
such that 
\begin{equation} \label{eq:tau2}
\div \btau_2 = -\div_h \btau_1\quad \text{and} \quad
h_K^{-1}\|\btau_2\|_{0,\omega_K} +\|\div \btau_2\|_{0,\omega_K}
\lesssim \|\div \btau_1\|_{0,\omega_K} \lesssim
h_K^{-1}\|\btau_1\|_{0,\omega_K}.
\end{equation}

Next, there is a unique decomposition that $\bar{u}_K|_K = \theta_1 +
\theta_2 \in {\rm RM}(K) \oplus {\rm RM}(K)^{\perp}$. Due to
Theorem 2.2 in \cite{hu2015finite}, we can find $\tilde{\btau}_3 \in
\Sigma_{k+1,b}(K)$ such that 
$$ 
\div \tilde{\btau}_3 = \theta_2 \quad \text{and} \quad
h_K^{-1}\|\tilde{\btau}_3\|_{0,K} \lesssim \|\theta_2\|_{0,K}.
$$ 
Let $\supp(\btau_3) \subset K$ and  
$$ 
\btau_3|_K = \begin{cases}
\frac{(\btau_1 + \btau_2,
    \epsilon(\bar{u}_K))_{\omega_K}}{\|\theta_2\|_{0,K}^2} \tilde{\btau}_3
& \theta_2 \neq 0, \\
\boldsymbol{0} & \theta_2 = 0.
\end{cases}
$$ 
A straightforward calculation shows that $\btau_3$ satisfies  
\begin{equation} \label{eq:tau3}
-(\btau_3, \epsilon(\bar{u}_K))_K = (\div\btau_3, \theta_2)_K =
(\btau_1+\btau_2, \epsilon(\bar{u}_K))_{\omega_K}, 
\end{equation}
and 
\begin{equation} \label{eq:tau3-2}
\begin{aligned}
\|\btau_3\|_{0,K} &\leq \|\btau_1 + \btau_2\|_{0,\omega_K}
\frac{\|\epsilon(\bar{u}_K)\|_{0,\omega_K}
\|\tilde{\btau}_3\|_{0,K}}{\|\theta_2\|_{0,K}^2} \\ 
& \lesssim \|\btau_1 + \btau_2\|_{0,\omega_K}
\frac{h_K\|\epsilon(\bar{u}_K)\|_{0,K}}{\|\theta_2\|_{0,K}}
\lesssim \|\btau_1+\btau_2\|_{0,\omega_K}.
\end{aligned}
\end{equation}

Thus, take $\btau = \btau_1+\btau_2 +\btau_3$ in \eqref{eq:patch},
we have
$$
[\btau]|_F = 
\begin{cases}
(\lambda - u_K)|_F & F\in \partial K,\\ 
0 & \text{otherwise},
\end{cases}
~
(\bar{u}_{\lambda} - \bar{u}_K, \div \btau)_{\omega_K}=0  \quad
\text{and} \quad (\epsilon(\bar{u}_K), \btau)_{\omega_K} = 0. 
$$
In addition, \eqref{eq:tau1}, \eqref{eq:tau2}, and \eqref{eq:tau3-2}
imply that 
$$
 \|\btau\|_{0,\omega_K}\lesssim 
 \|\btau_1\|_{0,\omega_K} \lesssim h_K^{1/2}
 \sin^{-1}(\kappa_0)\|\lambda - \bar{u}_K\|_{0,\partial K}.
$$
Hence, we have
$$
\begin{aligned}
 \|\lambda - \bar{u}_K\|_{0,\partial K}^2 &= (\bar{\bsigma}_
{\lambda}, \btau)_{\omega_K} \lesssim \|\bar{\bsigma}_
{\lambda}\|_{0,\omega_K}
 \|\btau\|_{0,\omega_K} \\ 
 &\lesssim (\sum_{K'\in \omega_K }| \lambda|_{h,K'}^2)^{1/2}  h_K^{1/2}
 \sin^{-1}(\kappa_0) \|\lambda - u_K\|_{0,\partial K},
\end{aligned}
$$ 
which gives rise to \eqref{eq:tr-rigid-stability1}.
\end{proof}

Now, we are in the place to prove the stability and approximation
property of $\Pi_h$.
\begin{lemma}\label{lm:stable-interpolation}
For any $\lambda \in M_{h,k+1}$, it holds that 
\begin{eqnarray}
 \int_{F}  \Pi_h \lambda ~ds &=& \int_{F}  \lambda ~ds \quad \forall F
 \in \cF_h, \label{eq:stable-interpolation1}\\ 
 \|\Pi_h \lambda\|_{A_h} & \lesssim& \|\lambda \|_S
 ,\label{eq:stable-interpolation2}\\
 \|\lambda - Q_h \Pi_h \lambda\|_{0}^2 &\lesssim& h
 \|{\lambda}\|_S^2.\label{eq:stable-interpolation3}
\end{eqnarray}
\end{lemma}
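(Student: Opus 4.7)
The plan is to dispatch the three claims in order, leveraging the construction of $\Pi_h$ and the trace estimate in Lemma \ref{lm:tr-rigid-stability}.

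First, claim \eqref{eq:stable-interpolation1} is essentially built into the definition. By construction $\Pi_{2,h}$ preserves face integrals of its argument, so
\[
\int_F \Pi_h \lambda = \int_F \Pi_{1,h}\lambda + \int_F (\lambda - \Pi_{1,h}\lambda) = \int_F \lambda
\qquad \forall F \in \cF_h.
\]

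For \eqref{eq:stable-interpolation2}, recall that $a_h(\cdot,\cdot)$ splits into the strain part $2\tilde\mu\|\bepsilon(\cdot)\|_0^2$ and the dilatation part $\tilde\lambda\|P_0^h\div(\cdot)\|_0^2$. The divergence term is handled for free by \eqref{eq:stable-interpolation1}: on each element $K$,
\[
\int_K \div \Pi_h \lambda = \int_{\partial K} \Pi_h\lambda \cdot \nu = \int_{\partial K} \lambda \cdot \nu,
\]
so $\|P_0^h \div \Pi_h\lambda\|_{0,K}^2 = |\lambda|_{*,K}^2$, and summing over $K$ and multiplying by $\tilde\lambda$ gives $\tilde\lambda |\lambda|_*^2 \lesssim \|\lambda\|_S^2$ by Theorem \ref{lm:equivalent-norm}. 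For the strain term, split $\Pi_h\lambda = \Pi_{1,h}\lambda + \Pi_{2,h}(\lambda - \Pi_{1,h}\lambda)$. Since $P_{K,{\rm RM}}\lambda \in {\rm RM}(K)$ has vanishing strain, an inverse inequality on $\cP_1$ gives $\|\bepsilon(\Pi_{1,h}\lambda)\|_{0,K} = \|\bepsilon(\Pi_{1,h}\lambda - P_{K,{\rm RM}}\lambda)\|_{0,K} \lesssim h_K^{-1}\|\Pi_{1,h}\lambda - P_{K,{\rm RM}}\lambda\|_{0,K}$. Using the definition of $\Pi_{1,h}$ as a vertex average, the right-hand side is controlled by jumps $(P_{K',{\rm RM}}\lambda - P_{K,{\rm RM}}\lambda)(a)$ across neighbors sharing $a$; a scaling argument together with Lemma \ref{lm:tr-rigid-stability} bounds these by $h_K \sum_{K' \in \omega_K}|\lambda|_{h,K'}^2$. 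For the correction, $\Pi_{2,h}(\lambda - \Pi_{1,h}\lambda)$ vanishes at vertices and its face averages equal $\lambda - \Pi_{1,h}\lambda$, so scaling yields $\|\bepsilon(\Pi_{2,h}(\lambda - \Pi_{1,h}\lambda))\|_{0,K}^2 \lesssim h_K^{-1}\|\lambda - \Pi_{1,h}\lambda\|_{0,\partial K}^2$, which is $\lesssim \sum_{K' \in \omega_K}|\lambda|_{h,K'}^2$ by another application of Lemma \ref{lm:tr-rigid-stability}. Summation over $K$ with finite overlap gives $\|\bepsilon(\Pi_h\lambda)\|_0^2 \lesssim |\lambda|_h^2$, which combined with $2\tilde\mu|\lambda|_h^2 \lesssim \|\lambda\|_S^2$ closes the bound.

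For \eqref{eq:stable-interpolation3}, since $\lambda \in M_{h,k+1}$ and $Q_h$ is the $L^2$ projection onto $M_{h,k+1}$ (on each face), we have
\[
\|\lambda - Q_h\Pi_h\lambda\|_{0,F} = \|Q_h(\lambda - \Pi_h\lambda)\|_{0,F} \leq \|\lambda - \Pi_h\lambda\|_{0,F}.
\]
By triangle inequality and the same estimates that produced the correction bound in Step 2,
\[
\|\lambda - \Pi_h\lambda\|_{0,\partial K}^2 \lesssim \|\lambda - \Pi_{1,h}\lambda\|_{0,\partial K}^2 + \|\Pi_{2,h}(\lambda - \Pi_{1,h}\lambda)\|_{0,\partial K}^2 \lesssim h_K \sum_{K'\in\omega_K} |\lambda|_{h,K'}^2,
\]
using Lemma \ref{lm:tr-rigid-stability} for the first term and a trace/scaling inequality for the second. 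Summing over $K$ and invoking Theorem \ref{lm:equivalent-norm} yields $\|\lambda - Q_h\Pi_h\lambda\|_0^2 \lesssim h\,|\lambda|_h^2 \lesssim h\|\lambda\|_S^2$.

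The main obstacle will be the strain bound in \eqref{eq:stable-interpolation2}: controlling $\|\Pi_{1,h}\lambda - P_{K,{\rm RM}}\lambda\|_{0,K}$ requires carefully relating the vertex averages that define $\Pi_{1,h}$ to the local rigid-body projections on neighboring elements, and bounding the resulting patch jumps of $P_{K,{\rm RM}}\lambda$ via the face estimates of Lemma \ref{lm:tr-rigid-stability}. The rest of the argument is scaling and the already-established equivalence of norms.
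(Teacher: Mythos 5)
Your proposal follows essentially the same route as the paper's proof: it splits $a_h$ into the dilatation and strain parts, handles the former via \eqref{eq:stable-interpolation1} and $|\lambda|_{*,K}$, and controls the strain through the rigid-body projections $P_{K,{\rm RM}}$, the vertex-averaging structure of $\Pi_{1,h}$, inverse inequalities, and Lemma \ref{lm:tr-rigid-stability}, exactly as in the paper. The only deviations are harmless: a bookkeeping slip in the intermediate bound on $\|\Pi_{1,h}\lambda - P_{K,{\rm RM}}\lambda\|_{0,K}^2$, which should carry $h_K^2$ rather than $h_K$ so that the subsequent inverse inequality yields $\|\bepsilon(\Pi_h\lambda)\|_{0,K}^2\lesssim\sum_{K'\in\omega_K}|\lambda|_{h,K'}^2$ (your stated conclusion is the correct one), and a slightly cleaner treatment of \eqref{eq:stable-interpolation3} using $Q_h\lambda=\lambda$ together with the non-expansiveness of $Q_h$ in place of the paper's three-term triangle inequality through $P_{K,{\rm RM}}\lambda$.
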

\begin{proof}
By the definition of $\Pi_h$ in \eqref{eq:stable-interpolation}, we have 
$$
\int_F  \lambda - \Pi_h \lambda~ds = \int_F (I - \Pi_{2,h})(I -
\Pi_{1,h}) \lambda~ds = 0,
$$
which gives rise to \eqref{eq:stable-interpolation1}.

Since $ \|\Pi_h \lambda\|_{A_h}^2 = 2\tilde{\mu} \|\epsilon(\Pi_h
\lambda)\|_0^2 + \tilde{\lambda} \|P_0^h \div(\Pi_h \lambda)\|_0^2$,
we prove the stability \eqref{eq:stable-interpolation2} of $\Pi_h$ part
by part. By \eqref{eq:stable-interpolation1}, we have
\begin{equation}\label{eq:stablity-div}
\begin{aligned}
\|P_0^h \div \Pi_h \lambda \|_{0,K} & = |K|^{-1/2} \left| \int_K \div
(\Pi_h \lambda) ~dx \right| =  |K|^{-1/2} \left| \int_{\partial K}
(\Pi_h \lambda) \cdot \nu ~ds \right| \\
&=|K|^{-1/2} \left| \int_{\partial K} \lambda \cdot \nu ~ds \right| =
|\lambda|_{*, K}.
\end{aligned}
\end{equation}

Next, we estimate $\|\epsilon(\Pi_h \lambda) \|_{0,K}$.  First, we show
the stability of $\Pi_{1,h}$ as 
\begin{equation}\label{eq:stablity-L2I1h}
\begin{aligned}
\|\Pi_{1,h}\lambda - P_{K,{\rm RM}}\lambda\|_{0,K}^2 & \lesssim h_K^n
\sum_{a\in \cN_K} | (\Pi_{1,h}\lambda)(a) - (P_{K,{\rm RM}}\lambda)(a)|^2\\
& \lesssim h_K^n \sum_{a\in \cN_K}  \sum_{\substack{\bar{K}_1 \cap
\bar{K}_2 = \bar{F} \\ \bar{F}\ni a }}| (P_{K_1,{\rm RM}}\lambda)(a) -
(P_{K_2,{\rm RM}} \lambda)(a)|^2 \\
& \lesssim h_K^n \sum_{a\in \cN_K}  \sum_{\substack{\bar{K}_1\cap
\bar{K}_2 = \bar{F} \\  \bar{F}\ni a }}\big| {\lambda}|_F(a)
-(P_{K_1,{\rm RM}}\lambda)(a) \big|^2 + \big|  {\lambda}|_F(a) -
(P_{K_2,{\rm RM}}\lambda)(a)  \big|^2\\
& \lesssim  \sum_{a\in \cN_K}  \sum_{\substack{\bar{K}_1 \cap
  \bar{K}_2 = \bar{F} \\ \bar{F} \ni a }}h_{K_1} \| {\lambda}
-P_{K_1,{\rm RM}}\lambda\|_{0,\partial K_1}^2 +h_{K_2} \| {\lambda}-
P_{K_2, {\rm RM}}\lambda \|_{0,\partial K_2}^2\\
& \lesssim  \sum_{K' \in \omega_K}  h_{K'}\| {\lambda}-
P_{K',{\rm RM}}\lambda \|_{0,\partial K'}^2. 
%\quad \text{(by Lemma \ref{lm:RMstability})} \\
\end{aligned}
\end{equation}
Then, by the triangle inequality and inverse inequality, we have 
\begin{equation}\label{eq:stablity-L2Ih}
\begin{aligned}
\|\Pi_h \lambda - P_{K,{\rm RM}} \lambda\|_{0,K}^2 &\lesssim
\|\Pi_h\lambda - \Pi_{1,h}\lambda\|_{0,K}^2 +  \|\Pi_{1,h} \lambda -
P_{K,{\rm RM}}\lambda\|_{0,K}^2\\
&=   \|\Pi_{2,h} (I - \Pi_{1,h})\lambda\|_{0,K}^2 + \|\Pi_{1,h}
\lambda - P_{K,{\rm RM}}\lambda\|_{0,K}^2\\
&\lesssim h_K   \| (I - \Pi_{1,h})\lambda\|_{0,\partial K}^2 +
\|\Pi_{1,h} \lambda - P_{K,{\rm RM}} \lambda\|_{0,K}^2\\
&\lesssim   \sum_{K' \in \omega_K}  h_{K'}\| {\lambda}-
P_{K',rm}\lambda \|_{0,\partial K'}^2. \quad \text{(by
\eqref{eq:stablity-L2I1h})}
\end{aligned}
\end{equation}
Hence, by the inverse inequality, we have  
\begin{equation}\label{eq:stability-rigid}
\begin{aligned}
\|\epsilon(\Pi_h\lambda)\|_{0,K}^2 &= \|\epsilon(\Pi_h\lambda -
P_{K,{\rm RM}} \lambda ) \|_{0,K}^2\\
&\lesssim h_K^{-2} \|\Pi_h\lambda - P_{K,{\rm RM}}\lambda \|_{0,K}^2\\
&\lesssim \sum_{K' \in \omega_K}  h_{K'}^{-1}\| {\lambda}-
P_{K',{\rm RM}}\lambda \|_{0,\partial K'}^2.  \quad \qquad \text{(by
    \eqref{eq:stablity-L2Ih})} 
\end{aligned}
\end{equation}
By virtue of \eqref{eq:tr-rigid-stability}, we sum
\eqref{eq:stablity-div} and \eqref{eq:stability-rigid} over all
elements to obtain
\begin{equation*}
\|\Pi_h\lambda\|_{A_h} \lesssim \|\lambda\|_S.
\end{equation*}

Next, the approximation property \eqref{eq:stable-interpolation3}
can be obtained by summing the following inequalities over all
elements:
\begin{equation}
\begin{aligned}
\|\lambda - Q_h \Pi_h \lambda\|_{0, \partial K}^2  &\lesssim 
\|(I-Q_h)\Pi_h \lambda\|_{0,\partial K}^2 +  
\|\lambda - P_{K,{\rm RM}} \lambda  \|_{0, \partial K}^2 + \|P_{K,
{\rm RM}} \lambda - \Pi_h \lambda\|_{0, \partial K}^2 \\
&\lesssim h_K^4 |\Pi_h\lambda|_{2,\partial K}^2 + \sum_{K' \in
  \omega_K} \| {\lambda}- P_{K',{\rm RM}}\lambda \|_{0,\partial K'}^2
  \quad \text{(by \eqref{eq:stablity-L2Ih})} \\
&\lesssim h_K \|\epsilon(\Pi_h \lambda)\|_{0,K}^2 + \sum_{K' \in
  \omega_K} \| {\lambda}- P_{K',{\rm RM}}\lambda \|_{0,\partial K'}^2.
\end{aligned}
\end{equation}
This completes the proof.
\end{proof}

Let $\{W_i\}_{i=1}^J$ be the local spaces of $W_h$ associated with 
the overlapping domain decomposition $\{\Omega_i\}_{i=1}^J$. According
to \cite{schoberl1999multigridthesis}, we have the following lemma.
 
\begin{lemma}\label{lm:p2-decomposition}
For any $w_h \in W_h$, there exists a decomposition $w_h =
\tilde{I}_H^h w_H + \sum_{i=1}^J w_i $, such that $w_H\in W_H, w_i \in
W_i$, and 
\begin{equation} \label{eq:p2-decomposition}
\|w_H\|_{A_H}^2 + \sum_{i=1}^J \|w_i\|_{A_h}^2 \lesssim
\frac{H^2}{\delta^2} \|w_h\|_{A_h}^2. 
\end{equation} 
\end{lemma}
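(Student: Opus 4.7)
This stable decomposition is precisely Sch\"oberl's robust two-level splitting for the primal $P_2$ linear-elasticity problem (cf. \cite{schoberl1999multigridthesis}), now applied to the norms $\|\cdot\|_{A_H}$ and $\|\cdot\|_{A_h}$ involving the projected divergences $P_0^H\div$ and $P_0^h\div$. I would follow the standard two-level Schwarz blueprint: first build a $\tilde\lambda$-robust coarse approximation $w_H\in W_H$ of $w_h$, and then cut the remainder $e:=w_h-\tilde I_H^h w_H$ with the partition of unity $\{\theta_i\}$ to produce the local pieces $w_i\in W_i$.

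For the coarse component, I would define $w_H$ by a Cl\'ement/Scott--Zhang quasi-interpolation of $w_h$ at coarse vertices, modified on each coarse face $F\subset\partial K_H$ by a low-order normal-flux edge bubble enforcing $\int_F w_H\cdot\nu_F\,ds = \int_F w_h\cdot\nu_F\,ds$. The standard $H^1$-stability and $L^2$-approximation of Scott--Zhang survive this correction; because $\tilde I_H^h$ preserves boundary traces, it gives $P_0^H\div\tilde I_H^h w_H=P_0^H\div w_h$ on every $K_H\in\cT_H$. Together with \eqref{eq:p2stability}, one then obtains $\|w_H\|_{A_H}\lesssim \|w_h\|_{A_h}$ as well as the coarse $L^2$-approximation $\|e\|_{0,K_H}\lesssim H\,\|\epsilon(w_h)\|_{0,\omega_{K_H}}$ and the crucial mean-zero property $P_0^H\div e=0$. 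Setting $w_i:=I_h(\theta_i e)$ with $I_h$ the $P_2$ nodal interpolant on $\cT_h$, I get $\sum_i w_i=e$, and the strain part is handled by the classical partition-of-unity estimate
\begin{equation*}
\sum_i\|\epsilon(w_i)\|_0^2\lesssim N_c\bigl(\|\epsilon(e)\|_0^2+\delta^{-2}\|e\|_0^2\bigr)\lesssim N_c\bigl(1+\tfrac{H^2}{\delta^2}\bigr)\|\epsilon(w_h)\|_0^2.
\end{equation*}

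The main obstacle is to obtain the same $(1+H^2/\delta^2)$ bound for the $\tilde\lambda$-weighted divergence piece $\tilde\lambda\sum_i\|P_0^h\div w_i\|_0^2$ \emph{independently of} $\tilde\lambda$. A direct commutator estimate $\|P_0^h\div w_i\|_0^2\lesssim \|P_0^h\div e\|_{0,\Omega_i}^2+\delta^{-2}\|e\|_{0,\Omega_i}^2$ would otherwise leave a spurious $\tilde\lambda\delta^{-2}\|e\|_0^2$ term and, via $\|e\|_0\lesssim H\|\epsilon(w_h)\|_0$, an uncontrollable factor of $\tilde\lambda\|\epsilon(w_h)\|_0^2$. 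This is where the coarse normal-flux correction becomes essential: since $P_0^H\div e=0$, a local Poincar\'e-type argument on each $K_H$ sharpens $\|P_0^h\div e\|_{0,K_H}$ to the deviation $\|P_0^h\div w_h-P_0^H\div w_h\|_{0,K_H}\le\|P_0^h\div w_h\|_{0,K_H}$, and a refined cutoff estimate upgrades the commutator error into a bound proportional to $\|P_0^h\div w_h\|_0^2$ times $H^2/\delta^2$. Assembling the two parts delivers \eqref{eq:p2-decomposition}.
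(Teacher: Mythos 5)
The paper does not actually prove this lemma: it is quoted verbatim from Sch\"oberl's thesis \cite{schoberl1999multigridthesis}, so the relevant comparison is between your sketch and Sch\"oberl's argument. Your coarse-space construction is essentially his: a quasi-interpolant corrected on coarse edges by quadratic normal bubbles so that $\int_F w_H\cdot\nu_F\,ds=\int_F w_h\cdot\nu_F\,ds$, whence $P_0^H\div \tilde{I}_H^h w_H=P_0^H\div w_h$ and, with \eqref{eq:p2stability}, $\|w_H\|_{A_H}\lesssim\|w_h\|_{A_h}$. That part is sound.

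The gap is in the local splitting. Setting $w_i=I_h(\theta_i e)$ and invoking a ``refined cutoff estimate'' does not close the argument, and the property $P_0^H\div e=0$ does not rescue it. Even with an edge-flux-preserving interpolant one has $P_0^h\div w_i|_K=|K|^{-1}\int_K(\theta_i\div e+\nabla\theta_i\cdot e)\,dx$ on each fine element $K$; after squaring, summing and multiplying by $\tilde{\lambda}$ this produces terms of size $\tilde{\lambda}\|\div e\|_0^2\lesssim\tilde{\lambda}\|\epsilon(w_h)\|_0^2$ and $\tilde{\lambda}\delta^{-2}\|e\|_0^2\lesssim\tilde{\lambda}H^2\delta^{-2}\|\epsilon(w_h)\|_0^2$, neither of which is bounded by $(H^2/\delta^2)\|w_h\|_{A_h}^2$ uniformly in $\tilde{\lambda}$, since $\|w_h\|_{A_h}^2$ contains only $\tilde{\lambda}\|P_0^h\div w_h\|_0^2$ and not $\tilde{\lambda}\|\epsilon(w_h)\|_0^2$. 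The condition $P_0^H\div e=0$ kills only the means of $\div e$ over coarse elements; it gives no control of $\|\div e\|_0$ or of the fine-scale projection $P_0^h\div e$ elementwise, so the proposed ``local Poincar\'e-type sharpening'' has nothing to act on. A correct proof (this is the heart of Sch\"oberl's analysis) requires an ingredient you have omitted: using the local inf-sup stability of the $\cP_2$--$\cP_0$ pair, one first splits $e$ into a discretely divergence-free part plus a remainder whose $H^1$-norm is controlled by $\|P_0^h\div e\|_0$ (which, unlike $\|\epsilon(e)\|_0$, carries a $\tilde{\lambda}$-robust bound), and then corrects each cutoff piece by a local divergence-free modification so that the $\tilde{\lambda}$-weighted term of each $w_i$ is genuinely controlled. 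Without that discrete Helmholtz-type decomposition and local correction, \eqref{eq:p2-decomposition} cannot be obtained uniformly in the Lam\'e constants.
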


\begin{theorem} \label{lm:stable-decomposition}
For any $\lambda \in M_{h,k+1}$, there exists a decomposition $\lambda
= I_{H}^h w_H + \sum_{i=1}^J \lambda_i$ such that $w_H\in W_H,
\lambda_i \in M_i$, and  
\begin{equation} \label{eq:stable-decomposition} 
\|w_H\|_{A_H}^2 + \sum_{i=1}^J\|\lambda_i\|_{S}^2 \lesssim
  \frac{H^2}{\delta^2} \|\lambda\|_{S}^2.
\end{equation} 
\end{theorem}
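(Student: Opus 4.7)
The plan is to lift $\lambda$ into the continuous $\cP_2$-space $W_h$ via the interpolation $\Pi_h$, invoke Sch\"oberl's stable decomposition of Lemma~\ref{lm:p2-decomposition} there, push everything back down with $Q_h$, and finally absorb the leftover residual $e := \lambda - Q_h\Pi_h\lambda$ into the local pieces through a face-partition.

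Concretely, I would apply Lemma~\ref{lm:p2-decomposition} to $\Pi_h\lambda \in W_h$ to obtain $w_H \in W_H$ and $w_i \in W_i$ with
\[
\Pi_h \lambda = \tilde I_H^h w_H + \sum_{i=1}^J w_i,
\qquad
\|w_H\|_{A_H}^2 + \sum_{i=1}^J \|w_i\|_{A_h}^2 \lesssim \frac{H^2}{\delta^2}\|\Pi_h\lambda\|_{A_h}^2,
\]
then apply $Q_h$ and use $I_H^h = Q_h\tilde I_H^h$ from \eqref{eq:def-intergrid-operator} to get $Q_h\Pi_h\lambda = I_H^h w_H + \sum_i Q_h w_i$. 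Next I would build $e_i$ by assigning each interior face $F$ to a single subdomain $\Omega_{i(F)}$ containing it and setting $e_i|_F := e|_F$ if $i = i(F)$ and zero otherwise; then $\lambda_i := Q_h w_i + e_i$ yields a decomposition $\lambda = I_H^h w_H + \sum_i \lambda_i$ with $\lambda_i \in M_i$, since $Q_h w_i$ vanishes on faces outside $\mathrm{supp}(w_i) \subset \Omega_i$ and $e_i$ vanishes outside the faces assigned to $\Omega_i$.

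For the estimate, the coarse piece is immediate: $\|w_H\|_{A_H}^2 \lesssim \frac{H^2}{\delta^2}\|\Pi_h\lambda\|_{A_h}^2 \lesssim \frac{H^2}{\delta^2}\|\lambda\|_S^2$ via \eqref{eq:stable-interpolation2}. For the local parts, I would split $\|\lambda_i\|_S^2 \leq 2\|Q_h w_i\|_S^2 + 2\|e_i\|_S^2$. The argument inside the proof of Lemma~\ref{lm:I-stability} in fact establishes $\|Q_h w\|_S \lesssim \|w\|_{A_h}$ for \emph{every} $w \in W_h$, not only for harmonic extensions, so $\sum_i \|Q_h w_i\|_S^2 \lesssim \sum_i \|w_i\|_{A_h}^2 \lesssim \frac{H^2}{\delta^2}\|\lambda\|_S^2$. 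For the residual, the key observation is that \eqref{eq:stable-interpolation1} forces $\int_F e\,ds = 0$ on every face, whence $|e_i|_{*, K} = 0$ on every $K \in \cT_h$; Theorem~\ref{lm:equivalent-norm} then kills the $\tilde\lambda$-term and leaves $\|e_i\|_{S,K}^2 \lesssim 2\tilde\mu\,|e_i|_{h,K}^2 \lesssim 2\tilde\mu\, h^{-1}\|e_i\|_{0,\partial K}^2$ after a routine scaling argument. Summing over $i$ and $K$, using $\sum_i\|e_i\|_0^2 = \|e\|_0^2$ together with \eqref{eq:stable-interpolation3}, one obtains $\sum_i\|e_i\|_S^2 \lesssim \|\lambda\|_S^2$ with a constant independent of $h$ and the Lam\'e constants.

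The main obstacle will be securing this uniform-in-parameters bound on the residual: a blunt application of Lemma~\ref{lm:L2-upper} would introduce a factor $2\tilde\mu + \tilde\lambda$ and destroy robustness in the nearly-incompressible regime. The essential trick is that $\Pi_h$ preserves face-integrals exactly by \eqref{eq:stable-interpolation1}, so the residual is ``locally divergence-free'' in the sense $|e|_{*, K} = 0$, a property that survives any face-partition and is precisely what lets Theorem~\ref{lm:equivalent-norm} drop the $\tilde\lambda$-contribution.
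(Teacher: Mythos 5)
Your proposal is correct and follows essentially the same route as the paper: split off $Q_h\Pi_h\lambda$, decompose $\Pi_h\lambda$ via Lemma~\ref{lm:p2-decomposition}, transfer with $Q_h$, and control the residual by exploiting that it has zero mean on every face, so that the $\tilde{\lambda}$-term in Theorem~\ref{lm:equivalent-norm} drops out. The only difference is minor and technical: the paper localizes the residual as $Q_0^{\bot}(\theta_i\lambda_0)$ using the partition of unity followed by reprojection onto the zero-face-mean subspace, whereas you assign each interior face wholesale to a single subdomain --- both constructions preserve the zero face-means and lead to the same uniform bound.
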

\begin{proof}
We first split $\lambda$ into two components 
$$
\lambda = Q_h \underbrace{\Pi_h \lambda}_{w_h} + \underbrace{(\lambda
- Q_h\Pi_h \lambda)}_{\lambda_0}. 
$$
According to the Lemma \ref{lm:stable-interpolation}, we know that 
$$
\lambda_0 \in M_{h,0}^{\bot} \quad \text{and}\quad \|\lambda_0 \|_0^2
\lesssim h \|\lambda\|_S^2. 
$$
Here, $M_{h,0}^{\perp}$ is the $L^2$ orthogonal complement of $M_{h,0}$
in the space $M_{h,k+1}$. Denote the $L^2$ projection on
$M_{h,0}^{\perp}$ by $Q_0^{\perp}$.  Let $w_h =  \tilde{I}_H^h w_H
+\sum_{i=1}^J w_i $ be the decomposition in Lemma
\ref{lm:p2-decomposition}. We define the $\lambda_i$ as 
$$
\lambda_i =  Q_h w_i + Q_0^{\bot} (\theta_i \lambda_0) \qquad
j = 1, 2, \cdots, J.
$$
Thus, $\lambda = I_H^h w_H + \sum_{i=1}^J \lambda_i$.  By the property
of the partition of unity, Theorem \ref{lm:equivalent-norm}, Lemma
\ref{lm:I-stability}, \ref{lm:stable-interpolation}, and  
\ref{lm:p2-decomposition}, we have
$$ 
\begin{aligned}
\sum_{i=1}^J \|{\lambda}_i\|_{S}^2 &= \sum_{i=1}^J \sum_{K\in
\cT_h\cap \Omega_i} \|\lambda_i\|_{S,K}^2 \\ 
&\lesssim \sum_{i=1}^J \| Q_h w_i \|_{S}^2 + \sum_{i=1}^J \sum_{K
\in \cT_h\cap \Omega_i } \| Q_0^\perp (\theta_i \lambda_0) \|_{S,K}^2 \\ 
&\lesssim \sum_{i=1}^J \| Q_h w_i \|_{S}^2 +  
\sum_{i=1}^J \sum_{K \in \cT_h\cap \Omega_i } h_K^{-1}\|Q_0^\perp
(\theta_i \lambda_0)\|_{0,\partial K}^2  \qquad (\text{by }
    \eqref{eq:equivalent-energy-norm})\\ 
&\lesssim \sum_{i=1}^J  \|Q_h w_i \|_{S}^2 + h^{-1}\| \lambda_0
\|_{0}^2  \\ 
&\lesssim \frac{H^2}{\delta^2} \|w_h\|_{A_h}^2 +  h^{-1}\| \lambda_0
\|_{0}^2 ~ \qquad \quad \qquad (\text{by Lemma } \ref{lm:I-stability} \text{ and }
    \eqref{eq:p2-decomposition})\\ 
&\lesssim \frac{H^2}{\delta^2} \| \lambda \|_{S}^2, \qquad \qquad
\qquad \qquad \qquad \qquad \qquad (\text{by }
\eqref{eq:stable-interpolation2} \text{ and } \eqref{eq:stable-interpolation3})
\end{aligned}
$$
and
$$
\|w_H\|_{A_H} \lesssim \frac{H^2}{\delta^2}\|w_h\|_{A_h} \lesssim
\frac{H^2}{\delta^2}\|\lambda\|_S.
$$
This completes the proof.
\end{proof}

%% end of file %% 
\section{Numerical Examples} \label{sec:numerical}

In this section, we give several numerical examples to present the
optimal convergence order of the hybridized mixed discretization as
well as the uniform convergence of the iterative solvers. All the
numerical experiments are implemented using the iFEM package
\cite{chen2008ifem}.

\subsection{Convergence Order Tests}
To verify the convergence order for the discretization, we take the
domain to be unit square $\Omega = (0,1)^2$ and choose the data with
the exact solution given by 
\begin{equation}\label{test_example}
u = 
\begin{pmatrix}
\re^{x-y} xy(1-x)(1-y) \\
\sin(\pi x)\sin(\pi y)
\end{pmatrix}.
\end{equation}
We apply a homogeneous boundary condition that $u = 0$ on $\partial
\Omega$. The Lam\'{e} constants are set as $\tilde{\mu} = 1/2$ and
$\tilde{\lambda} =1$.  The exact stress function $\bsigma$ and the
load function $f$ can be analytically derived from the
\eqref{equ:elasticity} for a given $u$.  We use the MATLAB backslash
solver for the system of the multiplier if the grid is singular-vertex
free, and the conjugate gradient method with diagonal preconditioning
otherwise. 

%% Example: lowest order %% 
\begin{example}[Lowest order method on macro-simplex grid] 
\label{ex:lowest-order} 

Our first numerical example is carried out on the macro-simplex grid,
which can be obtained from any triangulation by connecting the
vertices of each triangle to the barycenter, thereby subdividing the
triangle into three, see Figure \ref{fig:hct}.

\begin{figure}[!htbp]
\centering
\includegraphics[scale =.6]{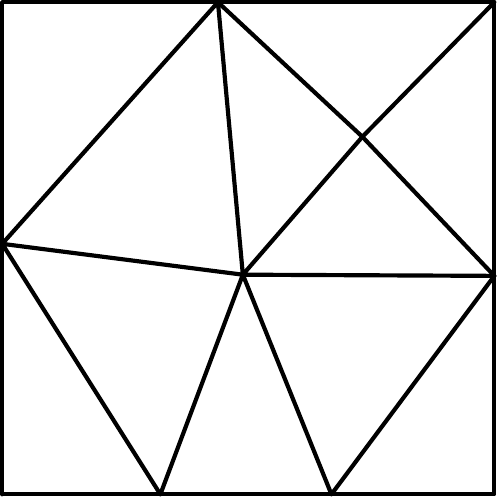}
\quad \quad \quad \quad
\includegraphics[scale =.6]{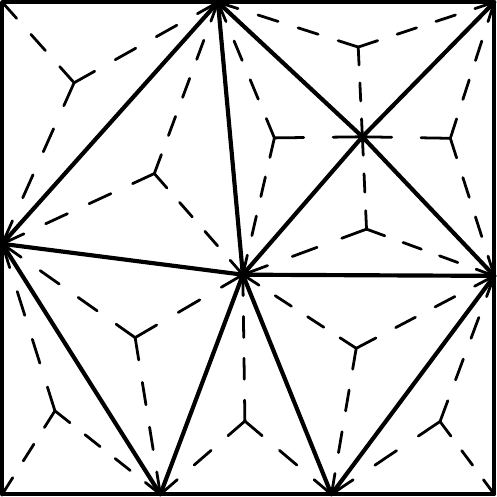}
\caption{Unstructured grid and a typical macro-simplex grid.}
\label{fig:hct} 
\end{figure}

\begin{table}[!htbp]
\caption{Errors and observed convergence orders on macro-simplex
grids, $k=0$.}\centering
{\small{
\begin{tabular}{@{}c|cc|cc|cc@{}}
  \hline
  $1/h$	&$\|u -u_h\|_0$	& Order	& 
  $\|\bsigma-\bsigma_h\|_0$ & Order & 
  $\|\div\bsigma-\div\bsigma_h\|_0$	& Order	\\
   \hline
  4		&9.5309e-2	&--	  &2.0147e-1	&--	  &2.6589e-0	&--	  \\
  8		&4.5289e-2	&1.07	&4.9971e-2	&2.01	&1.2995e-0	&1.03	\\
  16	&2.2009e-2	&1.04	&1.2357e-2	&2.01	&6.3735e-1	&1.02	\\
  32	&1.0976e-2	&1.00	&3.1761e-3	&1.96	&3.1827e-1	&1.00	\\
  64	&5.4797e-3	&1.00	&8.0961e-4	&1.97	&1.5892e-1	&1.00 
   \\ \hline
\end{tabular} 
}}
\label{numerical_result_HCT}
\end{table}

After computing \eqref{eq:hybrid-system} for various values of $h$, we
calculate the errors between the exact solution and the discrete
solution and report them in Table \ref{numerical_result_HCT}.  The
table indicates the optimal convergence orders of
$\mathcal{O}(h)$ for both stress and displacement in the  $H(\div)$
and $L^2$ norm, respectively.
\end{example}

%% High order method %% 
\begin{example}[High order method]

We apply the finite element method with $k=2$ for the high order case,
which is the lowest order method that works for any 2D regular grid.
The computations are performed on both the uniform grid and crisscross
grid as depicted in Figure \ref{fig:crisscross}.

We list the errors and observed convergence orders of the computed
solution on the uniform grid in Table \ref{numerical_result_uniform}.
It clearly indicates that $\|u - u_h\|_0 = \mathcal{O}(h^3)$ and
$\|\bsigma - \bsigma_h\|_{H(\div)} = \mathcal{O}(h^3)$ which agrees with  Theorem \ref{thm:error-estimate}. In addition, we
observe that $\|\bsigma - \bsigma_h\|_0 = \mathcal{O}(h^4)$. Similar
results can be observed on the crisscross grid as shown in Table
\ref{numerical_result_singular}.  As discussed in Section
\ref{sec:hybrid}, the singular vertices do not affect the
well-posedness of the original saddle point problem but only results
in a SPSD system for the Lagrange multiplier, which can be solved
efficiently by the Krylov solvers.

\begin{figure}[!htbp]
\centering
\includegraphics[scale =.6]{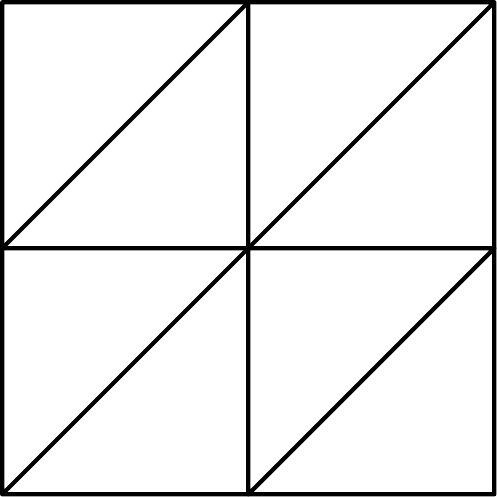}
\quad \quad \quad \quad
\includegraphics[scale =.6]{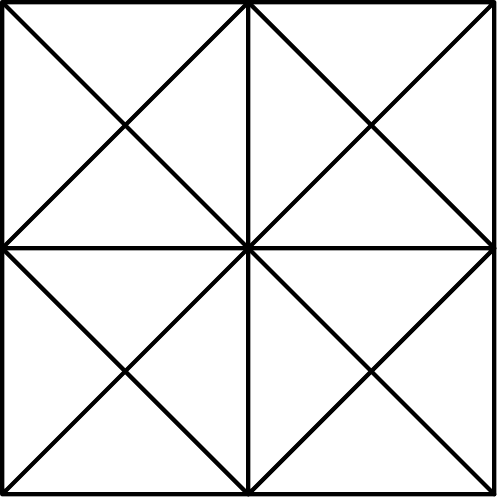}

\caption{Uniform grid and crisscross grid}
\label{fig:crisscross} 
\end{figure}

\begin{table}[!htbp]
\caption{Errors and observed convergence orders on uniform grids, $k=2$.}\centering
{\small{
\begin{tabular}{@{}c|cc|cc|cc@{}}
   \hline
  $1/h$	&$\|u -u_h\|_0$	& Order	& 
  $\|\bsigma-\bsigma_h\|_0$ & Order & 
  $\|\div\bsigma-\div\bsigma_h\|_0$	& Order\\ \hline
  4		&2.1758e-3	&-- 	&2.0260e-3	&--	  &6.2558e-2	&--	  \\
  8		&2.7561e-4	&2.98	&1.5145e-4	&3.89	&7.9274e-3	&2.98	\\
  16	&3.4569e-5	&2.99	&9.7454e-6	&3.95	&9.9431e-4	&2.99	\\
  32	&4.3248e-6	&2.99	&6.1737e-7	&3.98	&1.2439e-4	&3.00	\\
  64	&5.4072e-7	&3.00	&3.8838e-8	&3.99	&1.5552e-5	&3.00\\	
   \hline
\end{tabular}}}
\label{numerical_result_uniform}
\end{table}

\begin{table}[!htbp]
\caption{Errors and observed convergence orders on crisscross
  grids, $k=2$. }\centering
{\small{
\begin{tabular}{@{}c|cc|cc|cc@{}}
   \hline
  $1/h$	&$\|u -u_h\|_0$	& Order	& 
  $\|\bsigma-\bsigma_h\|_0$ & Order & 
  $\|\div\bsigma-\div\bsigma_h\|_0$	& Order \\ \hline
  4		&5.7633e-4		&--	  &3.1371e-4	&--	  &1.7027e-2	&-- 	\\
  8		&7.2355e-5		&2.99	&2.0057e-5	&3.96	&2.1361e-3	&2.99	\\
  16	&9.0541e-6		&2.99	&1.2672e-6	&3.98	&2.6726e-4	&2.99	\\
  32	&1.1320e-6		&3.00	&7.9629e-8	&3.99	&3.3416e-4	&3.00	\\
  64	&1.4151e-7		&3.00	&4.9899e-9	&4.00	&4.1772e-5	&3.00	\\
   \hline
\end{tabular}}}
\label{numerical_result_singular}
\end{table}

\end{example}

%% Overlapping Schwarz method %% 
\subsection{Iterative Solver Tests}

In this subsection, we investigate the robustness of our iterative solvers 
with respect to both the mesh size $h$ and Poisson's
ratio $\tilde{\nu}$. In all the numerical experiments below, we
choose the data such that the exact solution is given by  
\eqref{test_example}. High-order discretization of $k=2$ is applied on
the uniform grids.  The Lam\'{e} constants are set as $\tilde{\mu} =
1/2$ and
$$
\tilde{\lambda} = \frac{\tilde{\nu}}{1-2\tilde{\nu}},
$$
where $\tilde{\nu}$ represents the Poisson's ratio that goes to $0.5$
when the material becomes increasingly incompressible. 

We run the various preconditioning Conjugate Gradient (PCG) computations with zero initial guess and a
stopping criterion whereby the relative residual is smaller than
$10^{-6}$.  We verify the reasonableness of our choices for
Schwarz smoother, intergrid transfer operator and coarse
solvers in the following numerical experiments.

\begin{example}[One-level Schwarz preconditioner]

This example is to verify the $\tilde{\nu}$-independent property of
the Schwarz method on the fine grid. Clearly any local space defined on
the vertex patch (edges that share the same vertex) belongs to one
subspace defined in \eqref{eq:local-space-M} at least. Hence, the
corresponding Schwarz method would be uniform with respective to
$\tilde{\nu}$.  We also test the other two choices of the space
decompositions with supported sets on edge patches and element
patches, respectively. 

Table \ref{tab:mini-subdomain} presents the number of iterations of
PCG with symmetrized multiplicative Schwarz preconditioner
for different decompositions.  The mesh size is set as $h=1/4$. 
Only the decomposition consisting of vertex
patches provides a $\tilde{\nu}$-independent method. 
%%This
%%phenomenon coincides with our theoretical analysis that the
%%near kernel of multiplier is characterized by the weak $\div$-free
%%subspace of $\cP_2$, which has local basis with supported set on the
%%vertex patches. %{\color{red} One more sentence: We will choose the
%decomposition xxx in the following examples. Each local space is
%associated with a vertex patch. }

\begin{table}[!htbp]
\caption{Number of iterations of PCG: One-level multiplicative
Schwarz preconditioner with subspaces supported on edges, elements, and
vertex patches.}\centering
{\footnotesize{
\begin{tabular}{c|cccccc@{}}
   \hline
  \backslashbox[4mm]{Subdomains} {$\tilde{\nu}$} 		&	$0.49$	&	$0.499$&
    $0.4999$&	$0.49999$&	$0.499999$&	$0.4999999$\\  \hline
%  Points	&	50	&	132	&	395	&	968	&	1593	&	2178		&	2798	&	3469\\
  Edges 	&	36	&	59	&	79	&	109	&	131	&	154		\\
  Elements	&	15	&	24	&	33	&	45	&	54	&	62	\\
 Vertex Patches&	10	&	12	&	13	&	13	&	14	&	14\\
  \hline
\end{tabular}}}
\label{tab:mini-subdomain}
\end{table}

\end{example}

%% Coarse space and intergrid operator %% 
\begin{example}[Two-level Schwarz preconditioners]

We now validate the robustness of the two-level Schwarz preconditioner. We note that the $\cP_2$ Lagrange finite element space $W_H$ is used
as coarse space due to its d.o.f. that preserve rigid-body motion as well as the moments on the edges, see Lemma
\ref{lm:stable-interpolation}.  The fine grid $\cT_h=\{K_h\}$ is
refined uniformly from the coarse grid $\cT_H=\{K_H\}$.  Hence, the
overlap is set as $\delta = h$ and the ratio $H/\delta = 2$. The
intergrid transfer operator is defined as $I_H^h = Q_h \tilde{I}_H^h$
in \eqref{eq:def-intergrid-operator}.

Table \ref{table:intergrid} lists the number of iterations of PCG
using the additive Schwarz preconditioner \eqref{eq:additive-precond2}
and the corresponding symmetrized multiplicative Schwarz
preconditioner. This result clearly shows the robustness of the
Schwarz preconditioner in agreement with the Theorem
\ref{thm:additive-condition-number}. 
\begin{table}[!htbp]
\caption{Number of iterations of PCG: Two-level additive Schwarz
  preconditioner (left) and symmetrized multiplicative Schwarz
    preconditioner (right)
}\centering
{\footnotesize{
\begin{tabular}{c|cccccc@{}}
   \hline
\backslashbox{$1/h$}{$\tilde{\nu}$}&	$0.49$	&	$0.499$&	$0.4999$&	$0.49999$&	$0.499999$&	$0.4999999$\\  \hline
  $4$&	17, 3&	18, 4&	21, 4&	23, 4&	23, 4&	23, 4\\
  $8$&	17, 4&	20, 4&	25, 4&	27, 5&	28, 5&	29, 5\\
  $16$&	18, 4&	20, 4&	26, 5&	28, 5&	29, 5&	29, 5\\
  $32$&	18, 4&	20, 4&	25, 5&	27, 5&	28, 5&	29, 5\\
   \hline
\end{tabular}}}
\label{table:intergrid}
\end{table}
\end{example}

\begin{example}[Multilevel preconditioner]
We test the scalability of a multilevel preconditioner. In
this test, we use $W_H$ (i.e., continuous space of piecewise $(\cP_2)^2$) 
as the coarse space. The intergrid transfer operator and the
overlapping subdomains and are the same as those of the second
test. Instead of using an exact solver for $A_H$, we solve the coarse
problem approximately using a W-2-2 cycle in
\cite{schoberl1999multigrid}. Table  \ref{table:multilevel} shows the
uniform convergence of the multilevel symmetrized multiplicative
preconditioner.
 
%\begin{table}[!htbp]
%\caption{Number of iterations of PCG, Two-level Multiplicative Schwarz
%  preconditioner with different coarse solvers.}\centering
%{\footnotesize{
%\begin{tabular}{c|ccccccccc@{}}
%   \hline
%  \backslashbox{Coarse Solver  }{$1/h$}&	$4$&	$8$&	$16$&	$32$&	$64$\\  \hline
%%   $S_0^{-1}$ (Additive) &	17&	19&	19&	19&	19\\
%%  $A_H^{-1}$ (Additive)&	18&	20&	20&	20&	20\\
%  $S_0^{-1}$ &	4&	4&	4&	4&	4\\
%  $A_H^{-1}$&	4&	4&	4&	4&	4\\
%   \hline
%\end{tabular}}}
%\label{table:coarse-solver}
%\end{table}

\begin{table}
\caption{Number of iterations of PCG, multilevel symmetrized
  multiplicative preconditioner.}\centering
{\footnotesize{
\begin{tabular}{c|cccccc@{}}
   \hline
\backslashbox{$1/h$}{$\tilde{\nu}$}&	$0.49$	&	$0.499$&	$0.4999$&
$0.49999$&	$0.499999$&	$0.4999999$\\  \hline
  $4$&	4&	5&	5&	5&	5&	5\\
  $8$&	4&	6&	7&	7&	7&	7\\
  $16$&	5&	6&	7&	7&	7&	7\\
  $32$&	5&	6&	7&	7&	7&	7\\
   \hline
\end{tabular}}}
\label{table:multilevel}
\end{table}

\end{example}

%% end of file %%

\section{Concluding Remarks} \label{sec:concluding}
Motivated by the critical observation on the inter-element jump of the
piecewise discontinuous symmetric-matrix-valued polynomials, we
propose a family of hybridizable mixed finite elements for linear
elasticity. These methods extend the works in \cite{arnold2002mixed,
arnold2008finite, hu2015family, hu2015finite} by relaxing the
continuity of the discrete stress on the grid vertices while
preserving the symmetry and $H(\div)$ conformity in stress
approximation. By hybridization, the solution cost for our
discretization is dominated by the cost of solving the global system
of the Lagrange multiplier. To develop robust solvers, we adopt  
the Schwarz method on the fine grid and the primal method as a coarse
problem. The key to proving the uniform convergence of our iterative
solvers is the construction of the interpolation operator $I_h$, which
is stable with the approximation property (see Lemma
\ref{lm:stable-interpolation}).  The new discretization, which
preserves the physical structure of stress,  along with the robust
solver provides a new competitive approach for stress analysis in
computational structure mechanics. 

\section*{Appendix.\ Proofs of Lemmas \ref{lm:local-basis} and
  \ref{lm:stability-local-jump} }\label{sec:appendix}

%% proof of local basis 
\begin{proof}[Proof of Lemma \ref{lm:local-basis}]
Denote the set of all $k+1$ degree Lagrange nodes in $\cT_h$ by
$A_{h,k+1}$. For any $K \in \mathcal{T}_h$ and $a \in A_{h,k+1} \cap
\bar{K}$, let $\varphi_a^K$ be the Lagrange nodal basis in $K$, with
zero extension in $\mathcal{T}_h \setminus K$. Further, for any $F \in
\mathcal{F}_h^i$ and $a \in A_{h,k+1} \cap \bar{F}$, let $\psi_a^F$
be the dual basis of the degree $k+1$ Lagrange basis that  
$$
\langle \psi_{a'}^F, \varphi_a^K \rangle_F = \delta_{a,a'} \quad
\mbox{and} \quad \psi_{a'}^F| _{\cF_h \backslash F} = 0.
$$
For any $a\in A_{h,k+1}$, define the local spaces 
\begin{equation}\label{eq:local-subspace}
\begin{aligned}
\Sigma_{h,k+1,a}^{-1} &:= \mathrm{span} \{ \varphi_a^K T_{ij} ~|~ 1\le i
\le j \le n, \bar{K} \ni a\}, \\
M_{h,k+1,a} &:= {\rm span}\{ \psi_a^F e_i ~|~ 1\le i \le n, F\in
\mathcal{F}_h^i, \bar{F} \ni a\},
\end{aligned}
\end{equation}
where $\{e_i~|~1 \le i \le n\}$ is the basis of $\mathbb{R}^n$ and
$\{T_{ij}=\frac{1}{2}(e_ie_j^T + e_je_i^T)~|~1\le i \le j \le n\}$ is
the basis of $\mathbb{S}$. Clearly, 
$$
\Sigma_{h,k+1}^{-1} = \bigoplus_{a\in A_{h,k+1}} \Sigma_{h,k+1,a}^{-1}
\quad \text{and} \quad M_{h,k+1} = \bigoplus_{a\in A_{h,k+1}}
M_{h,k+1,a}.
$$
Moreover, if $a\neq a'$ and $\mu \in
\mathcal{C}(\Sigma_{h,k+1,a}^{-1}) \cap
\mathcal{C}(\Sigma_{h,k+1,a'}^{-1})$, then $\mu$ vanishes at all the
Lagrange nodes on the edges. This implies that $\mu = 0$, namely  
$$ 
\mathcal{C}(\Sigma_{h,k+1,a}^{-1}) \cap
\mathcal{C}(\Sigma_{h,k+1,a'}^{-1})
= \{0\} \qquad \text{if}~a \neq a'.
$$ 
Hence, we have 
\begin{equation} \label{eq:local-basis-RC}
\mathrm{R}(\mathcal{C}) = \mathcal{C}(\Sigma_{h,k+1}^{-1}) =
\bigoplus_{a\in A_{h,k+1}} \mathcal{C}(\Sigma_{h,k+1,a}^{-1}).
\end{equation} 
Therefore, $\mathrm{R}(\mathcal{C})$ has local basis since
$\mathcal{C}(\Sigma_{h,k+1,a}^{-1})$ is locally supported for any $a
\in A_{h,k+1}$.

Next, we construct the local basis for
$\mathrm{R}(\mathcal{C})^\perp$.  Let 
\begin{equation}\label{eq:singular-vertex-eq}
M_{h,k+1,a,{\perp}} := \left\{ \mu_a \in M_{h,k+1,a}~|~ \langle
\mu_a, [\btau_a]\rangle_{\cF^i_h} =0 \quad \forall
\btau_a\in \Sigma_{h,k+1,a}^{-1} \right\}. 
\end{equation}
If $a \neq a'$, we further have  
$$
\langle \mu, \mathcal{C}\btau \rangle_{\cF_h^i} = 0 \qquad \forall
\mu \in M_{h,k+1,a}, \btau \in \Sigma_{h,k+1,a'}^{-1}.
$$
Hence, we have $M_{h,k+1,a, {\perp}} \subset \mathrm{R}(\mathcal{C})
^{\perp}$ and 
$$
\mathrm{R}(\mathcal{C}) ^{\perp} = \bigoplus _{a\in A_{h,k}}
M_{h,k+1,a,{\perp}}.
$$
Therefore, the local basis $\{\psi_1, \psi_2,\cdots,\psi_{N_2}\}$ of
$\mathrm{R}(\mathcal{C}) ^{\perp}$ comes from the union of the basis
of $M_{h,k+1,a,{\perp}}$ for all $a\in A_{h,k}$.

In addition, the basis of $M_{h,k+1,a,\bot}$ can be computed locally
according to its definition \eqref{eq:singular-vertex-eq}. In
particular, $M_{h,k+1,a,\bot}$ is nontrivial for the 2D case only if
$a$ is an interior singular vertex. Thus, if there is no interior
singular vertex in $\mathcal{T}_h$, then
$\mathrm{R}(\mathcal{C})^\perp = \{0\}$, or $M_{h,k+1} =
\mathrm{R}(\mathcal{C}) = \bigoplus_{a\in A_{h,k+1}}
\mathcal{C}(\Sigma_{h,k+1,a}^{-1})$. Further, a direct calculation
shows that  
$$ 
\mathcal{C}(\Sigma_{h,k+1,a}^{-1}) = 
\begin{cases}
\mathrm{span}\{\varphi_a^Fe_i~|~ 1\leq i \leq 2, F\in \mathcal{F}_h^i,
  \bar{F} \ni a\} & a \in \bar{\cF}_h^i, \\
\{0\} & a \notin \bar{\cF}_h^i,
\end{cases}
$$ 
where $\varphi_a^F$ denotes the Lagrange nodal basis on $F$.
Therefore, we can choose a special basis of $\mathrm{R}(\mathcal{C})$
as 
\begin{equation} \label{eq:special-basis-M}
M_{h,k+1} = \bigoplus_{a \in \bar{F}_h^i \cap A_{h,k+1}}
\mathrm{span}\{\varphi_a^F e_i ~|~ 1 \leq i \leq 2, F\in \cF_h^i,
  \bar{F}\ni a\}. 
\end{equation}
The mass matrix under the special basis \eqref{eq:special-basis-M} is
the diagonal block matrix whose diagonal block entry is the local mass
matrix under the Lagrange nodal basis on $F$. Hence, the mass matrix
$\boldsymbol{M}$ is well-conditioned because the local
mass matrix is well conditioned for the Lagrange nodal basis, which
gives rise to \eqref{eq:basis-L2-stability}. This completes the proof.
\end{proof}

%% proof of Lemma local jump
\begin{proof}[Proof of Lemma \ref{lm:stability-local-jump}]
In light of \eqref{eq:local-basis-RC} in the proof of Lemma
\ref{lm:local-basis}, there exists a Lagrange node $a\in A_{h,k+1}$
such that $\varphi_i \in \mathcal{C}(\Sigma_{h,k+1,a}^{-1})$. Further,
we have 
$$ 
\varphi_i = \omega \varphi_a|_{\mathcal{F}_h}, 
$$ 
where $\varphi_a$ is the Lagrange nodal basis function at the node $a$
and $\omega \in L^2(\mathcal{F};\R^2)$ is piecewise constant and
$\mathrm{supp}(\omega) \subset \{F\in \mathcal{F}_h^i~|~ \bar{F}\ni a\}$.
Next, we construct $\btau_i\in \Sigma_{h,k+1}^{-1}$ case by case
according to the location of $a$. Clearly, if $a$ is not located on
the $\bar{\mathcal{F}}_h^i$, then $\mathcal{C}(\Sigma_{h,k+1,a}^{-1})
= \{0\}$. Hence, we only need to consider the following two cases:
Internal Lagrange node on $F\in \mathcal{F}_h^i$, or vertex of
$\cT_h$. We first state a useful tool for the analysis: For any given
vectors $v, w \in \mathbb{R}^2$, there exists $T\in \mS$ such that 
\begin{equation} \label{eq:symmetry-tool}
T v = w \quad \text{and} \quad \|T\|_{l^2} \le \sqrt{2}
\frac{\|w\|_{l^2}}{\|v\|_{l^2}}.
\end{equation}
A straightforward calculation shows that $T$ in
\eqref{eq:symmetry-tool} can be chosen as 
$$ 
T = \frac{w_1}{\|v\|_{l^2}^2} 
\begin{pmatrix}
v_1 & v_2 \\
v_2 & -v_1
\end{pmatrix} +
\frac{w_2}{\|v\|_{l^2}^2}
\begin{pmatrix}
-v_2 & v_1 \\
v_1 & v_2 
\end{pmatrix}.
$$ 

\begin{figure}[htbp!]
\centering\includegraphics[scale =.7]{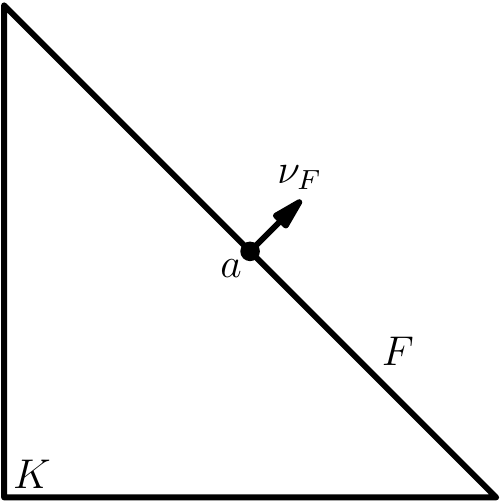}
\caption{Internal Lagrange node on edge $F$.}
\label{fig:nearly-singular1} 
\end{figure}
\paragraph{\emph{Case 1: Internal Lagrange node of $F\in
  \mathcal{F}_h^i$}} 
First, we select an element $K$ such that $F\in\bar{K}$ (cf. Figure
\ref{fig:nearly-singular1}). By virtue of \eqref{eq:symmetry-tool},
there exists $T\in \mS$ such that 
$$
T \nu_F = \omega|_F \quad \text{and} \quad \|T\|_{l^2} \lesssim
\|\omega|_F\|_{l^2}.
$$
From the definition of $\Sigma_{h,k+1,a}^{-1}$ in
\eqref{eq:local-subspace}, let $\btau_i = \varphi_a^K T \in
\Sigma_{h,k+1,a}^{-1}$. Then, 
$$
[\btau_i]_F = \varphi_i|_F \quad \forall F\in \cF_h \quad \text{and} \quad
\|\btau_i\|_{0}^2 =  \|\varphi_a\|_{0, K}^2 \|T\|_{l^2}^2\lesssim  h
\|\varphi_a\|_{0, F}^2 \|\omega|_F\|_{l^2}^2  =
h\|\varphi_i\|_{0}^2.
$$

\begin{figure}[htbp!]
\centering 
\captionsetup{justification=centering}
\subfloat[Internal vertex]{\centering 
   \includegraphics[scale=0.8]{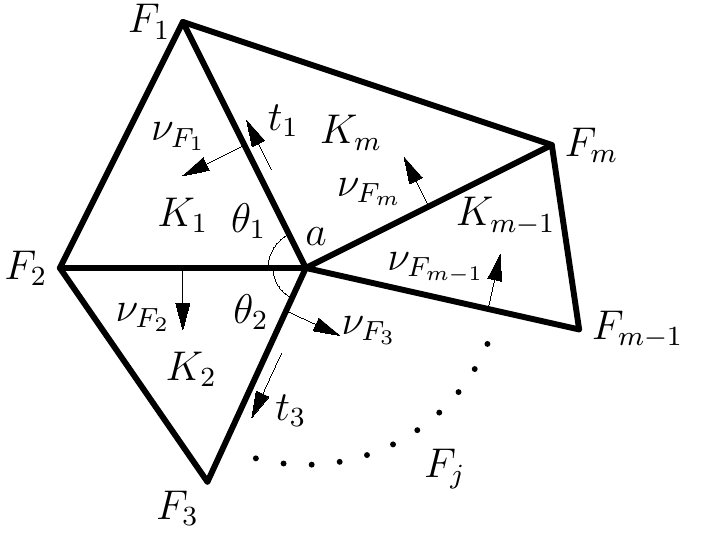} 
}% 
\quad 
\subfloat[Boundary vertex]{\centering 
   \includegraphics[scale=0.8]{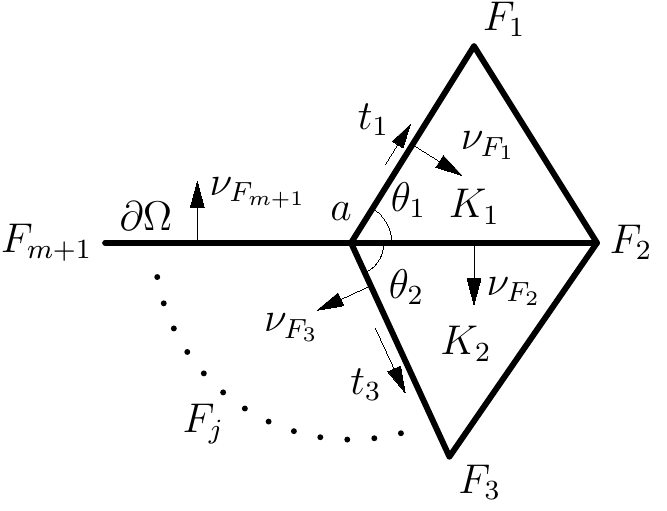} 
}
\caption{Vertex of $\mathcal{T}_h$.}
\label{fig:nearly-singular2}
\end{figure}

\paragraph{\emph{Case 2: Vertex of $\mathcal{T}_h$}} Suppose that there are
$m$ ($\ge2$) elements meeting at the vertex $a$. Since $\kappa \ge
\kappa_0>0$, there exist two adjacent elements (without loss of
generality, denoted by $K_1$ and $K_2$), such that the angles
$\theta_1$ and $\theta_2$ satisfying $|\theta_1+\theta_2-\pi|
\ge \kappa_0$, (cf. Figure \ref{fig:nearly-singular2}). The edges
that contain $a$ are denoted by $F_j$, $1\leq j \leq m$ if $a$ is an
internal vertex, and $1\leq j \leq m+1$ otherwise.  If $a$ is a
boundary vertex, we further set $F_1, F_{m+1} \in
\mathcal{F}_h^\partial$, which is feasible because
$\kappa(a) \geq \kappa_0 >0$. 

If $a$ is an internal vertex, let $F_{m+1}=F_1$ and $\nu_{F_{m+1}} =
\nu_{F_1}$. By virtue of \eqref{eq:symmetry-tool}, there exists
$T_m\in \mS$ such that 
\begin{equation} \label{eq:Tm}
T_m\nu_{F_{m+1}} = \omega|_{F_{m+1}} \quad\text{and}\quad
\|T_m\|_{l^2} \lesssim \|\omega|_{F_{m+1}}\|_{l^2}.
\end{equation}
Note that $T_m = \boldsymbol{0}\in \mS$ if $a$ is a boundary vertex.
Recursively for $j=m-1,m-2,\cdots, 2$, there exist $T_j \in \mS$ on
$K_j$ such that
\begin{equation} \label{eq:Tm-2}
T_j \nu_{F_{j+1}} =  \omega|_{F_{j+1}} + T_{j+1} \nu_{F_{j+1}}
\quad\text{and}\quad
\|T_j\|_{l^2} \lesssim   \|\omega|_{F_{j+1}}\|_{l^2} + \|T_{j+1}\|_{l^2}
\lesssim \sum_{s=j}^{m+1}  \|\omega|_{F_{s}}\|_{l^2}.
\end{equation}
Since $\omega|_{F_0} = 0$ if $a$ is a boundary vertex, we simply set
$T_1 = \boldsymbol{0}\in \mS$.

Next, we find two symmetric matrices $\tilde{T}_1 = c_1 t_1t_1^T$ and
$\tilde{T}_2 = c_2 t_3t_3^T$ on $K_1$ and $K_2$, respectively.  Here,
$t_1, t_3$ are the unit tangential vectors of $F_1$ and $F_3$,
respectively (cf. Figure \ref{fig:nearly-singular2}). The coefficients
$c_1,c_2$ are determined by 
\begin{equation} \label{eq:T-correction}
\tilde{T}_1\nu_{F_2} - \tilde{T}_2\nu_{F_2} = \omega|_{F_2} 
+ T_2 \nu_{F_2},
\end{equation}
i.e.
$$
-\big( t_1, t_3 \big)\begin{pmatrix} c_1\sin\theta_1\\
c_2\sin\theta_2 \end{pmatrix} = \omega|_{F_2} + T_2 \nu_{F_2}.
$$
Since $|\theta_1+\theta_2-\pi| \ge \kappa_0$, we have $|\det(t_1,t_3)| =
|t_1\times t_3| = |\sin(\theta_1 +\theta_2) | \ge \sin(\kappa_0)$. Thus,
the matrix $(t_1, t_3)$ is invertible. Moreover, we have $|
(t_1,t_3)^{-1} |_{\infty} \lesssim \sin^{-1}(\kappa_0) $ and, by the
shape regularity of grids, $|\sin\theta_1|$ and $|\sin\theta_2|$ are
bounded uniformly away from zero. Thus,
$$
\|\tilde{T}_1\|_{l^2}^2+\|\tilde{T}_1\|_{l^2}^2 \lesssim c_1^2+c_2^2
\lesssim \sin^{-2}(\kappa_0)  \| \omega|_{F_2} + T_2
\nu_{F_2} \|_{l^2}^2  \lesssim \sin^{-2}(\kappa_0)
\sum_{j=1}^{m+1}\|\omega|_{F_j}\|_{l^2}^2.
$$ 
In light of \eqref{eq:Tm}, \eqref{eq:Tm-2}, and
\eqref{eq:T-correction}, let 
\begin{equation} \label{eq:construction-btau-i}
\btau_i |_{K_j} =
\begin{cases}
\varphi_a^{K_j}(T_j + \tilde{T}_j) & j=1,2,\\
\varphi_a^{K_j} T_j & 3\leq j \leq m.
\end{cases}
\end{equation}
Then, we have
$$
[\btau_i]|_{F} = \varphi_i|_F \quad \forall F\in \cF_h,\quad \text{and}
\quad \|\btau_i\|_0^2 \lesssim h \sin^{-2}(\kappa_0)
  \|\varphi_i\|_{0}^2.
$$
This completes the proof.
\end{proof}

%% end of file %% 

%    Bibliographies can be prepared with BibTeX using amsplain,
%    amsalpha, or (for "historical" overviews) natbib style.
\bibliographystyle{amsplain}
%    Insert the bibliography data here.
\bibliography{2017hybrid}

\end{document}